\documentclass[11pt]{amsart}

\usepackage{amsmath,amsfonts}
\usepackage{amsmath,amsfonts,latexsym}
\usepackage{amscd,amssymb,amsmath}
\usepackage{amsfonts}
\usepackage{amsbsy}
\usepackage{epsfig,afterpage}
\usepackage{psfrag}
\usepackage{graphicx}
\usepackage{wasysym}
\usepackage[all,cmtip]{xy}
\usepackage{subfigure,tikz}
\usepackage{multimedia}

\usepackage{color}
\usepackage{xcolor}

 \topmargin=-0.5cm
\setlength{\textheight}{221mm}
 \setlength{\textwidth}{165mm}
 \evensidemargin=0.2cm \oddsidemargin=0.2cm

\newtheorem{lema}{Lemma}[section]
\newtheorem{teo}[lema]{Theorem}
\newtheorem{propo}[lema]{Proposition}
\newtheorem{rema}[lema]{Remark}
\newtheorem{coro}[lema]{Corollary}
\newtheorem{defi}[lema]{Definition}
\newtheorem{exem}[lema]{Example}
\newtheorem{conj}[lema]{Conjecture}
\newtheorem{prop}[lema]{Proposition}

\newcommand{\bea}{\begin{eqnarray*}}
\newcommand{\eea}{\end{eqnarray*}}
\newcommand{\zz}[1]{}

\newtheorem{theorem}[lema]{Theorem}
\newtheorem{proposition}[lema]{Proposition}

\newtheorem{lemma}[lema]{Lemma}

\newcommand{\bz}{\mathbb{Z}}

\newcommand{\bs}{\mathbb S}
\newcommand{\br}{\mathbb R}

\newcommand{\bc}{\mathbb C}

\newcommand{\bt}{\mathbb T}

\newcommand{\bq}{\mathbb Q}

\newcommand{\g}{\mathrm g}

\renewcommand{\AA}{\mathcal A}
\newcommand{\UU}{\mathcal U}

 \newcommand{\NN}{{\mathbb{N}}}
 \newcommand{\ZZ}{{\mathbb{Z}}}

\newcommand{\sct}{{\rm sct}}
\newcommand{\ct}{{\rm ct}}
\newcommand{\cat}{{\rm cat}}

\parindent 0pt
\parskip 2mm

\begin{document}

\title[Equivariant covering type]{Equivariant covering type and the number of vertices in equivariant triangulations}

\author[Dejan Govc]{Dejan Govc$^*$}

\author[Wac{\l}aw Marzantowicz]{Wac{\l}aw Marzantowicz$^{**}$}
\thanks{$^{**}$ Supported by the   Polish Committee of Scientific Research  Grant
Sheng 1 UMO-2018/30/Q/ST1/00228}
\address{$^{**}$ \;Faculty of Mathematics and Computer Science, Adam Mickiewicz University of
Pozna{\'n}, ul. Uniwersytetu Pozna{\'n}skiego 4, 61-614 Pozna{\'n}, Poland.}
\email{marzan@amu.edu.pl}

\author[Petar Pavesi\v{c}]{Petar Pave\v si\'{c}$^{***}$}
\thanks{$^*$, $^{***}$ Supported by the Slovenian Research Agency program P1-0292 and grant J1-4001.}
\address{$^*$, $^{***}$ Faculty of Mathematics and Physics, University of Ljubljana,
and the Institute of Mathematics, Physics, and Mechanics, Jadranska 19,  1000 Ljubljana, Slovenija}
\email{dejan.govc@gmail.com, petar.pavesic@fmf.uni-lj.si}

\subjclass[2010]{Primary 57M60;  Secondary 55M30, 57Q15, 57R05}
\keywords{covering type, $G$-action, equivariant,  Lusternik-Schnirelmann $G$-category, cup-length}

\begin{abstract}
We introduce the notion of the \emph{equivariant covering type} of a space $X$ on which a finite group 
$G$ acts, and study its properties. The equivariant covering type measures the size of $G$-equivariant
good covers of $X$ and is thus an extension  of the \emph{covering type} of a space, introduced by 
Karoubi and  Weibel. 
We show that the equivariant covering type is a $G$-homotopy invariant and describe its relation with
other $G$-invariants, like the equivariant LS-category, $G$-genus and  the multiplicative structures
of equivariant cohomology theories. We also compute the $G$-covering type of regular $G$-graphs, 
give estimates  for orientation-preserving actions on surfaces and for the projectivizations of complex
representations of $G$ and cohomology spheres. As an application, we derive  
estimates of sizes of minimal $G$-triangulations for various $G$-spaces. 
\end{abstract}

\maketitle

\section{Introduction} \label{sec:Introduction}

Given a topological space $X$ with an action of a finite group $G$, we consider $G$-equivariant 
good covers of $X$ and define the \emph{equivariant covering type}, $\ct_G(X)$ of  $X$ as the 
minimal size of such a cover for spaces that are $G$-homotopy equivalent to $X$ 
(see \ref{defi:sct & ct} for a precise definition).  The equivariant covering type extends the 
ordinary \emph{covering type}, $\ct(X)$ of $X$, which was introduced in 2016 by  Karoubi and 
Weibel \cite{K-W} and was later studied by the present authors in 
\cite{GovcMarzantowiczPavesic2019,GMP2,GovcMarzantowiczPavesic2022} and by Duan, Marzantowicz, 
Zhao in \cite{DMZ}. In spite of some formal similarities between $\ct_G(X)$ and $\ct(X)$, the 
intricacies of group actions lead to completely new phenomena that required introduction of 
new techniques from equivariant homotopy theory.

$G$-covering type can be viewed as a homotopy 
invariant measure of the complexity of the action of $G$ on $X$. It is closely related to other 
$G$-homotopy invariants like the $G$-equivariant Lusternik-Schinirelmann category and the 
$G$-genus of $X$. In this paper we develop the basic theory of the equivariant covering type. 
In particular we derive several lower bounds for $\ct_G(X)$ in terms of the equivariant LS-category, 
$G$-genus and the cup-length in various equivariant cohomology theories.

An important property of $\ct_G(X)$  is that it gives a lower bound for the number of orbits 
of vertices in any regular triangulation of $X$. In fact, classical theorems on the triangulability of 
$G$-actions on smooth manifolds, proved in the 1970's by Illman \cite{Illman, Illman2} and others
do not provide any information about the size of those triangulations. While for general polyhedra 
there exists a well-established study of minimal triangulations (mostly based on methods of 
combinatorial geometry - see surveys by Datta \cite{Datta} and Lutz \cite{Lutz}), as far as 
we know, there does not exist an analogous theory of minimal $G$-triangulations. 
Thus, as a second main contribution of this paper we initiate the study on minimal regular 
$G$-equivariant triangulations (cf. Bredon \cite[Def. III 1.2]{Bredon}). 

\

{\bf Main results.} 

- Proposition \ref{prop:estimate vertices by covering type} relating the equivariant covering type and the 
size of regular equivariant triangulations.

- Theorem \ref{teo: ct_G of graph}, in which we compute the $G$-covering type of a regular $G$-graph. 

- Theorem \ref{thm:surface minimal invariant triangulation}, in which we give upper and lower bounds for
  the $G$-covering type of an oriented closed surface $\Sigma_\g$ with respect to an 
  orientation-preserving action 
  of a finite group $G$. As a consequence we also obtain estimates of the sizes of equivariant triangulations 
  for a  regular, simplicial and orientation-preserving action of $G$ on $\Sigma_\g$.   The formula connects 
  the number of vertices of a minimal triangulation of $\Sigma^{\prime}_{\g^{\prime}}=\Sigma_\g/G$ 
  with the number of singular fibers of a branched cover $\pi: \Sigma_\g \to \Sigma^\prime_{\g^\prime}$. 

- Theorem \ref{thm:estimate by genus}, where we give a lower bound for the $G$-covering type $\ct_G(X)$
in terms of the equivariant genus $\gamma_G(X)$, which can be explicitly computed.

- Theorem \ref{thm:arithmetic}, where we give a lower estimate of $\ct_G(X)$ using the weighted
cohomological length of $h^*_G(X)$ for an arbitrary generalized  equivariant cohomology theory $h^*_G$.
As an application we obtain Theorem \ref{thm:estimate for projective space}, where we use 
equivariant $K$-theory to prove that $\ct_G(P(V)) \geq n^2$, where $P(V)$ is the projectivization 
of a complex $n$-dimensional representation $V$ of $G$.
Another interesting application is Theorem \ref{thm:ct of spheres for Z_p^k} in which we use the Borel
cohomology to estimate the equivariant covering type for actions of $G = (\bz_p)^k$ ($p$ a prime)
on an $n$-dimensional $\bz_p$-cohomology sphere $X$. If the action is free, then
$\ct_G(X)\geq \frac{(n+1)(n+2)}{2}$ for $p=2$, and $\ct_G(X)\geq \frac{(n+1)(n+3)}{8}$ for $p$ odd.
An estimate for the case where the action has fixed points is also provided.

{\bf Outline.}
In  Section \ref{sec:Equivariant LS category} we introduce the notation and recall some basic definitions
and results on the equivariant Lusternik-Schnirelmann category and the $G$-genus of $G$-spaces. 
In Section 
\ref{sec:Equivariant covering type} we discuss equivariant good covers, define the equivariant
covering type and show how it is related to the $G$-category and $G$-genus.
In Subsection \ref{$G$-covering type of one-dimensional complexes} we compute the equivariant 
covering type
of finite one-dimensional complexes (finite $G$-graphs). Section \ref{Equivariant covering type and
minimal triangulations of surfaces} is dedicated to the computation of the $G$-covering
type for orientation-preserving finite group actions on closed oriented  surfaces. In Section
\ref{Estimate of $G$-covering type by $G$-category and $G$-genus} we prove several 
lower estimates of $G$-covering type in terms of $G$-genus and $G$-category.  Finally, Section
\ref{sec:Cohomological estimates} is dedicated to estimates of the equivariant covering 
type of some classes of $G$-spaces based on the multiplicative structure of various 
generalized cohomology theories.

{\bf Thanks.}
The final form of this work  was shaped during the workshop "Some problems of Applied \& Computational
Topology" which was held 05.03.2023 - 11.03.2023 in the 
B{\c{e}}dlewo Conference Center. The authors would like to express their thanks to the authorities 
of Banach Center for giving them an opportunity to organize this meeting.

\section{Equivariant Lusternik-Schnirelman category} \label{sec:Equivariant LS category}

{\bf All groups considered in this work are finite.} Nevertheless, we will occasionally include
the finiteness assumption in the statements of the theorems to make them self-contained.  When discussing
concepts from equivariant topology and homotopy theory we will use freely the terminology and notation
from Bredon \cite{Bredon}.

The elementary (indecomposable) $G$-sets are the homogeneous sets of cosets $G/H$, where
$H\le G$ is an arbitrary subgroup. Given subgroups $H,K\le G$  and $g\in G$, the formula
$\varphi(gK):=gH$ gives a well-defined $G$-map $\varphi: G/K \to G/H$ between respective orbits if,
and only if, $gKg^{-1} \subseteq H$. As a consequence, the set of $G$-maps ${\rm Map}_G(G/K,G/H)$
can be identified with the set of invariants $(G/H)^K$.

A topological space $X$ with a continuous action of a group $G$ will be called a $G$-\emph{space}.
 Given $x\in X$ the set $Gx=\{gx\mid g\in G\}$ is the \emph{orbit} of $x$.
Every orbit $Gx$ can be  identified with the set of cosets $G/G_x$, where
$G_x=\{g\in G\mid gx=x\}$ is the \emph{stabilizer} of the action of $G$ at the point $x$.
A subset $A\subset X$ is $G$-\emph{invariant} if for every $g\in G$ we have  $gA=A$.
Clearly, the orbits are the minimal $G$-invariant subsets of $X$ and every invariant subset
of $X$ can be partitioned as a disjoint union of orbits.
More generally, for any $A\subset X$ we define the \emph{saturation}  $GA=\bigcup_{g\in G} gA$,
which is the smallest invariant subset of $X$ containing $A$.
An open cover $\mathcal{U}$ of $X$ is a \emph{$G$-cover} if $gU\in \mathcal{U}$ for every $g\in G$
and $U\in \mathcal{U}$. Every $G$-cover $\UU$ can be partitioned into orbits
$\widetilde{U}=\{gU\mid g\in G\}$ which are $G$-invariant subfamilies of $\UU$.

\zz{From now on by a $G$-cover  we understand an open $G$-invariant cover   $\mathcal{U} =
{\underset{s}{\, \cup\,}} \tilde{U} = G U_s$ split into orbits of open sets $U_s$ such that either
$g U_s = U_s$ or $gU_s \cap U_s = \emptyset$ for all $g\in G$.}

The classical Lusternik-Schnirelmann category $\cat(X)$ of a space $X$ is defined as the minimal
number of open categorical sets that are needed to cover $X$, where a subspace $U\subseteq X$ is
called \emph{categorical} if the inclusion $U\hookrightarrow X$ is homotopic to a constant map,
see Cornea, Lupton, Oprea, Tanre \cite{CLOT}. In the equivariant setting we require maps and
homotopies to be $G$-equivariant,
and so we need to replace points with $G$-orbits. Thus a $G$-invariant subset $U\subseteq X$ is
said to be $G$-\emph{categorical} if there is a $G$-equivariant homotopy between the inclusion
$U\hookrightarrow X$ and the inclusion of some orbit $Gx\hookrightarrow X$.
Note, that a $G$-categorical set that deforms to an orbit $Gx$ must have at least
$|Gx|=|G/G_x|$ components which are mapped to different points of $Gx$.
If there is an equivariant map from $Gx$ to $Gx'$, then a $G$-homotopy to $Gx$ can be extended
to a $G$-homotopy to $Gx'$. In particular, if $G$ has a fixed point $x_0$, then $Gx_0=\{x_0\}$ and
so every $G$-categorical set is a categorical set in the classical sense. This is why we normally
consider $G$-actions without fixed points, or alternatively, restrict the types of orbits
to which the invariant subsets of $X$ can be deformed.

Let $\mathcal{A} $ be some subset of the set $\{G/H\mid H\le G\}$. An open $G$-invariant
subset $U\subset X$ is said to be $G$-\emph{categorical} in $X$ with respect to  $\mathcal{A} $ if
$U\hookrightarrow X$ is $G$-homotopic to the inclusion $Gx\hookrightarrow X$ for some
$Gx\in\AA$.
For a  $G$-space $X$ we define the $G$-\emph{category} of $X$ with respect to $\AA$ as
the minimal cardinality (denoted $\cat_G^\AA(X)$) of a cover of $X$ by open sets that are
$G$-categorical with respect to $\AA$.
The most common choice for $\AA$ is the family of all non-trivial orbits $\AA = \{G/H\mid H\neq G\}$.
In that case we will write $\cat_G(X)$ instead of $\cat_G^\AA(X)$. Observe that if the action of
$G$ has fixed points, then $\cat_G(X)=\infty$, so we will often assume that the action of $G$ is
fixed point free.

The equivariant version of the Lusternik-Schnirelman category has been introduced by
Fadell \cite{Fadell} for free $G$-actions, and by Clapp and Puppe \cite{Clapp-Puppe} and
Marzantowicz \cite{Marzan} for the general case.  Standard references for $G$-category  are
Bartsch\ \cite{Bartsch} and Marzantowicz \cite{Marzan}. We will also need some specific
computations of the equivariant category for finite group actions on surfaces from
Gromadzki, Jezierski, Marzantowicz \cite{GroJezMar}.

There is another numerical invariant related to the $G$-category which is often easier
to compute. First of all, note that the join $X*Y$ of two $G$-spaces is again a $G$-space with respect to
the diagonal action defined as  $g(x,t,y):= (g x,t,g y)$. Given a set of orbits $\AA$,
we define the $G$-\emph{genus} of $X$ relative to $\AA$ (denoted $\gamma_G^\AA(X)$)
as the minimal integer $n$ for which there exist orbits $A_1,\ldots,A_n\in\AA$ and
a $G$-map $X\longrightarrow A_1*\cdots * A_n$. If no such $n$ exists we set
$\gamma^\AA_G(X)= \infty$. As before, if $\AA$ is the set of all non-trivial orbits of $G$,
then we abbreviate $\gamma^\AA_G(X)$ to $\gamma_G(X)$.

The main properties of the $G$-genus are summarized in the following propositions.

\begin{prop}\label{genus estimates category 1}\cite[Proposition 2.10]{Bartsch}\\
For any $G$-space X we have $\; \gamma^\AA_G(X)\, \leq \,cat^\AA_G(X)\,$.
\end{prop}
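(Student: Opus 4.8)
The plan is to derive a $G$-map into a join directly from a $G$-categorical cover, mimicking the classical argument that $\gamma\le\cat$ but inserting an averaging step to enforce equivariance. Suppose $\cat^\AA_G(X)=n$ and fix a cover $U_1,\dots,U_n$ of $X$ by open $G$-invariant sets, each $G$-categorical with respect to $\AA$. By definition, for each $i$ the inclusion $U_i\hookrightarrow X$ is $G$-homotopic to the inclusion of some orbit $A_i=Gx_i\in\AA$; restricting the target, the endpoint of this homotopy provides a $G$-map $\phi_i\colon U_i\to A_i$. These maps will serve as the coordinate maps of the join map I want to build.

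Next I would produce a $G$-invariant partition of unity $\{\rho_1,\dots,\rho_n\}$ subordinate to $\{U_i\}$. Starting from an ordinary partition of unity $\{\lambda_i\}$ subordinate to $\{U_i\}$ (which exists since $X$ is paracompact), I set $\rho_i(x):=\frac{1}{|G|}\sum_{g\in G}\lambda_i(g^{-1}x)$. Finiteness of $G$ makes this average well-defined and continuous, and a direct check shows that each $\rho_i$ is $G$-invariant, that $\sum_i\rho_i\equiv 1$, and that $\operatorname{supp}\rho_i\subseteq U_i$; here $G$-invariance of $U_i$ is exactly what keeps the support inside $U_i$ after averaging.

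With these ingredients I would define $\Phi\colon X\to A_1*\cdots*A_n$ by sending $x$ to the formal convex combination with barycentric coordinates $\rho_i(x)$ and join factors $\phi_i(x)\in A_i$. The map is well-defined because $\phi_i(x)$ only needs to be specified where $\rho_i(x)>0$, and there $x\in U_i$; continuity follows from the standard description of the topology on a join together with continuity of the $\rho_i$ and $\phi_i$. Equivariance is where the construction pays off: since each $\phi_i$ is a $G$-map and each $\rho_i$ is $G$-invariant, and the $G$-action on the join is the diagonal action $g(a_i,t_i)=(ga_i,t_i)$, we obtain $\Phi(gx)=g\Phi(x)$. Thus $\Phi$ exhibits a $G$-map from $X$ into a join of $n$ orbits from $\AA$, giving $\gamma^\AA_G(X)\le n=\cat^\AA_G(X)$.

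The main obstacle I anticipate is not the averaging, which is routine for finite $G$, but two points requiring care: extracting from the $G$-homotopy witnessing $G$-categoriality a genuine $G$-map $\phi_i\colon U_i\to A_i$ that lands in the intended orbit of $\AA$, and checking the well-definedness and continuity of $\Phi$ at points where some barycentric coordinates vanish. Both are manageable, but they are precisely the places where equivariance and continuity must be maintained simultaneously.
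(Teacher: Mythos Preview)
The paper does not supply its own proof of this proposition; it merely cites \cite[Proposition 2.10]{Bartsch}. Your argument is precisely the standard one found there: from a $G$-categorical cover of size $n$, extract $G$-maps $\phi_i\colon U_i\to A_i$ to the target orbits, average an ordinary partition of unity over the finite group to obtain a $G$-invariant one, and assemble these data into a $G$-map into the $n$-fold join. The reasoning is correct, and the two places you flag (landing $\phi_i$ in the orbit $A_i$, and continuity of $\Phi$ where some $\rho_i$ vanish) are routine.

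One small caveat: the proposition as stated in the paper reads ``for any $G$-space $X$'', but your construction of a subordinate partition of unity needs $X$ to be paracompact (or at least normal with a locally finite refinement available). In Bartsch's setting and in the paper's intended applications ($G$-CW complexes, compact manifolds) this is automatic, so it is a harmless standing hypothesis rather than a gap in the argument. You may also want to note explicitly that the case $\cat^\AA_G(X)=\infty$ is trivial.
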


\begin{prop}\label{genus estimates category 2} (\cite[Proposition 2.15]{Bartsch})
\begin{itemize}\label{properties of genus}
\item[]\hspace*{-12mm}{\emph{$G$-invariance}}: If $X$ and $Y$ are $G$-homotopy equivalent
spaces, then
$\gamma^\AA_G(X)= \gamma^\AA_G(Y)$.
\item[]\hspace*{-12mm}{\emph{Normalization}}: $\gamma^\AA_G(X) = 1$ if, and only if $X$ is $G$-
contractible relative to $\AA$.
\item[]\hspace*{-12mm}{{\emph{Monotonicity }}: If there exists a $G$-map $X\to Y$, then
$\gamma^\AA_G(X) \leq \gamma^\AA_G(Y)$.}
\item[]\hspace*{-12mm}{{\emph{Subadditivity}}:
$\gamma^\AA_G(X \cup Y ) \leq  \gamma^\AA_G(X) + \gamma^\AA_G(Y)$}
\end{itemize}
\end{prop}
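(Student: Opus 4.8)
The plan is to derive all four properties from the single definition of $\gamma^\AA_G(X)$ as the minimal length of a join of orbits of $\AA$ receiving a $G$-map from $X$, treating \emph{Monotonicity} as the engine from which \emph{$G$-invariance} and half of \emph{Normalization} follow for free, and isolating \emph{Subadditivity} as the one statement requiring a genuine construction.

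First I would prove \emph{Monotonicity}. Suppose a $G$-map $\phi\colon X\to Y$ exists and set $n=\gamma^\AA_G(Y)$; if $n=\infty$ there is nothing to show, so assume $n$ finite and choose orbits $A_1,\ldots,A_n\in\AA$ and a $G$-map $\psi\colon Y\to A_1*\cdots*A_n$ realizing the genus of $Y$. Then $\psi\circ\phi\colon X\to A_1*\cdots*A_n$ is again a $G$-map, whence $\gamma^\AA_G(X)\le n$. \emph{$G$-invariance} is then immediate: a $G$-homotopy equivalence supplies $G$-maps $X\to Y$ and $Y\to X$, and applying Monotonicity in both directions forces $\gamma^\AA_G(X)=\gamma^\AA_G(Y)$ (one does not even need the homotopies, only the existence of the two maps). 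For \emph{Normalization} I would unwind the definition in the case $n=1$: a join of a single orbit $A_1$ is just $A_1$, so $\gamma^\AA_G(X)=1$ means precisely that there is a $G$-map from $X$ to some orbit $A\in\AA$, which is what it means for $X$ to be $G$-contractible relative to $\AA$. The only remaining point is the base case $\gamma^\AA_G(X)\ge 1$ for nonempty $X$, which holds because the empty join receives no map from a nonempty space.

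The substantive step is \emph{Subadditivity}, and this is where I expect the main work to lie. Assume $m=\gamma^\AA_G(X)$ and $n=\gamma^\AA_G(Y)$ are both finite, with $G$-maps $f\colon X\to A_1*\cdots*A_m$ and $h\colon Y\to B_1*\cdots*B_n$ for orbits $A_i,B_j\in\AA$. Viewing the target $A_1*\cdots*A_m*B_1*\cdots*B_n$ as the join $(A_1*\cdots*A_m)*(B_1*\cdots*B_n)$, I would construct a $G$-map out of $X\cup Y$ by interpolating $f$ and $h$ along the join parameter. The key ingredient is a \emph{$G$-invariant} partition of unity $\{\alpha,\beta\}$ subordinate to the open cover $\{X,Y\}$ of $X\cup Y$, with $\alpha+\beta=1$, $\{\alpha>0\}\subseteq X$ and $\{\beta>0\}\subseteq Y$; since $G$ is finite one obtains it by averaging an ordinary subordinate partition of unity over the group. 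One then sets
\[
F(z)=\bigl(1-\beta(z)\bigr)\,f(z)\;\oplus\;\beta(z)\,h(z),
\]
where $\oplus$ records the join coordinate. This is well defined because $f(z)$ is used only when $\beta(z)<1$, which forces $\alpha(z)>0$ and hence $z\in X$, while $h(z)$ is used only when $\beta(z)>0$, forcing $z\in Y$; on the overlap $X\cap Y$ both values are defined and the collapsing relations of the join make the choice irrelevant. Equivariance of $F$ follows from the $G$-invariance of $\alpha,\beta$ and the equivariance of $f,h$, together with the diagonal $G$-action $g(x,t,y)=(gx,t,gy)$ on the join. Since $F$ lands in a join of $m+n$ orbits of $\AA$, we obtain $\gamma^\AA_G(X\cup Y)\le m+n$, the inequality being trivial when either genus is infinite.

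The point needing the most care is the well-definedness and continuity of $F$ at the locus where a join coordinate collapses: one must check that the averaged partition of unity is genuinely subordinate to $\{X,Y\}$ (so that each partially defined map is invoked only where it exists) and that $F$ is continuous across the strata $\beta=0$ and $\beta=1$. Everything else reduces to formal manipulation of joins and to repeated use of Monotonicity.
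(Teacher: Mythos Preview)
The paper does not supply its own proof of this proposition; it is simply quoted with a citation to \cite[Proposition 2.15]{Bartsch}. Your argument is the standard one and is essentially what one finds in the cited reference: Monotonicity by composition, $G$-invariance as an immediate corollary, Normalization read off from the $n=1$ case of the definition, and Subadditivity via the join-interpolation map built from a $G$-invariant partition of unity subordinate to $\{X,Y\}$. The only hypothesis worth making explicit is that $X$ and $Y$ are open $G$-invariant subsets of the ambient space (and that the latter is paracompact), so that an ordinary subordinate partition of unity exists before you average it over the finite group $G$; with that understood, your construction of $F$ is well defined, continuous across the strata $\beta=0$ and $\beta=1$, and equivariant for the diagonal action on the join, exactly as you claim. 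There is nothing in the present paper to compare your proof against.
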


\begin{prop}\label{upper estimate of genus}(\cite[Proposition 2.16]{Bartsch})\\
If $X$ is a compact $G$-space, then $\gamma_G(X) < \infty$. If, in addition, there exists
an orbit type $G/H\in\AA$, such that every orbit $Gx\subset X$ can be $G$-equivariantly mapped to
$G/H$,
then $$\gamma^\AA_G(X)\leq \dim X +1.$$
The latter assumption is satisfied if there exists one minimal orbit type in $\AA$.
\end{prop}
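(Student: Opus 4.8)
The plan is to reduce the genus to a purely covering-theoretic condition and then feed in the slice theorem together with the characterization of covering dimension by disjoint families. First I would record the reformulation that drives everything: for a $G$-space $X$ one has $\gamma^\AA_G(X)\le n$ if and only if $X$ admits a $G$-invariant open cover $U_1,\dots,U_n$ such that each $U_i$ carries a $G$-map $a_i\colon U_i\to A_i$ onto some orbit $A_i\in\AA$. Given such a cover, average an ordinary partition of unity over $G$ (possible since $G$ is finite and $X$ is normal) to obtain $G$-invariant functions $\phi_1,\dots,\phi_n$ with $\phi_i$ supported in $U_i$, and set $f(x)=\sum_i\phi_i(x)\,a_i(x)$, viewed as a point of the join $A_1*\cdots*A_n$; because the $\phi_i$ are $G$-invariant and the $a_i$ are $G$-maps, $f$ is a $G$-map. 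Conversely, given a $G$-map $X\to A_1*\cdots*A_n$, the loci where the $i$-th barycentric coordinate is positive are $G$-invariant open sets covering $X$, and coordinate projection gives the required $G$-maps onto the $A_i$.

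The finiteness assertion then follows immediately. By the slice theorem (Bredon, as $X$ is compact Hausdorff hence completely regular) each orbit $Gx\cong G/G_x$ has a $G$-invariant tube $T_x$ that $G$-retracts onto $Gx$; composing this retraction with the identification $Gx\cong G/G_x$ gives a $G$-map $T_x\to G/G_x$. Since $X$ is compact, finitely many tubes $T_{x_1},\dots,T_{x_m}$ cover $X$, each mapping to an orbit type occurring in $X$, so by the reformulation $\gamma_G(X)\le m<\infty$.

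For the dimension estimate I would first upgrade the local maps to a common target. By hypothesis there is $G/H\in\AA$ receiving a $G$-map from every orbit of $X$, so composing each tube retraction $T_x\to G/G_x$ with a chosen $G$-map $G/G_x\to G/H$ yields a $G$-map $T_x\to G/H$; thus $X$ is covered by $G$-invariant tubes, each admitting a $G$-map onto the single orbit $G/H$. To bring the number of cover elements down to $\dim X+1$ I pass to the orbit space. Each tube is $G$-saturated, hence equal to $\pi^{-1}$ of an open subset of $X/G$, and these images cover $X/G$; as $G$ is finite, the orbit map $\pi\colon X\to X/G$ is a closed finite-to-one surjection, so $\dim(X/G)\le\dim X=:d$. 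Covering dimension $\le d$ means this cover of $X/G$ has an open refinement that is a union of $d+1$ families $\mathcal W_0,\dots,\mathcal W_d$ of pairwise disjoint sets. Setting $V_j=\pi^{-1}\big(\bigcup\mathcal W_j\big)$ produces $d+1$ $G$-invariant open sets covering $X$; within $V_j$ the pieces $\pi^{-1}(W)$, $W\in\mathcal W_j$, are pairwise disjoint and each sits inside some tube, so their $G$-maps to $G/H$ assemble into a single $G$-map $a_j\colon V_j\to G/H$. By the reformulation, $\gamma^\AA_G(X)\le d+1=\dim X+1$.

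Finally, for the last sentence I would order orbit types by declaring $G/K\preceq G/H$ when there is a $G$-map $G/K\to G/H$, equivalently when $K$ is subconjugate to $H$; a minimal orbit type of $X$ lying in $\AA$ is then a smallest orbit $G/H$ whose isotropy $H$ dominates all the others, so every orbit of $X$ admits a $G$-map to it and the hypothesis of the estimate holds. I expect the genuine obstacle to be the equivariant dimension-reduction step, namely refining the tube cover into $d+1$ disjoint families while keeping $G$-invariance and the maps onto $G/H$; carrying this out through the orbit space, using $\dim(X/G)\le\dim X$ for finite $G$ and the disjoint-families description of covering dimension, is exactly what makes the bound sharp, with the slice theorem and the averaged partition of unity as the supporting tools.
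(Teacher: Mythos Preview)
The paper does not prove this proposition; it is simply quoted from Bartsch \cite[Proposition~2.16]{Bartsch} without argument. So there is no ``paper's own proof'' to compare against.

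Your argument is correct and is essentially the standard one: reformulate $\gamma^\AA_G(X)\le n$ as the existence of a $G$-invariant open cover $U_1,\dots,U_n$ with $G$-maps $U_i\to A_i\in\AA$ (via $G$-averaged partitions of unity in one direction and barycentric-coordinate supports in the other), obtain such a finite cover by tubes from the slice theorem and compactness for the finiteness claim, and for the dimension bound compose tube retractions with maps to the common target $G/H$, push the tube cover down to $X/G$, use $\dim(X/G)\le\dim X$ for finite $G$ together with the Ostrand-type characterization of covering dimension to refine into $d+1$ disjoint families, and pull back. This is presumably close to Bartsch's own proof.

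One small caution on the final sentence: watch the direction of your ordering. You define $G/K\preceq G/H$ when there is a $G$-map $G/K\to G/H$, i.e.\ when $K$ is subconjugate to $H$. The orbit you need---the one with isotropy $H$ \emph{containing} every $G_x$ up to conjugacy---is then \emph{maximal} in your $\preceq$, not minimal. Your description ``smallest orbit $G/H$ whose isotropy $H$ dominates all the others'' is mathematically the right object, but calling it a minimal orbit type in the ordering you just introduced is inconsistent. Different authors use opposite conventions here (ordering by isotropy vs.\ by orbit size), which is likely the source of the slip; just make the convention explicit and keep it consistent.
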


The $G$-category  shares only the $G$-homotopy invariance, normalization and subadditivity from the above
listed properties. The estimate of Proposition \ref{upper estimate of genus} takes a much more
complicated form in the case of $G$-category (cf. \cite{Bartsch}, \cite{Colman}, \cite{Marzan}), and
the upper bound is greater than $\dim X +1$ in general.

We conclude the section with a short proof that $G$-category is invariant with respect to $G$-homotopy
equivalences, as this is not mentioned in the cited references.

\begin{defi}\label{preserving AA}
We say that a $G$-map $\phi: X\to Y$ preserves the family of orbits $\AA= \{(G/H_s)\}$ if for every
$X \supset Gx \simeq G/G_x\in \AA$, we have $\phi(Gx) \simeq G/ G_{\phi(x)} \in \AA$.
\end{defi}

Note that the condition of Definition \ref{preserving AA} is satisfied if $X^G=Y^G=\emptyset$ and
$\AA=\{G/H\mid H\neq G\}$.

\begin{prop}\label{G-homotopy invariance of category}
Let $X$, $Y$ be $G$-spaces. If a $G$-homotopy equivalence between $X$ and $Y$ can be given by
$G$-maps $\phi\colon X\to Y$ and $\psi\colon Y\to X$ that preserve $\AA$, then $\cat_G^\AA(X)=\cat_G^\AA(Y)$.
\end{prop}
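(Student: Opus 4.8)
The plan is to mimic the classical proof that Lusternik--Schnirelmann category is a homotopy invariant, carried out equivariantly and with careful bookkeeping of orbit types. By symmetry of the hypotheses (both $\phi$ and $\psi$ preserve $\AA$), it suffices to establish one inequality, say $\cat_G^\AA(Y)\le\cat_G^\AA(X)$; the reverse then follows by interchanging the roles of $X$ and $Y$.

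First I would take a minimal cover $\{U_1,\dots,U_n\}$ of $X$ by open $G$-invariant sets that are $G$-categorical with respect to $\AA$, so that $n=\cat_G^\AA(X)$. For each $i$ the inclusion $U_i\hookrightarrow X$ is $G$-homotopic, via some $G$-homotopy $H_i$ with terminal map $c_i\colon U_i\to Gx_i$, to a map factoring through an orbit $Gx_i\in\AA$. I then pull the cover back along $\psi$, setting $V_i:=\psi^{-1}(U_i)$. Since $\psi$ is continuous and $G$-equivariant and each $U_i$ is open and $G$-invariant, each $V_i$ is open and $G$-invariant, and $\{V_1,\dots,V_n\}$ covers $Y$ because $\{U_i\}$ covers $X$.

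The heart of the argument is to show that each $V_i$ is $G$-categorical with respect to $\AA$ in $Y$, which I would do by concatenating two $G$-homotopies. Restricting the $G$-homotopy $\phi\psi\simeq_G \mathrm{id}_Y$ to $V_i$ deforms the inclusion $V_i\hookrightarrow Y$ to $\phi\circ(\psi|_{V_i})$, where $\psi|_{V_i}\colon V_i\to U_i$ by construction. Next, the assignment $(v,t)\mapsto\phi\bigl(H_i(\psi(v),t)\bigr)$ is a $G$-homotopy deforming $\phi\circ(\psi|_{V_i})$ to $\phi\circ c_i\circ(\psi|_{V_i})$. The composite homotopy is $G$-equivariant throughout and carries the inclusion $V_i\hookrightarrow Y$ to a map whose image lies in $\phi(Gx_i)$.

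It remains to identify $\phi(Gx_i)$ as an admissible orbit. Because $\phi$ preserves $\AA$ in the sense of Definition \ref{preserving AA} and $Gx_i\in\AA$, the image $\phi(Gx_i)=G\phi(x_i)\simeq G/G_{\phi(x_i)}$ again lies in $\AA$. Hence $V_i\hookrightarrow Y$ is $G$-homotopic to a map factoring through the orbit $\phi(Gx_i)\in\AA$, so $V_i$ is $G$-categorical with respect to $\AA$. This produces a cover of $Y$ by $n$ such sets, giving $\cat_G^\AA(Y)\le n=\cat_G^\AA(X)$, and the symmetric inequality completes the proof. I expect the only genuinely delicate point to be this last one: without the $\AA$-preservation hypothesis the deformed image orbit could have a strictly larger isotropy group and fall outside $\AA$, which is precisely why that hypothesis is indispensable. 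The remaining steps are the routine equivariant analogues of the nonequivariant deformation argument.
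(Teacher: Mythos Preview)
Your proof is correct and follows essentially the same approach as the paper: pull back a minimal $G$-categorical cover along one of the equivalences, compose the given categorical deformations with the other map, and invoke the $\AA$-preservation hypothesis to ensure the terminal orbit stays in $\AA$. The only cosmetic difference is that the paper proves $\cat_G^\AA(X)\le\cat_G^\AA(Y)$ first (pulling back along $\phi$ and pushing forward along $\psi$), whereas you prove the reverse inequality; by symmetry this is immaterial.
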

\begin{proof}
Let $\phi: {X\overset{G}{\,\to\,}} Y$ and correspondingly  $\psi: Y{\overset{G}{\,\to\,}} X$ be such a $G$-
maps that  $\psi \circ \phi {\overset{G}{\,\sim\,}} {\rm id}_X$ and $\phi \circ \psi {\overset{G}{\,\sim\,}}
{\rm id}_Y$ respectively. Furthermore, let $\tilde{V}_1, \, \dots, \tilde{V}_k$, $k=\cat_G^\AA(Y)$ be a
$G$-categorical cover of $Y$, and $r_{i,t}: \tilde{U}_i \times I \to Y$, $1\leq i \leq k$, with  $  r_{i,1}:
\tilde{U}_i \to  G/G_{y_i}$,  the corresponding $G$-maps. Then the $G$-cover $\tilde{U}_i:= \phi^{-1}(V)$ of
$X$ is $G$-categorical. Indeed, the composition $ \psi \circ r_{i,t} \circ \phi : \tilde{U}_i \times I \to   X
$ gives the required $G$-deformation, because $\psi \circ r_{i,1} \circ \phi : \tilde{U}_i  \to \psi(G/
G_{y_i}) \subset  X$, with $\psi(G/G_{y_i})\in \AA$, and $\psi \circ\  r_{i,0} \circ \phi : \tilde{U}_i  \to
X$ is $G$-homotopic to $\psi\circ\phi\colon \tilde{U}_i\to X$. But the latter is $G$-homotopic to the inclusion
of $\tilde{U}_i$ in $X$, which shows $\cat_G^\AA(X)\leq \cat_G^\AA(Y)$. The proof of the converse inequality is
analogous.
\end{proof}

\section{Equivariant covering type} \label{sec:Equivariant covering type}

The covering type of a space is a homotopy invariant that was recently
introduced by Karoubi and Weibel \cite{K-W}. It is based on open covers that are \emph{good}
in the sense that all finite, non-empty intersections of elements of the cover are contractible.
Then the \emph{covering type} $\ct(X)$ of $X$ is defined as the minimal cardinality of a
good cover of a space that is homotopy equivalent to $X$. The well-known Nerve Theorem
(see Hatcher \cite[Cor. 4G.3]{Hatcher}) states that for every good cover $\UU$ of a paracompact
space $X$ the geometric realization $|N(\UU)|$ of the nerve of the cover is homotopy equivalent
to $X$ (in other words, $N(\UU)$ is a \emph{homotopy triangulation} of $X$). Thus, we can say that
the covering type of a paracompact space $X$ equals the minimal
number of vertices in a homotopy triangulation of $X$.

In order to extend the above results to the equivariant context we need some preparation.
A $G$-space $X$ is said to be $G$-\emph{contractible}, if there exists
an $x\in X$, such that the orbit $Gx\subset X$ is a $G$-deformation retract of $X$.

The proof of the following proposition is straightforward.

\begin{prop}
Let $X$ be $G$-contractible to an orbit $Gx$. Then $X$ has precisely $|Gx|$ path-components and
each of them is contractible to a point in the orbit $Gx$.
\end{prop}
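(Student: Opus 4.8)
The plan is to unpack the definition of $G$-contractibility and reduce both claims to elementary facts about the group action on the deformation retract. By assumption there is a $G$-deformation retraction $r_t\colon X\to X$ with $r_0=\mathrm{id}_X$ and $r_1$ mapping $X$ onto the orbit $Gx$, where each $r_t$ is $G$-equivariant and $r_t$ restricts to the identity on $Gx$ for all $t$. The key structural observation is that the orbit $Gx$ is a finite set (since $G$ is finite) consisting of exactly $|Gx|=|G/G_x|$ distinct points, carrying the discrete topology as a subspace.

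\emph{Counting path-components.} First I would show $X$ has exactly $|Gx|$ path-components. Since $r_1\colon X\to Gx$ is a retraction and $Gx$ is discrete, $r_1$ is continuous and thus constant on each path-component of $X$; hence each path-component maps to a single point of $Gx$, giving at most $|Gx|$ components that meet $Gx$ nontrivially, and no path-component can avoid $Gx$ because $r_1$ lands in $Gx$. Conversely, the homotopy $r_t$ provides, for each point $y\in X$, a path $t\mapsto r_t(y)$ from $y$ to $r_1(y)\in Gx$, so every point of $X$ lies in the same path-component as its image in $Gx$; distinct points of the discrete orbit $Gx$ cannot be joined by a path within $Gx$, and the existence of the retraction $r_1$ shows they lie in distinct path-components of $X$ (a path between two of them would map under $r_1$ to a path in the discrete set $Gx$, forcing the endpoints to coincide). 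This yields a bijection between path-components of $X$ and points of $Gx$, so there are precisely $|Gx|$ of them.

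\emph{Contractibility of each component.} For the second claim, let $X_i$ denote the path-component containing the point $x_i\in Gx$. The homotopy $r_t$ sends $X_i$ into itself for each $t$: indeed $r_0=\mathrm{id}$ fixes $X_i$ setwise, and since $t\mapsto r_t(y)$ is a path starting in $X_i$, continuity forces $r_t(y)\in X_i$ for all $t$ whenever $y\in X_i$. Thus the restriction $r_t|_{X_i}$ is a homotopy from $\mathrm{id}_{X_i}$ to the constant map at $x_i$, exhibiting $X_i$ as contractible to the point $x_i\in Gx$, as required.

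The routine verifications above are genuinely elementary; the only point demanding care is the interplay between continuity and the \emph{discreteness} of the finite orbit $Gx$, which is what simultaneously forces path-components to be separated by the retraction and lets the equivariant homotopy descend to each component. I would make sure this discreteness is invoked explicitly, since it is the single ingredient that converts the abstract $G$-deformation retraction into the concrete statement about $|Gx|$ contractible pieces.
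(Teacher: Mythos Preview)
Your argument is correct; the paper itself omits the proof entirely, calling it ``straightforward,'' and your elementary unpacking via the discreteness of the finite orbit $Gx$ is exactly what is intended. One minor point of precision: you state that $r_t$ restricts to the identity on $Gx$ for all $t$ (the \emph{strong} deformation retract condition), but this is neither guaranteed by the paper's definition of $G$-contractibility nor actually used anywhere in your argument---only $r_0=\mathrm{id}_X$, the continuity of $(y,t)\mapsto r_t(y)$, and $r_1|_{Gx}=\mathrm{id}_{Gx}$ are needed, and these already follow from $Gx$ being a $G$-deformation retract.
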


Given a $G$-space $X$ we are interested in triangulations of $X$ that are compatible with the
group action. Recall that an abstract simplicial complex is determined by a set of vertices
$V$ and a set $K$ of finite non-empty subsets of $V$, called simplices, which is closed under
inclusion (i.e., a non-empty subset of a simplex in $K$ is also a simplex in $K$).
A simplicial map between two simplicial complexes is a map between the respective sets of
vertices that carries simplices into simplices.

\begin{defi}\label{defi:regular simplicial}(cf. Bredon \cite[Sec. III.1]{Bredon})
\begin{enumerate}
\item A \emph{simplicial $G$-complex} is a simplicial complex $K$ together with an action of $G$ on $K$ by simplicial maps.
\item A simplicial $G$-complex $K$ is \emph{regular} if the action of $G$ on $K$ satisfies the
following conditions:
\begin{itemize}
\item[R1)] If vertices $v$ and $gv$ belong to the same simplex in $K$, then $v=gv$.
\item[R2)] If $\langle v_0,\ldots,v_n\rangle$ is a simplex of $K$ and if for some choice
of $g_0,\ldots,g_n\in  G$ the points $g_0 v_0,\ldots , g_n v_n$ also span a simplex of $K$, then
there exist $g\in G$, such that $gv_i= g_i v_i$, for $i =0,\ldots\, n$ (in other words,
$\langle g_0 v_0,\ldots , g_n v_n\rangle=g\langle v_0,\ldots,v_n\rangle$).
\end{itemize}
\end{enumerate}
\end{defi}

We will also need a slightly stronger regularity condition, introduced by Illman.

\begin{defi}\label{defi:strict regular simplicial}(cf. Illman \cite[p. 201]{Illman})
A simplicial  $G$-complex  $K$ is  an \emph{equivariant simplicial complex} or a \emph{strictly regular $G$-complex} if
it satisfies \emph{R1)} and the following condition:
\begin{itemize}
\item[R3)] {For any $n$-simplex $\sigma$ of $ K$  the vertices $v_0, \,\dots, \, v_n$ of $K$ can be ordered in such
a way that we have $G_{v_n} \subset G_{v_{n-1}} \subset \,  \cdots\,  \subset G_{v_0}$.}
\end{itemize}
\end{defi}

By conditions R2) or R3), if two $n$-simplices in $K$ have vertices from the same set of orbits, then
they belong to an orbit of the action of $G$ on $K$. Thus, if $K$ is a regular $G$-complex, then
one can naturally build a quotient simplicial complex $K/G$ whose vertices are the orbits of
the action of $G$ on the vertices of $K$, and whose simplices are the orbits of the action of $G$
on the simplices of $K$. Clearly, the geometric realization $|K/G|$ of the quotient complex
is homeomorphic to the quotient space $|K|/G$.

The regularity conditions are quite stringent. For example, none of them hold for the $\ZZ_3$-action
that rotates the 2-simplex around its center. The following propositions shows that the situation improves
if we consider barycentric subdivisions.

\begin{prop}(\cite[Prop. III.1.1]{Bredon})
If $K$ is any simplicial G-complex, then the induced action on the barycentric subdivision $K'$
satisifies condition \emph{R1)}. Moreover, if the action of $G$ on $K$ satisfies \emph{R1)}, then the induced action
on $K'$ satisfies \emph{R2)} and \emph{R3)}.

Therefore, any simplicial action of $G$ on $K$ induces a strictly regular action,
on the second barycentric subdivision of $K$.
\end{prop}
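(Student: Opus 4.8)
The plan is to work directly with the combinatorial description of the barycentric subdivision $K'$: its vertices are the simplices of $K$ (I write $\hat\sigma$ for the vertex corresponding to a simplex $\sigma$), and its simplices are the flags $\sigma_0\subsetneq\sigma_1\subsetneq\cdots\subsetneq\sigma_n$ of simplices of $K$ ordered by inclusion. The action of $G$ on $K'$ is $g\cdot\hat\sigma=\widehat{g\sigma}$, which is simplicial because $g$ preserves the inclusion relation, and it preserves the dimension of each simplex. The single fact that $g$ preserves dimension already yields R1 for $K'$ with no hypothesis on $K$: if $\hat\sigma$ and $g\hat\sigma=\widehat{g\sigma}$ lie in a common flag, then $\sigma$ and $g\sigma$ are comparable under inclusion; since $\dim(g\sigma)=\dim\sigma$, two comparable simplices of equal dimension must coincide, so $\sigma=g\sigma$ and hence $\hat\sigma=g\hat\sigma$. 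This disposes of the first assertion.

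For the remaining claims I would assume R1 on $K$ and first extract its key consequence: the setwise stabilizer of a simplex coincides with its pointwise stabilizer. Indeed, if $g\sigma=\sigma$, then for each vertex $v\in\sigma$ the vertices $v$ and $gv$ both lie in $\sigma$, so R1 forces $v=gv$; thus $G_\sigma=\bigcap_{v\in\sigma}G_v$. Since the stabilizer of the vertex $\hat\sigma$ of $K'$ is exactly this setwise stabilizer $G_\sigma$, the description gives R3 immediately: for a flag $\sigma_0\subsetneq\cdots\subsetneq\sigma_n$ the inclusions of vertex sets give $G_{\sigma_0}\supseteq G_{\sigma_1}\supseteq\cdots\supseteq G_{\sigma_n}$, so ordering the vertices of the corresponding simplex of $K'$ as $\hat\sigma_0,\ldots,\hat\sigma_n$ (smallest simplex first) produces the nested chain of stabilizers required by R3.

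The main work, and the step I expect to be the real obstacle, is R2. Here I am given a flag $\sigma_0\subsetneq\cdots\subsetneq\sigma_n$ together with elements $g_0,\ldots,g_n\in G$ for which $\widehat{g_0\sigma_0},\ldots,\widehat{g_n\sigma_n}$ again span a simplex of $K'$, and I must produce a \emph{single} $g$ with $g\sigma_i=g_i\sigma_i$ for all $i$. Because dimensions are preserved and strictly increasing along the flag, the image simplices are automatically ordered as $g_0\sigma_0\subsetneq\cdots\subsetneq g_n\sigma_n$; in particular $g_i\sigma_i\subseteq g_n\sigma_n$. The idea is to test $g=g_n$. Setting $h_i:=g_n^{-1}g_i$ and applying $g_n^{-1}$, one gets $h_i\sigma_i\subseteq\sigma_n$, while also $\sigma_i\subseteq\sigma_n$; thus for any vertex $v$ of $\sigma_i$, both $v$ and $h_iv$ are vertices of the single simplex $\sigma_n$, and R1 for $K$ forces $h_iv=v$. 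Hence $h_i$ fixes $\sigma_i$ pointwise, so $g_n\sigma_i=g_i\sigma_i$, and $g=g_n$ works. The crux is recognizing that the entire image flag lives inside $g_n\sigma_n$, so that translating back by $g_n^{-1}$ brings everything into the one simplex $\sigma_n$, where R1 can be applied vertex by vertex.

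Finally, to obtain the concluding statement I would iterate: for an arbitrary simplicial $G$-complex $K$ the first subdivision $K'$ satisfies R1 by the first part, and then, applying the second part to $K'$ (which now satisfies R1), the second subdivision $K''=(K')'$ satisfies R2 and R3, i.e. it is strictly regular.
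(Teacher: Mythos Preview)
Your argument is correct. The paper does not supply its own proof of this proposition; it is quoted from Bredon \cite[Prop.~III.1.1]{Bredon} without argument, so there is nothing to compare against beyond noting that your proof is essentially the standard one found in Bredon: the dimension-preserving observation for R1, the identification of $G_{\hat\sigma}$ with the pointwise stabilizer of $\sigma$ under R1, and the ``translate the whole flag back into $\sigma_n$ and apply R1 vertexwise'' trick for R2 are exactly the ingredients of that proof.
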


Thus, we can always achieve regularity of a group action by increasing the number of simplices, but here
our goal is to estimate what is the minimal size of a triangulation for which a given group action is regular.
To this end we need a relation between simplicial $G$-complexes and $G$-covers. The following
notions were introduced by Yang \cite{Yang}.

\begin{defi}\label{defi:regular cover}
An open $G$-cover $\mathcal{U}$ of a $G$-space $X$ is \emph{regular} if the following conditions
hold:
\begin{itemize}
\item[RC1)] For every $U \in \UU$ and $g\in G$, either $U=gU$ or $U\cap gU=\emptyset$
\item[RC2)] If $U_0,\ldots, U_n$ are elements of $\UU$ with non-empty intersection and if for some
choice of elements $g_0,\ldots,g_n\in  G$ the intersection of sets
$g_0U_0,\ldots, g_nU_n$ is also non-empty, then there exists $g\in g$ such that
$gU_i=g_iU_i$ for $i \leq n$.
\end{itemize}
In short, $\UU$ is a regular $G$-cover if its nerve $N(\UU)$ is a regular $G$-complex.
\end{defi}

Let $\mathcal{U} =\{U_\alpha\}_{\alpha \in I} $ be an open $G$-cover of $G$-space $X$. For any subgroup
$H \subset G$ and $\alpha \in I$, let $U^H_\alpha = U_\alpha \cap X^H$. Denote by $\mathcal{U}^H$
the collection of $\{U^H_\alpha\}_{\alpha \in I}$. It is clear that $\mathcal{U}^H$ is an open cover of $X^H$.
The following definition is a natural extension of the concept of a good cover to the equivariant setting.

\begin{defi}\label{defi:G-good cover II}
A  regular open $G$-cover $\mathcal{U}$ (split into orbits $\tilde{U}= GU$)  is said to
be a \emph{good $G$-cover} if  all orbits $\tilde{U}$ of   elements  of $\mathcal{U}$  and all their
non-empty finite intersections are $G$-contractible.
\end{defi}

\begin{rema}\label{rem: good cover induces good of orbit}
It follows immediately from the definition of a good $G$-cover $\mathcal{U}$ that the set of
images $\mathcal{U}^*:=\{\pi(U)\mid U\in\UU\}$ with respect to the projection $\pi\colon X\to X/G$
forms a good cover of the orbit space $X^*=X/G$.
\end{rema}

Yang \cite{Yang} introduced another definition of good $G$-covers which
may look less natural, but is more convenient if one wants to state
and prove an equivariant version of the Nerve theorem.
In the next proposition we show that the two approaches are in fact
equivalent.

\begin{prop}\label{prop:comparison of Definitions}
A $G$-cover $\mathcal{U} =\{U_s\}$ (split into orbits $\tilde{U}_{i\in I}$) is a good $G$-cover of a $G$-manifold  $X$ if,
and only if it is
 a regular $G$-cover  and $\mathcal{U}^H$ is a good cover of $X^H$ for all subgroups $ H\subset G$.
\end{prop}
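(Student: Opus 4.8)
The plan is to observe first that both descriptions presuppose that $\mathcal{U}$ is a regular $G$-cover, so the regularity clause needs no argument and the whole content of the statement is the equivalence, for a fixed regular $G$-cover $\mathcal{U}$, between (i) every orbit $\tilde U=GU$ and every nonempty finite intersection of such orbits is $G$-contractible, and (ii) $\mathcal{U}^H$ is a good cover of $X^H$ for every subgroup $H\le G$. I would prove this one orbit-intersection at a time: fix orbits $\tilde U^{(0)},\dots,\tilde U^{(n)}$ with $\tilde W:=\tilde U^{(0)}\cap\dots\cap\tilde U^{(n)}\neq\emptyset$ and compare the equivariant homotopy type of $\tilde W$ with the ordinary homotopy types of its fixed-point sets $\tilde W^H$.

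Second, I would use regularity to give $\tilde W$ an explicit equivariant description. By RC1 the distinct translates of a given $U$ inside its orbit are pairwise disjoint, so each point of $\tilde W$ lies in a unique \emph{piece} $V=g_0U_0\cap\dots\cap g_nU_n$; hence $\tilde W$ is the disjoint union of its pieces, and the pieces are exactly the nonempty finite intersections of members of $\mathcal{U}$, i.e.\ the simplices of the nerve $N(\mathcal{U})$. By RC2 the group $G$ permutes these nonempty pieces transitively, so choosing one piece $V$ with set-stabilizer $G_V=\{g\in G\mid gV=V\}$ yields a $G$-homeomorphism $\tilde W\cong G\times_{G_V}V$, the $G$-space induced from the $G_V$-space $V$. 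The fixed-point sets are then given by the standard formula $\tilde W^H=\bigsqcup g\,V^{\,g^{-1}Hg}$, the union taken over cosets $gG_V$ with $g^{-1}Hg\le G_V$; since $V^{L}\neq\emptyset$ forces $L\le G_V$, the nonempty sets among the $V'^{\,H}$ (with $V'$ a piece) are precisely the simplices of the nerve $N(\mathcal{U}^H)$ of the restricted cover of $X^H$.

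The heart of the proof is the identity: $\tilde W$ is $G$-contractible if and only if $V^{L}$ is empty or contractible for every subgroup $L\le G_V$. For the direction (i)$\Rightarrow$(ii), if $\tilde W$ $G$-deformation retracts onto an orbit $Gy$, then $\tilde W^H\simeq(Gy)^H$ is discrete for every $H$, and the retraction restricts on each summand of $\tilde W^H$ to a deformation onto a single point; reading this off shows every nonempty $V^{L}$ is contractible, which by the identification of the second step is exactly the goodness of each $\mathcal{U}^H$. For the direction (ii)$\Rightarrow$(i) I would invoke the equivariant Whitehead theorem (Bredon \cite{Bredon}): the case $L=\{e\}$ gives that $V$ is contractible, while the hypotheses on all $L\le G_V$ say precisely that the constant $G_V$-map $V\to G_V/G_V$ induces a homotopy equivalence on every $L$-fixed-point set, hence is a $G_V$-homotopy equivalence; thus $V$ is $G_V$-contractible and, inducing up, $\tilde W\cong G\times_{G_V}V$ is $G$-contractible. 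Both implications then follow formally from the decomposition of the second step.

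The step I expect to be the main obstacle is this central equivalence, and specifically the requirement that the $G$-contraction be onto an orbit compatible with the combinatorial (piece) structure. A connected contractible $V$ can be $G_V$-contractible only to a single $G_V$-fixed point, so one must know that $V^{G_V}$ is nonempty and contractible; equivalently, one must read the good-cover hypothesis at the \emph{full} isotropy group $L=G_V$, not merely at its proper subgroups, so that each piece is connected and the retraction carries each cover element to a single point rather than to a larger (possibly free) orbit. It is here that the regularity of $\mathcal{U}$ and the convention that the members and intersections of a good cover are contractible are essential: they control the isotropy and exclude the exotic possibility of a contractible piece carrying a fixed-point-free $G_V$-action. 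I would therefore isolate and prove the equivariant-contractibility/fixed-point characterization as a standalone lemma (applying the equivariant Whitehead theorem in the $G$-CW, here simplicial, setting), after which the remainder of the proof is bookkeeping with the induced-space decomposition $\tilde W\cong G\times_{G_V}V$ and the identification of the pieces' fixed-point sets with the simplices of $N(\mathcal{U}^H)$.
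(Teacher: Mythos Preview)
Your forward direction is essentially the paper's: both restrict the $G$-deformation retraction $\tilde W\to Gy$ to $H$-fixed points and read off that each nonempty piece $V^L$ contracts to a point. Your decomposition $\tilde W\cong G\times_{G_V}V$ from RC1--RC2 is more explicit than what the paper writes, but the content is the same.

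The backward direction is where you and the paper diverge. The paper does \emph{not} run an equivariant Whitehead argument. It simply invokes Yang \cite[Corollary 2.12]{Yang}, which already gives the strong structural conclusion that each orbit-intersection $\tilde W$ is $G$-homeomorphic to a \emph{product} $G/H\times D$ with $D$ contractible and $G$ acting only on the $G/H$ factor; one then lifts the contraction of $D$ along the projection to $X/G$ (there is only one orbit type over $D$). In your language, Yang's result says the action of $G_V$ on the piece $V$ is essentially trivial, so the fixed-point issue you worry about never arises. The paper thus outsources to Yang precisely the step you flag as the obstacle.

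Your route via equivariant Whitehead is natural and would make the argument self-contained, but the gap you identify is genuine and is not closed by the hypotheses as stated. Goodness of $\mathcal U^H$ only constrains \emph{nonempty} intersections, so nothing in ``$\mathcal U^H$ is good for all $H$'' forces $V^{G_V}\neq\emptyset$; and since $V$ is connected, any $G_V$-contraction must land in a single $G_V$-fixed point. For $G_V$ a $p$-group Smith theory rescues you (a $p$-group acting on a mod-$p$ acyclic finite-dimensional complex has nonempty fixed set), but for general finite groups Oliver's results show that contractible complexes can support fixed-point-free actions, so some additional structural input is unavoidable. Your proposed ``standalone lemma'' would therefore have to import exactly the kind of slice/product argument that Yang proves; absent that, the cleanest fix is to do as the paper does and cite Yang for the $G/H\times D$ form, after which your induced-space bookkeeping finishes the proof.
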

\begin{proof}
$\mathcal{U} =\{U_s\}$, split into orbits $\tilde{U}_{i\in I}$ be a good $G$-cover of $X$ in the sense of
Definition \ref{defi:G-good cover II}, and $r: \tilde{U}_{i_1} \cap \, \cdots\, \cap \tilde{U}_{i_k} \to Gx$ a
$G$-deformation retraction. Then the restriction $ r^H$  of $r$ to
$(\tilde{U}_{i_1} \cap \, \cdots\, \cap \tilde{U}_{i_k})^H$ is a deformation retract of each intersection
of  $(U_{i_1} \cap \, \cdots\, \cap U_{i_k})^H$  onto the unique  point $x\in (Gx)^H $ which is in
$U_{i_1} \cap \, \cdots\, \cap U_{i_k}$. This shows that all sets $(U_{i_1} \cap \, \cdots\, \cap U_{i_k})^H$
are contractible which shows that this $G$-cover satisfies the stated condition.

 To show the converse implication we use an adaptation of the  argument of proof of Corollary \cite[2.12]{Yang}. Let  $\mathcal{U} =\{U_{s\in S}\}$  split into orbits $\{\tilde{U}_{i\in I}\}$, $ \tilde{U}_i = {\underset{g \in G}\cup}\, g U_{s}$, be a $G$-cover of $X$ such that  $ \mathcal{U}^H :=\{U^H_{s\in S}\}$ is  a good cover  $X^H$ for all $H\le G$. Since $\mathcal{U} =\{U_{s\in S}\}$ is regular $G$-cover, $g\,U_s \cap g^\prime\,U_s \neq \emptyset$ implies $g\,U_s = g^\prime\,U_s $. A nonempty intersection

 $$ \tilde{W} = \tilde{U}_{i_1} \cap  \, \cdots \,\cap \, \tilde{U}_{i_n}$$
 is of the form
 \begin{equation}\label{different definition equation}
 {\underset{i_1, \, \dots\, \i_n }{\, \bigcup\,}}\; g_{i_1} U_{i_1}\cap \,\cdots\,\cap \, g_{i_n} U_{i_n}
 \end{equation}
 If the isotropy group $G_{g_{i_k} U_{i_k}}= H_{i_k}$ then the isotropy group of $ g_{i_1} U_{i_1}\cap \,\cdots\,\cap \, g_{i_n} U_{i_n}$ is $ H= {\underset{1}{\overset{n}\cap}}\,  H_{i_k}$, i.e.
  $ g_{i_1} U_{i_1}\cap \,\cdots\,\cap \, g_{i_n} U_{i_n}=  ( g_{i_1} U_{i_1}\cap \,\cdots\,\cap \, g_{i_n} U_{i_n})^H$.
 Since  every $U_{i_k}$, thus $ g_{i_k} U_{i_k}$, is contractible, every nonempty summand of $\tilde{W}$  in   (\ref{different definition equation}) is contractible by our assumption applied for $H$.

 Since $\tilde{W}$ is nonempty, there exists nonempty contractible space $D=  g_{i_1} U_{i_1}\cap \,\cdots\,
 \cap g_{i_n} U_{i_n}$. If $D^\prime = g^\prime_{i_1} U_{i_1}\cap \,\cdots\, \cap g^\prime_{i_n} U_{i_n} $ is 
 also nonempty, then
 the condition (RC2)  of Definition \ref{defi:regular cover} implies there exists $h\in $G  such that
  $g_k^\prime U_{i_k} = h(g_k U_{i_k})$ for $k= 1, 2,\, \dots, n$. Hence $D^\prime = hD$.

  Since $\tilde{W} $ is $G$-invariant, $ W$ has the form $G/H \times D$, where $D$ is contractible and the action of $G$ is on the first factor.

     Let $r_t: D\times [0,1] \to D$ be the deformation of $D$ to a point $[x]= r_1(D)$, where we identify   $D$ with its image $\pi(D)\subset X/G$. Since over $D\subset X/G$ there is only one orbit type, the deformation  $r_t: D\times [0,1] \to D$ lifts to a $G$-deformation  $\tilde{r}_t: \tilde{U}_{i_1} \cap \,\cdots\, \cap \tilde{U}_{i_k} \times [0,1] \to \tilde{U}_{i_1} \cap \,\cdots\, \cap \tilde{U}_{i_k}$.   This shows that this intersection  is $G$-contractible to an orbit $ Gx\simeq G/H =r_1(\tilde{U}_{i_1} \cap \,\cdots\, \cap \tilde{U}_{i_k})$.  Consequently $\mathcal{U}$  satisfies the condition of Definition \ref{defi:G-good cover II}.

The above implication can be also shown by another argument. For a $G$-cover of $X$ $\mathcal{U} =\{U_{s\in S}\}$,  split into orbits $\{\tilde{U}_{i\in I}\}$, $ \tilde{U}_i = {\underset{g \in G}\cup}\, g U_{s}$,   such that  $ \mathcal{U}^H :=\{U^H_{s\in S}\}$ is  a good cover of  $X^H$ for all $H\le G$ we take the geometric realization of its nerve  $\mathcal{N}(\mathcal{U})$.  Under this assumption the Alexandrov map $\phi: X \to  |\mathcal{N}(\mathcal{U})|$ is $G$-homotopy equivalence as follows from Theorem \ref{thm:G-nerve homotopy equivalence}. Then the $G$-cover $\tilde{W}$ of $|\mathcal{N}(\mathcal{U})|$, consisted of saturations of the stars  of vertices of  the nerve $\mathcal{N}(\mathcal{U})$, satisfies the condition of Definition \ref{defi:G-good cover II} as follows from  Corollary \cite[2.12]{Yang} and the above argument. Consequently  $\tilde{\mathcal{U}}^\prime = \phi^{-1} (\tilde{W})$ gives a  regular good $G$-cover of $X$  of the same cardinality as $\tilde{U}$ which satisfies Definition \ref{defi:G-good cover II}.
\end{proof}

Theorem 2.11 of Yang \cite{Yang} states that  every smooth $G$-manifold has a good $G$-cover.
The equivariant good covers are in fact co-final in the set of open covers of a $G$-manifold $X$.
Moreover, Yang   \cite[Theorem 2.19]{Yang} proved the following equivariant  version of the Nerve theorem.

\begin{teo}\label{thm:G-nerve homotopy equivalence}
If $\mathcal{U}$ is a locally finite, e.g. finite,  equivariant good cover of a $G$-CW complex $X$,
then the usual realization $|\mathcal{N}(\mathcal{U})|$ of the nerve $\mathcal{N}(\mathcal{U})$ is
$G$-homotopy equivalent to $X$.
\end{teo}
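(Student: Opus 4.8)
The plan is to deduce the equivariant statement from the classical Nerve theorem by passing to fixed-point sets and invoking the equivariant Whitehead theorem. Recall that a $G$-map $f$ between $G$-CW complexes is a $G$-homotopy equivalence precisely when the induced maps $f^H$ on $H$-fixed points are ordinary homotopy equivalences for every subgroup $H \le G$. Since $X$ is a $G$-CW complex and $|\mathcal{N}(\mathcal{U})|$ is a $G$-CW complex (the action of $G$ on $\mathcal{N}(\mathcal{U})$ permutes vertices, and by RC1 it is regular, so its realization is a $G$-CW complex), it suffices to produce a single $G$-map $f\colon X \to |\mathcal{N}(\mathcal{U})|$ and to verify that it becomes a homotopy equivalence after taking $H$-fixed points for each $H$.

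First I would construct the map. Because $G$ is finite and $\mathcal{U}$ is locally finite, one can choose a partition of unity $\{\phi_U\}_{U\in\mathcal{U}}$ subordinate to $\mathcal{U}$ that is $G$-invariant in the sense that $\phi_{gU}(gx)=\phi_U(x)$ (average an arbitrary subordinate partition of unity over $G$ and re-index). The associated nerve map $f\colon X\to |\mathcal{N}(\mathcal{U})|$, $f(x)=\sum_U \phi_U(x)\,U$, is then continuous and, by the $G$-invariance of the weights, $G$-equivariant, since $f(gx)=gf(x)$.

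Second, and this is the crux, I would identify the fixed-point sets of the nerve, namely $|\mathcal{N}(\mathcal{U})|^H = |\mathcal{N}(\mathcal{U}^H)|$ naturally for every $H\le G$. The regularity condition RC1 is what makes this work: if $h\in H$ permutes the vertices of a simplex of $\mathcal{N}(\mathcal{U})$, then for any vertex $U$ of that simplex the sets $U$ and $hU$ both contain the nonempty intersection indexing the simplex, so RC1 forces $hU=U$. Hence every $H$-fixed point of $|\mathcal{N}(\mathcal{U})|$ lies in a simplex all of whose vertices are $H$-invariant, so $|\mathcal{N}(\mathcal{U})|^H$ is the realization of the full subcomplex on $H$-invariant vertices. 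It then remains to match this subcomplex with $\mathcal{N}(\mathcal{U}^H)$. Using the good-cover structure — each element, and each nonempty finite intersection of elements, is a contractible component of a $G$-contractible orbit and so retracts $G$-equivariantly to a point whose stabilizer equals the stabilizer of the set — I would establish the equivalence that $U$ is $H$-invariant if and only if $U^H\neq\emptyset$, and the analogous statement for intersections (an $H$-invariant nonempty intersection retracts to an orbit whose retraction point is $H$-fixed, hence the intersection meets $X^H$). This makes the vertices and simplices of the two complexes coincide and yields the asserted natural identification. I expect this step to be the main obstacle, since it is precisely here that the equivariant hypotheses — RC1 together with the $G$-contractibility of intersections guaranteed by the good-cover definition — must be combined.

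Finally I would assemble the argument. By Proposition \ref{prop:comparison of Definitions} the cover $\mathcal{U}^H$ is a good cover of $X^H$, so the classical Nerve theorem applies; moreover, under the identification above, $f^H\colon X^H\to |\mathcal{N}(\mathcal{U})|^H=|\mathcal{N}(\mathcal{U}^H)|$ is exactly the nerve map determined by the restricted partition of unity $\{\phi_U|_{X^H}\}$, which is a homotopy equivalence by the partition-of-unity form of the classical Nerve theorem. Thus $f^H$ is a homotopy equivalence for every $H\le G$, and the equivariant Whitehead theorem gives that $f$ is a $G$-homotopy equivalence, as claimed.
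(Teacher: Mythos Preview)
The paper does not supply its own proof of this theorem; it is quoted directly from Yang \cite[Theorem 2.19]{Yang}. So there is no in-paper argument to compare against, and your outline stands on its own.

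Your approach is the standard fixed-point reduction and is correct. The key step, as you identify, is the isomorphism $|\mathcal{N}(\mathcal{U})|^H \cong |\mathcal{N}(\mathcal{U}^H)|$. Condition RC1 gives one direction (an $H$-fixed barycentric point lies in a simplex whose vertices are $H$-invariant, and if $U^H\neq\emptyset$ then any $h\in H$ satisfies $hU\cap U\neq\emptyset$, forcing $hU=U$). For the other direction you need that an $H$-invariant nonempty intersection $U_0\cap\cdots\cap U_k$ meets $X^H$: here RC2 shows that $\tilde U_0\cap\cdots\cap\tilde U_k = G(U_0\cap\cdots\cap U_k)$ with disjoint translates, so the $G$-contraction to an orbit restricts on $U_0\cap\cdots\cap U_k$ to a $G_W$-equivariant contraction to a single point, where $G_W\supseteq H$ is the stabilizer of $W=U_0\cap\cdots\cap U_k$; that point is then $H$-fixed. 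Your sketch captures this. The remaining steps --- averaging a subordinate partition of unity to make the nerve map equivariant, invoking Proposition~\ref{prop:comparison of Definitions} so that $\mathcal{U}^H$ is a good cover of the CW complex $X^H$, and applying the equivariant Whitehead theorem --- are routine. One small wording point: it is regularity of the cover (Definition~\ref{defi:regular cover}, both RC1 and RC2) that makes $\mathcal{N}(\mathcal{U})$ a regular simplicial $G$-complex and hence a $G$-CW complex, not RC1 alone; but condition R1 (which follows from RC1) already suffices for the $G$-CW structure on the realization, so this does not affect the argument.
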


We are now prepared to define the main concept of this paper.

\begin{defi}\label{defi:sct & ct}
The \emph{strict $G$-covering type} of a given $G$-space $X$  is  the minimal cardinality $\sct_G(X)$
of the set of orbits of a $G$-invariant regular good cover for $X$.

Furthermore, the \emph{$G$-covering type} of a $G$-space $X$ is the minimal value of $\sct_G(Y)$ among
spaces $Y$ that are $G$-homotopy equivalent to $X$, i.e.,
 $$\ct_G(X):=\min\{\sct_G(Y)\,\mid\, Y {\overset{G}{\,\simeq\,}} X\}$$
\end{defi}

Note that $\sct_G(X)$ and $\ct_G(X)$ can be infinite (e.g., if $X$ is an infinite discrete space) or even
undefined, if the space does not admit any good covers. In what follows we will always tacitly assume that
the spaces under consideration  admit finite good covers.

It is clear from the definitions that a $G$-invariant regular open cover $\mathcal{U}$ of $X$ induces an open
good cover of the orbit space $X/G$ as the projection map $\pi: X \to X/G$ is open and $G$-contraction of
$\tilde{U}$ to an orbit $Gx$ induces a contraction of $p(\tilde{U})$ to $*=[Gx]$ in $X/G$. Thus we get
the following result

\begin{coro}\label{coro:ct(X^*) < ct_G(X)}
For a $G$-space $X$ which is a $G$-CW complex we have
$$ \sct(X/G) \leq \sct_G(X) \;\; \; {\text{and
}} \;\;\; \ct(X/G) \leq \ct_G(X)$$
\end{coro}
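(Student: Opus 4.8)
The plan is to prove the two inequalities in Corollary \ref{coro:ct(X^*) < ct_G(X)} by directly constructing a good cover of the quotient space $X/G$ from a given regular good $G$-cover of $X$, and then exploiting the $G$-homotopy invariance already available for the strict version to pass to the homotopy-invariant statement. The key observation, already recorded in Remark \ref{rem: good cover induces good of orbit}, is that if $\mathcal{U}$ is a good $G$-cover of $X$ then the family $\mathcal{U}^* = \{\pi(U) \mid U \in \mathcal{U}\}$ of images under the projection $\pi\colon X \to X/G$ is a good cover of $X/G$. So the first inequality $\sct(X/G) \leq \sct_G(X)$ should follow almost formally once I verify two things: that distinct orbits $\tilde{U}$ of $\mathcal{U}$ give rise to the same number of sets in $\mathcal{U}^*$ (in fact one set $\pi(\tilde U)$ per orbit, since $\pi$ collapses each orbit of sets to a single image), and that a minimal good $G$-cover therefore produces a good cover of $X/G$ with at most as many members as there are orbits in $\mathcal{U}$.

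First I would fix a $G$-invariant regular good cover $\mathcal{U}$ of $X$ realizing $\sct_G(X)$, split into orbits $\tilde{U}_1, \ldots, \tilde{U}_m$ with $m = \sct_G(X)$, and consider the images $\pi(\tilde{U}_i)$. Since $\pi$ is an open map (the quotient projection for a group action on a $G$-CW complex is open), each $\pi(\tilde{U}_i)$ is open in $X/G$, and together they cover $X/G$ because $\mathcal{U}$ covers $X$. For the goodness, I would use that the $G$-contraction of a non-empty intersection $\tilde{U}_{i_1} \cap \cdots \cap \tilde{U}_{i_k}$ to an orbit $Gx$ descends under $\pi$ to a contraction of the corresponding intersection $\pi(\tilde U_{i_1}) \cap \cdots \cap \pi(\tilde U_{i_k})$ to the single point $[Gx]$ in $X/G$; here one checks that $\pi$ of the intersection equals the intersection of the $\pi$-images, which holds because the sets are $G$-invariant. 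This establishes that $\mathcal{U}^*$ is a good cover of $X/G$ with $m$ members, hence $\sct(X/G) \leq m = \sct_G(X)$.

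For the second inequality $\ct(X/G) \leq \ct_G(X)$, I would argue that the construction is compatible with passing to $G$-homotopy equivalent models. Let $Y \overset{G}{\simeq} X$ realize $\ct_G(X)$, so $\sct_G(Y) = \ct_G(X)$. By the first inequality applied to $Y$ we get $\sct(Y/G) \leq \sct_G(Y) = \ct_G(X)$. The remaining point is that a $G$-homotopy equivalence $X \overset{G}{\simeq} Y$ induces an ordinary homotopy equivalence $X/G \simeq Y/G$ of orbit spaces, so that $\ct(X/G) = \ct(Y/G) \leq \sct(Y/G)$, and the chain of inequalities closes. I expect the main obstacle to be precisely this last step: verifying that the descent of a $G$-homotopy equivalence to the quotients is itself a genuine homotopy equivalence of orbit spaces. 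This is standard for sufficiently nice actions (it follows from functoriality of the orbit space construction together with the fact that $G$-homotopic maps descend to homotopic maps on quotients), but it must be checked carefully in the $G$-CW setting, where one wants the homotopies to respect the orbit structure so that they pass to well-defined homotopies on $X/G$. Once this descent is in hand, the corollary follows.
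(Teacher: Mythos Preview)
Your proposal is correct and follows essentially the same route as the paper. The paper's justification is the short paragraph preceding the corollary together with Remark~\ref{rem: good cover induces good of orbit}: a regular good $G$-cover of $X$ projects to a good cover of $X/G$ with one set per orbit, giving $\sct(X/G)\le\sct_G(X)$; the second inequality is left implicit and is obtained exactly as you describe, by passing to a $G$-homotopy equivalent model realizing $\ct_G(X)$ and using that $G$-homotopy equivalences descend to homotopy equivalences of orbit spaces.
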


In the proof of Proposition \ref{prop:comparison of Definitions} we used the fact that
open stars of a regular $G$-complex $K$ are a good $G$-cover of $|K|$. In view of
Definition \ref{defi:G-good cover II} this immediately yields the following result.

\begin{prop}\label{prop:estimate vertices by covering type}
Let $K$ be a simplicial $G$-complex with a regular action of $G$. Then
   $$ \ct_G(|K|)\leq \sct_G(|K|) \leq \Delta^*(K),$$
where $\Delta^*(K)$ denotes the number of orbits of vertices of $K$.
\end{prop}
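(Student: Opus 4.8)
The plan is to prove the two inequalities separately. The left-hand inequality $\ct_G(|K|)\le\sct_G(|K|)$ is immediate from Definition \ref{defi:sct & ct}: since $\ct_G$ is the minimum of $\sct_G(Y)$ over all $Y\overset{G}{\simeq}|K|$, we may simply take $Y=|K|$ itself. All the content lies in the right-hand inequality $\sct_G(|K|)\le\Delta^*(K)$, which I would prove by exhibiting one explicit good $G$-cover of $|K|$ whose set of orbits has cardinality exactly $\Delta^*(K)$.

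The cover in question is $\mathcal{U}=\{\mathrm{st}(v)\mid v\in V(K)\}$, the family of open stars of the vertices of $K$. First I would record the elementary facts: the open stars form an open cover of $|K|$; the action is carried along by $g\cdot\mathrm{st}(v)=\mathrm{st}(gv)$, so $\mathcal{U}$ is $G$-invariant and its orbits correspond bijectively to the orbits of vertices of $K$, giving $|\mathcal{U}/G|=\Delta^*(K)$; and the nerve $N(\mathcal{U})$ is canonically isomorphic to $K$, because a collection of open stars has non-empty intersection precisely when the corresponding vertices span a simplex. Since $K$ is a regular $G$-complex by hypothesis, this isomorphism shows $N(\mathcal{U})$ is a regular $G$-complex, i.e.\ $\mathcal{U}$ is a regular $G$-cover in the sense of Definition \ref{defi:regular cover}. (Condition RC1 can also be seen directly: if $gv\neq v$ then by R1 the vertices $v$ and $gv$ lie in no common simplex, so $\mathrm{st}(v)\cap\mathrm{st}(gv)=\emptyset$.)

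It then remains to verify that $\mathcal{U}$ is \emph{good}, and this is the only substantial step; I would carry it out through Proposition \ref{prop:comparison of Definitions}, checking that $\mathcal{U}^H$ is a good cover of $|K|^H$ for every subgroup $H\le G$. The key observation is that R1 forces the fixed set $|K|^H$ to be the full subcomplex on the $H$-fixed vertices: an interior point of a simplex $\sigma$ is $H$-fixed iff $H$ fixes $\sigma$ setwise, and if $H$ fixes $\sigma$ setwise then, by R1, it fixes each of the vertices of $\sigma$. Consequently $\mathrm{st}(v)^H=\mathrm{st}(v)\cap|K|^H$ is empty unless $v$ is $H$-fixed, in which case it is exactly the open star of $v$ inside the subcomplex $|K|^H$. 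Thus, up to its empty members, $\mathcal{U}^H$ is the open-star cover of $|K|^H$, and the open-star cover of any simplicial complex is good, since every non-empty finite intersection of open stars is the open star of a simplex, hence contractible. By Proposition \ref{prop:comparison of Definitions} this makes $\mathcal{U}$ a good $G$-cover.

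Finally, $\mathcal{U}$ is a $G$-invariant regular good cover with $\Delta^*(K)$ orbits, so Definition \ref{defi:sct & ct} yields $\sct_G(|K|)\le\Delta^*(K)$, completing the argument. I expect the main obstacle to be precisely the goodness verification, and within it the fixed-point analysis identifying $|K|^H$ with the full subcomplex on the $H$-fixed vertices; once that is in hand, everything else is either formal from the definitions or standard simplicial topology.
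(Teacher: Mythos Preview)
Your proposal is correct and follows the same approach as the paper: the paper simply remarks that the open stars of a regular $G$-complex form a good $G$-cover (citing this as a fact used in the proof of Proposition~\ref{prop:comparison of Definitions}), and then the inequality is immediate. Your write-up supplies the details the paper omits, in particular the identification of $|K|^H$ with the full subcomplex on the $H$-fixed vertices via condition R1 and the verification of goodness through Proposition~\ref{prop:comparison of Definitions}.
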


In discrete geometry, to every finite simplicial complex $K$ of dimension $d$ one can associate
the so called \emph{f-vector} $\vec{f}(K)=(f_0(K), f_1(K), \,\dots\, , f_d(K))$, where $f_i(K)$
is the number of $i$-dimensional faces of $K$. Analogously, given a $d$-dimensional
simplicial $G$-complex $K$ we define the \emph{equivariant f-vector}
\begin{equation}\label{def: G-vectors f}
 \vec{f}_G(K):= (f_{G,0}(K), f_{G,1}(K), \dots  , f_{G,d}(K)) \;\;
\end{equation}
where $f_{G,i}(K)$ is the number of orbits of $i$-dimensional  simplices of $K$.

Note that the $f$-vector and the equivariant $f$-vector of a simplicial $G$-complex are related by the
formula
$$f_i(K)= {\underset{\sigma_i}\sum} \, |G/G_{\sigma_i}| =
{\underset{1}{\overset{f_{G,i}}{\,\sum}}}\,|G/G_{\sigma}|,$$
where the sum is taken over representatives of all orbits of $i$-simplices $\sigma$ of $K$ or equivalently
of all $i$-simplices of the induced triangulation of $K/G$.

The main aim of this paper is to give some lower estimates of $f_{G,0}(K)$ and $f_{0}(K)$.

\subsection{$G$-covering type of one-dimensional complexes}
\label{$G$-covering type of one-dimensional complexes}
Our first explicit example is the computation of the $G$-covering type of a finite $G$-graph, i.e.,
a space $X=|K|$, where $K$ is a 1-dimensional simplicial complex with a regular simplicial action
of a group $G$. In other words, $G$ permutes vertices and edges of $K$ and for every edge $e=[v_1,v_2]$,
if $g[v_1, v_2] \subset [v_1,v_2]$, then $g={\rm id}_{[v_1,v_2]}$ .

Before proceeding, let us recall the corresponding result for the non-equivariant case
(cf. Karoubi-Weibel \cite[Proposition 4.1]{K-W})\newline
{\it If $X_h$  is a bouquet of $h$ circles then
	\begin{equation}\label{first bouquet}\ct(X_h) =\Bigg\lceil\,\frac{3+\sqrt{1+8h}}{2}\,\Bigg\rceil
	\end{equation}
	where $ \lceil \alpha \rceil$ denotes the ceiling of  a real number $\alpha$. In other words, $\ct(X_h)$ is the unique integer $ n$ such that
	\begin{equation}\label{second bouquet} \binom{n-2}{2} < h \leq \binom{n-1}{2}
\end{equation}}

Let $X$ be a finite $G$-graph and let $\UU$ be a regular $G$-cover of $X$. As before, denote by $\tilde\UU^*$ the
induced cover of $X/G$. Suppose first that $X=X_{(H)}$, i.e., we have one orbit type $(H)$. In this case we have
$\ct_G(X)= \ct(X/G)$, because every deformation of $\tilde{U}_{i_1}^*\cap\cdots\cap \tilde{U}^*_{i_k}$ of
projections $\tilde{U}^*_{i_j}= \pi(\tilde{U}_{i_j})$ to a point $[x]=\pi(x)\in X^*=X/G$ in
$\tilde{U}_{i_1}^*\cap\cdots\cap\tilde{U}^*_{i_k}$ can be lifted to  a $G$-deformation of
$\tilde{U}_{i_1}\cap\cdots\cap\tilde{U}_{i_k}$ to the orbit $Gx = G/H$.
Observe that  topological loops in $X^*$ come from two different types of situations:
\begin{itemize}
\item[i)] {if $|G/H|=  m$ and $G$ acts as the  cyclic group of order $m$ on a cycle of the form
$$[v_1, v_2],[v_2,v_3],\ldots,[v_m,v_1],$$
with the action $v_i\mapsto v_{i+1} \mod m$.}
\item[ii)] {if the loop is given as the graph cycle $[w_1, w_2], [w_2,w_3],\dots ,
[w_n,w_1]$ and $gw_i\notin \{w_1, \dots , w_n\}$ for any $g\in G$. Consequently, for every $g\in H$ such
that $[g]\neq [H] \in G/H$ the image of the cycle $[w_1, w_2], [w_2,w_3], \dots , [w_n,w_1]$ is another
disjoint cycle  $[w^\prime_1, w^\prime_2], [w^\prime_2,w^\prime_3] \dots , [w^\prime_n, w^\prime_1]$.  The
group $G$ is acting trivially on elements of each such cycle and is permuting these cycles, i.e., each
cycle can be identified with an element of $G/H$ and $G$ is permuting them transitively as it permutes
elements of $G/H$. }
\end{itemize}

Consequently, every 1-dimensional finite regular $G$-graph is, up to $G$-homotopy, a union of topological
loops of type i) or ii). Assume there are $k$ cycles  of type i), and $l \times |G/H|$ cycles (with the
corresponding action) of type ii). This shows that $\ct_G(X)=\ct(X^*)= k +l$, i.e., the number of loops in the
bouquet $X_h$ of $h=k+l$
circles being homotopy equivalent to $X^*=X/G$. Applying (\ref{first bouquet}),   as a result we get a formula
for $G$-covering type of a finite graph $X$ such that $X=X_{(H)}$
\begin{equation}\label{third bouquet}
\ct_G(X)= \bigg\lceil\,\frac{3+\sqrt{1+8(k+l)}}{2}\,\bigg\rceil,
\end{equation}
with $X$ having  $k$ cycles of type i) and $l$ orbits of cycles of type ii).

\zz{Now suppose that $X= (X\setminus \{*\}) \cup *$ where $\{*\}= \{*\}^G$ is a fixed point of the action and
$  (X\setminus \{*\})= (X\setminus \{*\})_{(H)}$ is of one orbit type set with $H\neq G$.
If $X$ is as above and $ e_1, \, \dots\, e_p$ be all edges outgoing (or equivalently ingoing) from $\{*\}$
then by $X^\prime_{(H)}$ we denote the compact closed set (graph) $X\setminus N_\epsilon(\{*\})$. Let next
$0\leq h_{(H)}(X)$, shortly $h_{(H)}$, be the number of loops of $ X^\prime_{(H)}/G$, i.e.the number of
generators of $\pi_1(X^\prime_{(H)}/G)$. In this case we have the same formula (\ref{third bouquet}) for
$\ct_G(X)$ as in the previous case.}

We handle the general case by  using induction with respect to  the partially ordered
set of all orbit types $\mathcal{S}_G$ with the order $ (H) \succ (K) \, \Longleftrightarrow \, K \subset
gHg^{-1}$ for some $g\in G$. For a given $G$-space $X$, by $\mathcal{S}_G(X)$ we denote the subset of
$\mathcal{S}_G$ consisting of  $(H)$ for which $X_{(H)}\neq\emptyset$. Obviously,  we can restrict our consideration
to $(H)\in \mathcal{S}_G(X)$.

Let $N_\epsilon(A)$ denote open and invariant  neighbourhood   of invariant set $A$. If  $X$ is a $G$-graph
and $A {\overset{G}\subset} X$ is an invariant subcomplex
then as $N_\epsilon(A)$ we can take the union of open stars of all vertices of $A$, i.e. the union of all
edges of $ A$ and half-open edges of length $\epsilon$ of $X$ which have one vertex in $A$. In the case of
$G$-graphs we can take always $\epsilon=1$, i.e. the open star $N_1(A)={\rm st}(A)$.

In the inductive step, suppose that $X=X^{(H)}= X_{(H)}\sqcup X^{(K)\nsucc (H)}$.  In general there exists
$\epsilon >0$ such that $X^{(K)\nsucc (H)}$ is a $G$-deformation retract of  the open and invariant
neighbourhood $N_\epsilon(X^{(K)\nsucc (H)})$, but here it holds for $N_1(X^{(K)\nsucc (H)})$.

Let $X^\prime_{(H)}$ be the compact closed set (graph) $X\setminus N_\epsilon(X^{(K)\nsucc (H)})$, and let
$0\leq h_{(H)}(X)$, or shortly $h_{(H)}$, be the number of loops of $ X^\prime_{(H)}/G$, i.e. the number of
generators of $\pi_1(X^\prime_{(H)}/G)$.

We can now state the main result of this subsection.
\begin{theorem}\label{teo: ct_G of graph}
Let $X$ be a finite connected  graph with a regular simplicial action of a group $G$, and let
$\mathcal{S}_G(X):=\{(H)\in \mathcal{S}_G\mid X_{(H)}\neq \emptyset \}$. Then
$$ \ct_G(X)\, =\, \sum_{(H)\in   \mathcal{S}_G(X)} \, \Bigg\lceil\,\frac{3+\sqrt{1+8h_{(H)}}}{2}\,\Bigg\rceil
\,.	$$
\end{theorem}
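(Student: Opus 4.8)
The plan is to prove the equality by induction on the number $|\mathcal{S}_G(X)|$ of orbit types, using the decomposition of a regular $G$-graph into its orbit-type strata. The base case, a single orbit type $(H)$, is already settled in the discussion preceding the statement: there $\ct_G(X)=\ct(X/G)$ because every deformation of an intersection of projected cover sets in $X/G$ to a point lifts to a $G$-deformation of the corresponding intersection of $G$-cover sets to an orbit $Gx\cong G/H$, and $X/G$ is homotopy equivalent to a bouquet of $h_{(H)}$ circles, so the Karoubi--Weibel formula (\ref{first bouquet}) yields the single summand (\ref{third bouquet}).

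For the inductive step I would single out an orbit type $(H)$ so that the remaining, more singular, strata form the closed invariant subcomplex $X^{(K)\nsucc(H)}$ appearing before the statement, which is a $G$-deformation retract of its open star $N_1\big(X^{(K)\nsucc(H)}\big)$. Removing this open neighborhood leaves the compact single-orbit-type graph $X'_{(H)}$, whose quotient has exactly $h_{(H)}$ loops, so that, up to $G$-homotopy, $X$ is the union of $X'_{(H)}$ and the retracted remainder, meeting along the collar. The goal then becomes the additivity statement $\ct_G(X)=\big\lceil (3+\sqrt{1+8h_{(H)}})/2\big\rceil+\ct_G\big(X^{(K)\nsucc(H)}\big)$, after which the inductive hypothesis applied to the closed remainder completes the sum.

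For the upper bound I would assemble a good $G$-cover of $X$ from an optimal good $G$-cover of $X'_{(H)}$ supplied by the base case, together with an optimal good $G$-cover of the remainder realized on its collar neighborhood via the retraction, checking that the interface along the collar produces only $G$-contractible intersections, so that the combined family is again a regular good $G$-cover and no extra orbits are required. The lower bound is the delicate point and the main obstacle: one must show that the two strata genuinely demand \emph{disjoint} families of cover orbits, i.e. that $\ct_G$ is additive and not merely subadditive here, in sharp contrast with the non-equivariant bouquet where loops freely share cover sets. The tool is the rigidity that each orbit $\tilde U$ of a good $G$-cover is $G$-contractible to a single orbit $G/H_{\tilde U}$, forcing every point of $\tilde U$ to have orbit type $\preceq (H_{\tilde U})$; transporting a given good $G$-cover of any $Y\overset{G}{\simeq}X$ along the collar retraction and projecting to the quotient, the subfamily surviving over $X'_{(H)}$ must resolve all $h_{(H)}$ loops and hence, by the base case, contributes at least $\big\lceil(3+\sqrt{1+8h_{(H)}})/2\big\rceil$ orbits, while the complementary subfamily restricts to a good $G$-cover of the remainder and contributes at least $\ct_G\big(X^{(K)\nsucc(H)}\big)$ orbits.

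I would expect the technical heart to lie precisely in verifying that these manipulations of covers --- restriction to a stratum, projection to the quotient, and transport along the collar retraction --- preserve regularity and goodness and do not collapse the orbit count, which is exactly where the single-contraction-type rigidity of equivariant good covers must be exploited with care to separate the loop contributions of distinct orbit types.
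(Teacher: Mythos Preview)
Your proposal follows essentially the same route as the paper: induction on $|\mathcal{S}_G(X)|$, peeling off one orbit-type stratum at a time using the collar retraction onto the closed remainder, and proving additivity of $\ct_G$ along this decomposition. The base case and the upper bound are handled in the same way in both.

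The only substantive difference is in how the lower bound (the additivity you correctly flag as the delicate point) is nailed down. You leave this as the ``technical heart'' to be verified; the paper supplies a concrete swap-and-contradict mechanism. Working with a good $G$-cover of $X$ realized by orbits of open stars of vertices (via the equivariant nerve theorem), the paper splits it into $\mathcal{U}_1=\{\tilde U_i:\tilde U_i\cap X^{(H)}\neq\emptyset\}$ and $\mathcal{U}_2$ its complement, of cardinalities $c_1,c_2$ with $c_1+c_2=\ct_G(X)$. The orbit-type rigidity you invoke (a $G$-contraction cannot leave $X^{(H)}$) shows $\mathcal{U}_1$ restricts to a good $G$-cover of $X^{(H)}$, hence $c_1\ge\ct_G(X^{(H)})$; if $c_1>\ct_G(X^{(H)})$, one pulls back an optimal good $G$-cover of $X^{(H)}$ along the retraction $N_1(X^{(H)})\to X^{(H)}$ and splices it with $\mathcal{U}_2$ to get a strictly smaller good $G$-cover of $X$, a contradiction. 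The same swap argument forces $c_2=\ct_G(X_{(H')})$. This is exactly the missing ingredient that turns your sketch into a proof, and it uses nothing beyond the ideas you already identified.
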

\begin{proof}
Let $(H)$ be a minimal orbit type in $\mathcal{S}_G(X)$. For $X=X^{(H)}= X_{(H)}$ we have already shown the
statement, cf. (\ref{third bouquet}). Suppose that $X=X^{(H^\prime)}= X_{(H^\prime)} \sqcup X_{(H)}$, with
$X_{(H)}=X^{(H)}$, where $(H^\prime)\succ (H)$ and that there does not exists $(K)\neq (H^\prime), (H)$ such that $(H^\prime)\succ (K)\succ (H)$.
	
Let $\mathcal{U}=\{\tilde{U}_i\}$,  $1\leq i \leq \ct_G(X)$,  be a regular good $G$-cover of $X$ consisting of
$G$-contractible saturations of open stars of vertices. Decompose $\mathcal{U} = \mathcal{U}_1 \sqcup
\mathcal{U}_2$ into two subfamilies of cardinality $c_1,\, c_2$ respectively, where $\mathcal{U}_1$ consists
of $\tilde{U}_i$ for which $\tilde{U}_i \cap X^{(H)} \neq \emptyset$, and  $\mathcal{U}_2$ consists of
$\tilde{U}_i$ for which $\tilde{U}_i \cap X^{(H)} =\emptyset$. Note that if
$\tilde{U}_i \cap X^{(H)} \neq \emptyset$
then $\tilde{U}_i $ is $G$-contractible to an orbit $Gx$, where $x$ is a vertex in $X^{(H)}$, because
the contraction
is a $G$-map. Moreover, ${\underset{j=1}{\overset{c_2} {\,\cup\,}}} \tilde{U}_j = X_{(H^\prime)}$, because
$X_{(H^\prime)} = X^{(H^\prime)}\setminus X^{(H)}$ and $X^{(H)}$ consists of all edges, such that
both vertices are in $X^{(H)}$.
	
By Proposition \ref{prop:comparison of Definitions}, sets $\{\tilde{U}_i\}_{i=1}^{c_1}$ form a regular good
$G$-cover of $ X^{(H)}$ thus $c_1 \geq \ct_G(X^{(H)})$.
Suppose that $c_1 < \ct_G(X_{(H)}) = \ct_G(X^{(H)})$. Then there a exists a good $G$-cover
$\{\tilde{U}_i^\prime\}_{i=1}^{c^\prime_1}$ of  $ X^{(H)})$  with $ c^\prime_1 < c_1$. Let
$r: N_1(X^{(H)}) \to X^{(H)}$ be a $G$-deformation retraction. Then
$\{\tilde{U}_i^{\prime\prime}\}_{i=1}^{c^\prime_1}= r^{-1}(\{\tilde{U}_i^\prime\}_{i=1}^{c^\prime_1}$ is a
good $G$-cover of $N_1(X^{(H)})$, because we can compose the $G$-deformation retraction
$r: N_\epsilon(X^{(H)})\to X^{(H)}$ with corresponding $G$-contractions of $\{\tilde{U}_i^\prime\}$.
Since ${\underset{i=1}{\overset{c_1^\prime} {\,\cup\,}}} \tilde{U}_i^{\prime\prime}\cup {\underset{j=1}{
\overset{c_2} {\,\cup\,}}} \tilde{U}_j = N_\epsilon(X^{(H)}) \cup X_{(H_\prime)} = X$, it gives a good $G$-
cover of $X$ of cardinality $c_1^\prime + c_2 < c_1+ c_2= \ct_G(X)$ which leads to a contradiction.
	
Now we have to show that $c_2= \ct_G(X_{(H^\prime)})$. Since  $\{\tilde{U}_j\}_{j=1}^{c_2}$ form  a good $G$-
cover of $X_{(H^\prime)}$ we have $c_2 \geq \ct_G(X_{(H^\prime)})$.
If $c_2 >  \ct_G(X_{(H^\prime)})$ then we can take a good $G$-cover
$\{\tilde{U}_j^\prime\}_{j=1}^{c^\prime_2}$ with $c_2> c_2^\prime = \ct_G(X_{(H^\prime)})$.
The union of elements  $\{\tilde{U}_i\}_{i=1}^{c_1}$ and $\{\tilde{U}_j^\prime\}_{j=1}^{c^\prime_2}$ gives a
good $G$-cover of $X^{(H)}\sqcup X_{(H^\prime)} = X$ of cardinality
$c_1+c_2^\prime < \ct_G(X)$ which leads to a contradiction. Consequently $c_2=  \ct_G(X_{(H^\prime)})$. This
shows the statement for $X$ having two different orbit types which are comparable.
	
If  in $X$ there are more than one   minimal orbit types $(H_1), \dots , (H_r)$  and another orbit type
$(H^\prime)\succ (H_i)$ for all $1\leq i \leq r$ then the same argument shows the statement for
$X=X^{(H^\prime)}$. Notify that if $X$ consists of only  different minimal orbit types (not comparable)  then
it is not connected.
	
By induction over orbit types assume that the statement holds for $G$-graphs $Y$ having at most $n$ different
orbit types $(H_1), \, \dots \, (H_n)$ and $X=Y \sqcup X_{(H_{n+1})}$. By the above remark and connectedness
of $X$ we can assume that $(H_{n+1})$ is not minimal. By repeating  the argument of the first inductive step
we get $\ct_G(X) = \ct_G(Y) + \ct_G(X_{(H_{n+1})})$.
Using the induction assumption we get
$$ \ct_G(X) =  \ct_G(Y) + \ct_G(X_{(H_{n+1})} ) = \sum_{i=1}^n \, \Bigg\lceil\,\frac{3+\sqrt{1+8h_{(H_i)}}}{2}
\,\Bigg\rceil + \Bigg\lceil\,\frac{3+\sqrt{1+8h_{(H_{n+1})}}}{2}\,\Bigg\rceil \,$$
as claimed.	
\end{proof}

\section{Equivariant covering type and  minimal triangulations of surfaces}
\label{Equivariant covering type and  minimal triangulations of surfaces}

In this section we determine the equivariant covering type of a closed orientable surface $\Sigma_\g$
of genus $\g$ with respect to an orientation-preserving actions of a finite group $G$.
As a consequence, we are able to estimate the minimal number of vertices and $G$-orbits of vertices
in a triangulation of $\Sigma_\g$ by a regular simplicial $G$-complex.

\subsection{Orientation preserving actions on orientable surfaces}
Let $\Sigma_\g$ be oriented surface of genus $\g \geq 0$. Suppose that $G$ acts {\bf effectively} on
$\Sigma_\g$ {\bf preserving orientation}, i.e., it is a (finite) subgroup of ${\rm Homeo}_+(\Sigma_\g)$.
It is known (Hurwitz for $\Sigma_\g$ with $\g\geq 2$,   Brouwer, Kerekjarto and Eilenberg for $\Sigma_\g=S^2$
and folklore for $\Sigma_\g=\bt^2$) that there exists a holomorphic structure $\mathcal{H}$ on $\Sigma_\g$
in which ${\rm Homeo}_+(\Sigma_\g)$ can be viewed as a subgroup of biholomorphic isomorphisms
${\rm Hol}(\Sigma_\g, \mathcal{H})$ of $(\Sigma, \mathcal{H})$.
More precisely we have the following theorem
\begin{theorem}[Geometrization of action]\label{thm:Edmonds}
Given a finite group $G$ of orienta\-tion-preserving
homeomorphisms of a compact orientable surface $X$ of genus ${\rm g}$, there is a complex structure on $X$
with respect  to which $G$ is a  group of its conformal maps. Furthermore, the orbit
space $X'=X/G$ is a compact surface of genus $\g' \leq  \g$ and the relation between $\g$ and $\g'$ is given
by the Riemann-Hurwitz formula (see formulas (\ref{Riemann-Hurwitz}) and (\ref{converse Riemann-Hurwitz})
below).
\end{theorem}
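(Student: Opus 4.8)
The plan is to reduce the statement to the construction of a single $G$-invariant conformal structure, from which both the conformal action and the quotient description follow by standard complex-analytic arguments. The one genuinely delicate point is passing from a purely topological action to one that can be averaged, and this is exactly where the case division by genus enters.

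First I would replace the topological action by a smooth one. A classical fact is that every finite group acting by orientation-preserving homeomorphisms on a closed surface is topologically conjugate to a smooth (indeed real-analytic) action: for $X = S^2$ this is the Brouwer--Kerekjarto--Eilenberg theorem identifying finite subgroups of $\mathrm{Homeo}_+(S^2)$ up to conjugacy with subgroups of $SO(3)$; for $X = \bt^2$ it follows from the affine description of finite symmetry groups of flat tori; and for $\g \ge 2$ it is part of the Nielsen realization circle of ideas (Hurwitz, Nielsen, Kerckhoff). After conjugating, $G$ acts by orientation-preserving diffeomorphisms. I would then average: choosing any Riemannian metric $\mu_0$ on $X$ and setting $\mu := \tfrac{1}{|G|}\sum_{\varphi\in G}\varphi^{*}\mu_0$ produces a $G$-invariant metric, so $G$ acts by orientation-preserving isometries. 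On an oriented surface every metric determines, through the existence of isothermal coordinates, a unique compatible complex structure $\mathcal H$, and an orientation-preserving isometry is automatically $\mathcal H$-holomorphic; hence $G \subseteq \mathrm{Hol}(X,\mathcal H)$, which is the first assertion.

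Finally I would analyse the quotient. Near a point with nontrivial (necessarily cyclic, since an orientation-preserving finite linear group in the plane is conjugate into $SO(2)$) stabilizer of order $m$, the action is conformally linearizable as $z\mapsto \zeta z$ with $\zeta$ a primitive $m$-th root of unity, so the local quotient map is $z\mapsto z^{m}$ and $X' = X/G$ inherits the structure of a closed Riemann surface with $\pi\colon X\to X'$ a branched holomorphic cover of degree $|G|$. Applying the Riemann--Hurwitz formula
$$2\g - 2 = |G|\Bigl[(2\g'-2) + \sum_{i=1}^{r}\Bigl(1-\tfrac{1}{m_i}\Bigr)\Bigr],$$
where $m_1,\dots,m_r$ are the orders of the stabilizers at the $r$ branch points of $X'$, yields the stated relation between $\g$ and $\g'$. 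Since each $m_i\ge 2$ the bracketed correction term is nonnegative, which forces $2\g-2 \ge |G|(2\g'-2)$; together with the trivial case $\g'=0$ this gives $\g' \le \g$.

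The main obstacle is the smoothing/linearization step, which is where the three classical inputs are genuinely indispensable and where the uniform ``average a metric'' argument cannot be applied directly to mere homeomorphisms. Once the action has been made smooth and the invariant metric is in hand, the identification of orientation-preserving isometries with biholomorphisms, the local model $z\mapsto z^m$ at branch points, and the Riemann--Hurwitz bookkeeping are all routine, so the substance of the proof is concentrated entirely in the geometrization of the topological action.
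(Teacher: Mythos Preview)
The paper does not give a proof of this theorem: it is stated as a classical result, with the preceding sentence attributing the three cases exactly as you do (Hurwitz for $\g\ge 2$, Brouwer--Ker\'ekj\'art\'o--Eilenberg for $S^2$, folklore for $\bt^2$), and the Riemann--Hurwitz formula is simply recorded afterwards without derivation. Your sketch is correct and is precisely the standard argument that these attributions point to---smooth the action using the genus-by-genus classical inputs, average a metric, pass to the conformal structure via isothermal coordinates, and read off the branched-cover description locally. There is nothing to compare; you have supplied the proof the paper omits.
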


Moreover, Hurwitz' theorem says that the for $\g \geq 2$ the order of ${\rm Hol}(\Sigma_\g, \mathcal{H})$ is
$\leq 84(\g -1)$.

Let $\Sigma_\g$ be a compact Riemann
surface of genus ${\rm g}\geq 0$ and let $G$ be a group of holomorphic
automorphisms of $\Sigma_\g $. Let $ \Sigma_{\g\prime} =  \Sigma_\g/G$ be the quotient surface
of genus $\g^\prime$ with the projection $\pi:X \to X^\prime $ and
let $\{x^\prime_1,\, \ldots , x^\prime_r\}$ be the set of all points
over which $\pi$ is branched, i.e. the image of the singular orbits.
For any $x_{i,j}\in \pi^{-1}(x^\prime_j)$ being a point in the orbit
over $x^\prime_j$, the isotropy group $G_{x_{i,j}}$ is cyclic of
order $m_j$ (observe that the isotropy groups of points in one orbit
are conjugate). We denote by $m$ the order  of $G$. Now, it follows
that the orbit of $x_{i,j}$ has order $n_j={m}/{m_j}$. If
$x^\prime\in \Sigma_{g^\prime}\setminus \{x^\prime_1, \ldots ,
x^\prime_r\}$, then $\pi^{-1}(x^\prime)$ is an orbit isomorphic to
$G$ and consequently, for every $x\in \pi^{-1}(x^\prime)$,
$m_x=1$ and $n_x={m}/{m_x} = m$.

Denote by $\mathcal{S}$  the set of images of singular orbits $\{x_1^\prime, \dots, x_r^\prime\} $ in
$\Sigma^\prime$.
We have the classical Riemann-Hurwitz formula
\begin{equation}\label{Riemann-Hurwitz}
	\g= 1 + m(\g^\prime-1) +
	\frac{1}{2}\, m\, {\underset{j=1}{\overset{r}\sum}}\, (1
	-\frac{1}{m_j}),
\end{equation}
which let us also express $\g^\prime$ as a function of $\g$.
\begin{equation}\label{converse Riemann-Hurwitz}
	\g^{\prime} = 1 + \frac{1}{m}(\g-1) -
	\frac{1}{2}\,\big( {\underset{j=1}{\overset{r}\sum}}\, (1
	-\frac{1}{m_j})\big)\,.
\end{equation}
Riemann-Hurwitz formula gives relations between $\g,\, \g^\prime\,,r, \, m\,,\{m_1\,,m_2\,,\dots\,,m_r\}$,
where
$\g\geq 0$, $m=|G|$,  $2\g +2 \geq r\geq 0$, $m_j\mid m$, i.e., some necessary condition on the action of $G$
on $\Sigma_\g$. The system $ (\g^\prime\,, r,\,  \{m_1\,,m_2\,, \, \dots\,, m_r\}) $ is called the
\emph{generating vector},
provided a condition is satisfied (cf. \cite[Definition 2.2]{Broughton}).
The following classical classical result provides a converse to the Riemann-Hurwitz formula
(see \cite[Proposition 2.1]{Broughton} for more information).

\begin{prop}[Riemann’s Existence Theorem]\label{prop:Riemann’s Existence Theorem}
The group $G$ acts on the surface $\Sigma_\g$,
of genus $\g$, with branching data $(\g^\prime, r,  m_1, \dots , m_r,)$ if and only if the Riemann-Hurwitz
equation (\ref{Riemann-Hurwitz})
is satisfied, and $G$ has a generating
$(\g^\prime\,, r\,, m_1,\, \dots\,, m_r)$-vector.
\end{prop}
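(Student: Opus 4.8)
The plan is to prove both implications through the standard dictionary between regular branched $G$-coverings of $\Sigma'_{\g'}$ and epimorphisms onto $G$ from the fundamental group of the punctured quotient surface. Write $\Sigma^{\prime\circ}=\Sigma'_{\g'}\setminus\{x'_1,\dots,x'_r\}$ and recall the standard presentation $\pi_1(\Sigma^{\prime\circ})=\langle A_1,B_1,\dots,A_{\g'},B_{\g'},C_1,\dots,C_r \mid \prod_{i=1}^{\g'}[A_i,B_i]\prod_{j=1}^{r}C_j=1\rangle$, where $C_j$ is a small loop encircling $x'_j$. A generating $(\g',r,m_1,\dots,m_r)$-vector is, by definition, a tuple $(a_1,b_1,\dots,a_{\g'},b_{\g'},c_1,\dots,c_r)$ of elements that generate $G$, satisfy $\prod[a_i,b_i]\prod c_j=1$, and for which each $c_j$ has order exactly $m_j$; such a tuple is the same datum as an epimorphism $\theta\colon\pi_1(\Sigma^{\prime\circ})\to G$ with $A_i\mapsto a_i$, $B_i\mapsto b_i$, $C_j\mapsto c_j$.

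For the necessity (``only if'') direction I would start from a given action of $G$ on $\Sigma_\g$. By Theorem \ref{thm:Edmonds} I may assume it is conformal, and restricting the projection $\pi$ over $\Sigma^{\prime\circ}$ yields an unbranched regular $G$-covering $\Sigma_\g\setminus\pi^{-1}(\mathcal S)\to\Sigma^{\prime\circ}$. Its monodromy is an epimorphism $\theta\colon\pi_1(\Sigma^{\prime\circ})\to G$, and the local monodromy $\theta(C_j)=c_j$ around $x'_j$ generates the cyclic isotropy group of order $m_j$, so $c_j$ has order $m_j$. The images of the standard generators therefore constitute a generating $(\g',r,m_1,\dots,m_r)$-vector, while the Riemann--Hurwitz relation (\ref{Riemann-Hurwitz}) holds automatically from the discussion preceding the statement.

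For the sufficiency (``if'') direction, assume (\ref{Riemann-Hurwitz}) holds and fix a generating vector, i.e.\ an epimorphism $\theta$ as above. Since each $c_j$ has order exactly $m_j$, the map $\theta$ factors through an epimorphism $\bar\theta\colon\Delta\to G$ from the Fuchsian (orbifold) group $\Delta=\langle A_i,B_i,C_j \mid C_j^{m_j}=1,\ \prod[A_i,B_i]\prod C_j=1\rangle$, which I realise as a cocompact group acting on $\mathbb{H}$ with quotient orbifold of the prescribed signature (replacing $\mathbb{H}$ by $\mathbb{C}$ or $S^2$ in the finitely many degenerate signatures). The kernel $K=\ker\bar\theta$ is torsion-free: every torsion element of $\Delta$ is conjugate to a power $C_j^k$ with $0<k<m_j$, and $\bar\theta(C_j^k)=c_j^k\neq 1$ because $c_j$ has order $m_j$. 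Hence $K$ acts freely, $\tilde\Sigma=\mathbb{H}/K$ is a closed Riemann surface, and $G\cong\Delta/K$ acts holomorphically and effectively on $\tilde\Sigma$ with quotient $\mathbb{H}/\Delta=\Sigma'_{\g'}$ and branching data $(\g',r,m_1,\dots,m_r)$. The genus count $2-2\g_{\tilde\Sigma}=m(2-2\g')-\sum_j (m/m_j)(m_j-1)$, compared with (\ref{Riemann-Hurwitz}), gives $\g_{\tilde\Sigma}=\g$, so $\tilde\Sigma\cong\Sigma_\g$ and the constructed action is the desired one.

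The main obstacle is the passage from the topological/combinatorial datum to the analytic realisation: the torsion-freeness of $K$ (which is what makes $\tilde\Sigma$ a genuine closed surface rather than an orbifold) is exactly where the order condition $\mathrm{ord}(c_j)=m_j$ built into the definition of a generating vector becomes indispensable, and one must invoke the existence of a Fuchsian group of the prescribed signature, forcing the low-genus, few-branch-point signatures (the spherical and Euclidean cases) to be handled separately via $S^2$ and $\mathbb{C}$. The remaining ingredients---the covering-theoretic correspondence and the Euler-characteristic bookkeeping---are routine once this analytic step is secured.
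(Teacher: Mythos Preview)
The paper does not supply its own proof of this proposition; it is stated as a classical result with a pointer to \cite[Proposition 2.1]{Broughton}, so there is no in-paper argument to compare against. Your proof is the standard one via the dictionary between branched regular $G$-covers and epimorphisms from the orbifold fundamental group (equivalently, a Fuchsian group of the given signature) onto $G$, and it is correct. The key step---that $\ker\bar\theta$ is torsion-free precisely because each $c_j$ has order exactly $m_j$---is identified and handled correctly, and you rightly flag that the non-hyperbolic signatures must be treated separately. This is essentially the argument one finds in Broughton and in the standard references, so your approach aligns with what the paper is citing.
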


\subsection{Minimal triangulations of orientation preserving actions on surfaces}

We take as a starting point the famous formula of Jungermann and Ringel that gives the minimal number of
vertices in a triangulation of a closed surface in terms of its Euler characteristic.

\begin{theorem}[Jungerman and Ringel]\label{thm:J-R}
Let $S$ be a closed surface different from the orientable surface of genus $2$ (M2), the Klein bottle (N2)
and the non-orientable surface of genus $3$ (N3). There exists  a triangulation of $S$ with
$n$ vertices if, and only if
$$
n\ge\left\lceil \frac{7 + \sqrt{49- 24\,\chi(S)}}{2} \right\rceil\,.
$$	
If $S$ is any of the exceptional cases (M2), (N2) or (N3), then the right-hand side of the formula
should be replaced by
$ \left\lceil\frac{7 + \sqrt{49- 24\chi(S)}}{2} \right\rceil +1$.
\end{theorem}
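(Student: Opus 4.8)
The plan is to prove the biconditional in two halves: the \emph{necessity} that any triangulation of $S$ uses at least the stated number of vertices, and the \emph{existence} of a triangulation with exactly $n$ vertices for every $n$ above this bound. The necessity is an elementary counting argument, whereas the existence is the substantial part and rests on the theory of triangular embeddings of complete graphs.

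For necessity, I would suppose $S$ carries a triangulation with $V$ vertices, $E$ edges and $F$ triangles. Since $S$ is a closed surface, each edge lies on exactly two triangles, so $3F = 2E$; together with the Euler relation $V - E + F = \chi(S)$ this gives $E = 3\bigl(V - \chi(S)\bigr)$. As the $1$-skeleton of a simplicial complex has at most one edge joining any two vertices, we also have $E \leq \binom{V}{2}$. Combining the two yields
\[
V^2 - 7V + 6\chi(S) \geq 0,
\]
whose larger root is $\tfrac{1}{2}\bigl(7 + \sqrt{49 - 24\chi(S)}\bigr)$. Since $V$ is a positive integer, $V \geq \bigl\lceil \tfrac{1}{2}(7 + \sqrt{49 - 24\chi(S)})\bigr\rceil$, which is the asserted lower bound.

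For existence, I would first observe that the problem reduces to realizing the \emph{minimal} value $n_0 := \bigl\lceil \tfrac{1}{2}(7 + \sqrt{49 - 24\chi(S)})\bigr\rceil$. Indeed, given any triangulation, inserting a new vertex in the interior of a triangle and joining it to the three original vertices changes $(V, E, F)$ by $(1, 3, 2)$, hence preserves $\chi$ and yields a triangulation with one additional vertex; iterating this move produces triangulations with every $n \geq n_0$. So everything comes down to constructing a triangulation on exactly $n_0$ vertices.

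The construction of this minimal triangulation is the main obstacle. When the inequality above is tight one has $E = \binom{n_0}{2}$, so the $1$-skeleton must be the complete graph $K_{n_0}$ and the triangulation is precisely a triangular embedding of $K_{n_0}$ in $S$; the existence of such embeddings is exactly the content of the Map Color Theorem, proved case by case according to the residue of $n_0$ modulo $12$ by means of rotation systems and index-one current graphs. When the ceiling strictly raises the bound one instead needs a near-triangular embedding of $K_{n_0}$ with a controlled number of non-triangular faces, which is where the delicate orientable and non-orientable constructions of Jungerman and Ringel enter. Finally, the three exceptional surfaces (M2), (N2) and (N3) are genuine exceptions precisely because the required (near-)triangular embedding of the complete graph provably does \emph{not} exist on them; for these one argues directly that $n_0$ vertices are impossible and exhibits a triangulation on $n_0 + 1$ vertices, which accounts for the $+1$ correction.
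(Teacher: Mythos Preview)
The paper does not prove this theorem at all: it is quoted as the classical result of Jungerman and Ringel and used as a black box (``We take as a starting point the famous formula of Jungermann and Ringel\ldots''). So there is no ``paper's own proof'' to compare against.

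That said, your outline is an accurate summary of how the theorem is actually established in the original sources. The necessity direction via $3F=2E$, $V-E+F=\chi(S)$, and $E\le\binom{V}{2}$ is exactly the standard Heawood-type bound. Your reduction of existence to the minimal case by vertex splitting is also correct. Where you should be careful is in the phrasing of the existence half: the Map Color Theorem and the Jungerman--Ringel constructions together constitute hundreds of pages of case-by-case current-graph arguments, so what you have written is a roadmap rather than a proof. In the context of this paper that is entirely appropriate, since the authors themselves treat the result as an input; but if you intend this as a self-contained argument you would need to cite Ringel's \emph{Map Color Theorem} and the Jungerman--Ringel papers explicitly for the constructive half and for the verification that (M2), (N2), (N3) are genuine exceptions.
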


For orientable surfaces we have $\chi(\Sigma_g)=2-2g$, so we let
$n_g:=\left\lceil \frac{7 + \sqrt{1+48 g}}{2} \right\rceil$ for $g\ne 2$ and $n_2:=10$.
Our goal is to obtain a similar formula that estimates the minimal number of  orbits of vertices of a
triangulation of $\Sigma_\g$ which admits a simplicial, regular and orientation-preserving action of
a group $G$. The basic idea
is quite simple. Let $\Sigma_{\g^\prime}$ be the orbit space $\Sigma_\g/G$ and let $K^\prime$ be its minimal
triangulation. If the action of $G$ is free, then the projection $\pi: \Sigma_\g \to
\Sigma_{\g^\prime}= \Sigma_\g/G$ is a regular covering, so it is enough to lift a minimal triangulation
$K^\prime$ of $ \Sigma_{\g^\prime}$. If the action $G$ is not free, then in order to lift $K^\prime$
to a regular invariant triangulation $K$ of $\Sigma_g$ we have to check a compatibility condition
given by the Riemann-Hurwitz formula.

Let $H$ be a non-trivial subgroup of $G$. Then the action of $H$ has a non-trivial fixed point set
$ (\Sigma_\g)^H$ if, and only if $H$ is a cyclic. Moreover, the set${\Sigma_\g^{(H)}}$ is a union of
finitely many isolated orbits, therefore ${\Sigma^{(H)}}^*$ consists of a
finite number of points. Since the action is regular, ${\Sigma^{(H)}}^*$ is a subcomplex of any lift
$K$ of $K^\prime$.  Consequently, the first condition of compatibility is satisfied if every
$x^\prime \in \mathcal{S}$ is a vertex of the triangulation $K^\prime$, i.e.
$\mathcal{S} \subset (K^\prime)^{(0)}$.

If $v_1^\prime, \,v_2^\prime$ are two vertices of $K^\prime$ with
$<v_1^\prime, \,v_2^\prime>\in (K^\prime){(1)}$ an edge joining them, and if at least one of the vertices
is not in $\mathcal{S}$, then there is a lift of $<v_1^\prime, \,v_2^\prime>$ to an invariant $1$-dimensional
subcomplex of the saturation $G <v_1, \,v_2>$  of any lift $<v_1, \,v_2>$ of $<v_1^\prime, \,v_2^\prime>$. If
$v_1^\prime, \,v_2^\prime \in \mathcal{S} $, then a lift of $K^\prime$ is not a regular $G$-complex in general.

\begin{lema}\label{sufficient for lift}
Let $K^\prime$ be a triangulation of $\Sigma_{\g^\prime}$ such that $\mathcal{S} \subset (K^\prime)^{(0)}$,
and assume that the vertices $v_i, v_j \in \mathcal{S}$ do not span a simplex in $K^\prime$.
Then there exists a lift of $K^\prime$ to a triangulation  $K=\pi^{-1}(K^\prime)$ of $\Sigma_\g$ in which
the action of $G$ is simplicial and regular.
\end{lema}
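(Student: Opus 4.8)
The plan is to set $K := \pi^{-1}(K^\prime)$, with cells the connected components of the preimages of the open simplices of $K^\prime$, and to verify directly that this is a simplicial complex triangulating $\Sigma_\g$ on which $G$ acts simplicially and regularly in the sense of R1) and R2). Since the $G$-action commutes with the projection, $\pi\circ g=\pi$ for all $g\in G$, the group $G$ permutes the cells of $K$; hence the action is automatically simplicial once we know $K$ is a genuine simplicial complex, and the whole content is in establishing that and verifying regularity.

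First I would record the local structure of $\pi$. By the geometrization of Theorem \ref{thm:Edmonds} we may assume $G$ acts by conformal automorphisms, so $\pi$ is a branched covering branched precisely over $\mathcal{S}\subset(K^\prime)^{(0)}$. Over the interior of any simplex $\sigma^\prime$ of $K^\prime$ --- which consists entirely of regular points, since $\mathcal{S}$ lies in the $0$-skeleton --- the map $\pi$ is a trivial $m$-sheeted covering ($m=|G|$) whose sheets are permuted freely by $G$; in particular $G$ acts freely on the set of lifts of $\sigma^\prime$. At a point $x$ over a branch vertex $v^\prime\in\mathcal{S}$ the stabilizer $G_x$ is cyclic of order $m_j$, and in the conformal local model $z\mapsto z^{m_j}$ it permutes freely and transitively the $|G_x|$ lifts at $x$ of each simplex of $K^\prime$ incident to $v^\prime$.

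Next I would check that $K$ is a genuine simplicial complex, and this is where the hypotheses enter. Because no two vertices of $\mathcal{S}$ span a simplex, every simplex of $K^\prime$ has at most one branch vertex. For an edge $e^\prime=\langle v_1^\prime,v_2^\prime\rangle$ at most one endpoint, say $v_1^\prime$, is branched and $v_2^\prime$ is regular; the lifts of $e^\prime$ meeting a point $x_1$ over $v_1^\prime$ have opposite endpoints of the form $g x_2$ with $g\in G_{x_1}$ and $x_2$ a fixed lift of $v_2^\prime$, and since $G$ acts freely over the regular point $v_2^\prime$ these are $|G_{x_1}|$ distinct points. Hence no two lifts of $e^\prime$ share both endpoints, every lifted edge joins two distinct points, and an analogous argument at the single branch vertex of a triangle shows that the lifts of a $2$-simplex have three distinct vertices and are determined by them. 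So $K$ has no loops, no double edges and no repeated triangles, and being the full preimage of a triangulation of $\Sigma_{\g^\prime}$ it is a simplicial complex with $|K|=\Sigma_\g$. I expect this verification to be the main obstacle, and it is exactly what forces the non-adjacency hypothesis: if $v_1^\prime,v_2^\prime\in\mathcal{S}$ were joined by an edge $e^\prime$, then for a lift with endpoints $x_1,x_2$ any non-trivial $h\in G_{x_1}\cap G_{x_2}$ --- which can occur for an orientation-preserving action, e.g. a rotation fixing two points --- would carry a lift $a$ of $e^\prime$ to a distinct lift $ha$ with the same endpoints $x_1,x_2$, producing a double edge and destroying the simplicial structure.

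Finally I would verify regularity. For R1), the previous step shows that $\pi$ restricts to an injection on the vertices of any simplex of $K$, so if $v$ and $gv$ both lie in a simplex then $\pi(v)=\pi(gv)$ forces $v=gv$. For R2), suppose $\sigma=\langle x_0,\dots,x_n\rangle$ and $\langle g_0x_0,\dots,g_nx_n\rangle$ are both simplices of $K$; applying $\pi\circ g=\pi$ they project to the same simplex $\sigma^\prime=\pi(\sigma)$, hence both are lifts of $\sigma^\prime$. Since $G$ acts freely on the lifts of $\sigma^\prime$, there is a unique $g\in G$ with $g\sigma=\langle g_0x_0,\dots,g_nx_n\rangle$, and comparing for each $i$ the unique vertex of this simplex lying over $\pi(x_i)$ (using R1)) yields $gx_i=g_ix_i$ for all $i$, which is exactly R2). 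This completes the plan.
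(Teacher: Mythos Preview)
Your argument is correct and follows the same overall strategy as the paper: define $K=\pi^{-1}(K')$ and use the hypothesis that each simplex of $K'$ meets $\mathcal{S}$ in at most one vertex to verify that the lift is a regular simplicial $G$-complex. Your presentation is in fact somewhat cleaner --- you take the cells of $K$ to be the connected components of preimages of open simplices and check R1), R2) directly from the free transitive action of $G$ on the lifts of each positive-dimensional simplex, whereas the paper builds the lift simplex by simplex via an auxiliary $G$-invariant Riemannian metric and a ``closest lift'' choice and is sketchier in the regularity verification.
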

\begin{proof}
The existence of a lift is clear for the vertices. The lift of all them is a union of orbits.  Consider a
$G$-invariant Riemannian  metric  on $\Sigma_\g$.  It induces a Riemaniann metric on $\Sigma_\g^\prime$ thus
it can be considered as a lift of the latter. If $e^\prime$ is an edge joining the vertices
$v_1^\prime, \, v_2^\prime$, then we can lift $v_1^\prime$ to $ v_1\in \Sigma_\g$ and take as a lift $v_2$
of $v_2^\prime$ the closest point of the orbit $\pi^{-1}(v^\prime_2)$. Define a lift of
$<tv^\prime_1,(1-t) v^\prime_2>$ as $<tv_1,(1-t) v_2>$. This gives a lift of $e^\prime $ to an edge $e$
joining $v_1$ and $v_2$.  Its orbit $G \,e = G\, <v_1, v_2>$ is $G$-homeomorphic to
$ G \times_G <v_1, v_2>$. The latter is a disjoint union of $|Gw$ edges if $ G_{v_1}= G_{v_2}= {\bf e}$.
If $G_{v_i} = \bz_m$ for  one of $i=1, 2$, then it is a union of $\vert G\vert $ intervals grouped in
$\vert G/G_{v_i} \vert $ disjoint sub-collections each consisting of $\vert G_{v_i} \vert $ edges intersecting
at a single point of the orbit of $v_i$. This defines a  triangulation $\pi^{-1}(<v^\prime_1, v^\prime_2>)$
in which the action is simplicial and regular. Applying this procedure consecutively to all edges of
$K^\prime$ we get a graph $K^{(1)}$ on which the action of $G$ is simplicial and regular.

Now we have to lift triangles of $K^\prime$. If $\Delta^\prime = <v^\prime_1, v^\prime_2, v^\prime_3>$ is a
triangle of $K^\prime$, then at most one of the vertices has a non-trivial  isotropy group by our assumption.
If all $ v^\prime_i \in \Sigma^\prime \setminus \mathcal{S}$ then  $\pi$ over $\Delta $ is a regular covering,
thus there exists a unique lift $\Delta=<v_1, v_1,v_3>$ of $\Delta^\prime$ such that
$\pi^{-1}(\Delta^\prime)$ $G$-homeomorphic to $G \times \Delta$ as a $G$-set. If for
$\Delta^\prime = <v^\prime_1, v^\prime_2, v^\prime_3>$ the isotropy group of one vertex is nontrivial, say
$G_{v_1} = \bz_m$, then we have the lifts of the edges $<v^\prime_1, v^\prime_2>$ and
$<v^\prime_1, v^\prime_3>$  to $<v_1, v_2>$, and $v_1,v_3.$ respectively,  by the procedure described above.
The edge $<v^\prime_2, v^\prime_3>$ has a unique lift $<v_2,v_3>$ because $\pi$ is a regular covering over
$<v^\prime_2, v^\prime_3>$. This gives  a lift  of $\Delta^\prime$ to $\Delta = <v_1,v_2,v_3>$. In this case
the saturation $G \Delta$ is $G$-homeomorphic to $G\times\Delta $, a union of $\vert G\vert $
triangles that are divided in $\vert G/G_{v_1} \vert $ disjoint sub-collections each consisting of
$\vert G_{v_i} \vert $ triangles intersecting at a single point of the orbit of $v_1$. Moreover, the group
 $G_{v_1}$  acts trivially on all triangles that meet at this point.

Consequently we get a triangulation of $\Sigma_\g$ in which the action of $G$ is simplicial and regular
(even strictly regular), and the induced quotient triangulation $K/G$ is equal to $K^\prime$.
\end{proof}

Next we show the following lemma about a homogeneity of  triangulations of a  manifold with respect
to the position of vertices.

\begin{lema}\label{homogenity of triangulation}
Let $M$ be a smooth manifold of dimension $d \geq 2$  and $K$ a triangulation of $M$ with $n$ vertices
$v_1, \, v_2, \, \dots,\, v_n$. Then for every set $x_1, \, x_2,  \, \dots\,,x_n \in M $
of pairwise pairwise different points in M there exists a triangulation  $K^\prime$ of $M$ with vertices
$\{x_1, \, x_2,  \, \dots\,,x_n \}$ such that $K^\prime$ is isomorphic to $K$.
\end{lema}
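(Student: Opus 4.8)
The plan is to reduce this combinatorial statement to a purely differential-topological one: it suffices to produce a self-homeomorphism $\Phi\colon M\to M$ with $\Phi(v_i)=x_i$ for $i=1,\dots,n$. Indeed, if $h\colon |K|\to M$ is the homeomorphism realizing the triangulation $K$, then $\Phi\circ h\colon |K|\to M$ is again a homeomorphism and hence triangulates $M$ by the very same abstract complex $K$, but now the image of the vertex $v_i$ is $\Phi(h(v_i))=x_i$. Taking $K^\prime$ to be this retriangulation (which equals $K$ as an abstract simplicial complex, so is trivially isomorphic to it) gives a triangulation of $M$ whose set of vertices is exactly $\{x_1,\dots,x_n\}$, as required. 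Thus the entire content lies in constructing $\Phi$.

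To build $\Phi$ I would invoke the transitivity of the diffeomorphism group on configurations of finitely many points. Consider the ordered configuration space $F_n(M)=\{(p_1,\dots,p_n)\in M^n\mid p_i\neq p_j \text{ for } i\neq j\}$. Since $M$ is connected and $\dim M=d\geq 2$, deleting finitely many points from $M$ leaves it path-connected, and an easy induction on $n$ (fibering $F_n(M)$ over $F_{n-1}(M)$ with fiber $M$ minus $n-1$ points) shows that $F_n(M)$ is path-connected. Hence there is a path $\gamma\colon[0,1]\to F_n(M)$, $\gamma(t)=(p_1(t),\dots,p_n(t))$, from $(v_1,\dots,v_n)$ to $(x_1,\dots,x_n)$ along which the $n$ points never collide. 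This path is precisely an isotopy of the embedding of the compact $0$-manifold $\{1,\dots,n\}$ into $M$, whose trajectories sweep out a compact subset of $M$, so by the isotopy extension theorem (Thom--Palais) it extends to a compactly supported ambient isotopy $\Phi_t\colon M\to M$ with $\Phi_0=\mathrm{id}_M$ and $\Phi_t(v_i)=p_i(t)$. Setting $\Phi:=\Phi_1$ yields the desired homeomorphism with $\Phi(v_i)=x_i$.

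The hypothesis $d\geq 2$ enters exactly at the connectivity step: in dimension one the configuration space breaks into components that remember the (linear or cyclic) order of the points, so one cannot freely carry an arbitrary $n$-tuple to another and the statement fails. I therefore expect path-connectedness of $F_n(M)$, together with the isotopy extension, to be the only substantive input; the rest is formal. Two small points deserve care. First, $K$ need only be a topological triangulation, so I work throughout with homeomorphisms; producing $\Phi$ in the smooth category is harmless, since every diffeomorphism is a homeomorphism, and it lets me cite the smooth isotopy extension theorem directly. Second, if one prefers avoiding isotopy extension, one may move the points one at a time: using the classical homogeneity lemma (as in Milnor's \emph{Topology from the Differentiable Viewpoint}) one finds, for each consecutive configuration, a diffeomorphism supported in a small ball that drags a single point along a short arc avoiding the others, and composes finitely many such diffeomorphisms — again relying on $d\geq 2$ to route each arc around the remaining points.
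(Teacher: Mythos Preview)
Your proposal is correct and follows essentially the same approach as the paper: both reduce the lemma to the $n$-transitivity of the diffeomorphism group of a connected smooth manifold of dimension $\geq 2$, and then compose the triangulating map with the resulting diffeomorphism. The only difference is that the paper simply cites this $n$-transitivity as a known fact (Michor--Vizman), whereas you sketch its proof via connectivity of the configuration space $F_n(M)$ together with isotopy extension (or alternatively via Milnor's homogeneity lemma).
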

\begin{proof} The statement follows from  the fact that  a smooth manifold of $\dim \geq 2$
is $n$-tuples homogeneous for every $n\geq 1$ (cf. \cite{Mich-Viz}). This means that  for every two pairs of
$n$-tuples $\{y_1, y_2, \, \dots, y_n\}$ and $\{x_1, x_2, \, \dots\,, x_n\}$ points of  $M$
there exists a diffeomorphism $\phi: M \to M$ such that
$\phi(y_j)= x_j$. It is enough to take as $y_i\in M$ the images of $v_i \in K$ and the composition of the
triangulating  map with $\phi$.
\end{proof}

Before the formulation of the main theorem of this subsection we need new notation.
Let $\Sigma_{\g^\prime}=\Sigma_\g/G$ be the orbit surface of an orientation-preserving action of a group $G$ with the
branching data $(\g^\prime, r,  m_1, \dots , m_r,)$.
Let ${\bf n} = \max\{n_{\g^\prime}, r\}$,  and let $K^\prime({\bf n}) $ be a  triangulation of
$\Sigma_{\g^\prime}$  with ${\bf n}$ vertices given by Theorem \ref{thm:J-R}.

\begin{theorem}\label{thm:surface minimal invariant triangulation}
Let $\Sigma_\g$  be an oriented surface of genus $\g$ with an orientation-preserving action by a finite
group $G$. Denote by $\Sigma_{\g^\prime} = \Sigma_\g/G $ the quotient surface and let
$r= \vert \mathcal{S}\vert $ be the number of singular fibers of the projection $\pi:\Sigma_\g \to
\Sigma_{\g^\prime}$. Moreover, let be $n_{\g^\prime}$ the number  defined in Theorem \ref{thm:J-R}, and
let ${\bf n} = \max(n_{\g^\prime}, r)$.
	
Then we have the following estimate for the number  of orbits of minimal regular $G$-triangulation $K$
of $\Sigma_\g$:
$$  {\bf n}\leq  f_{G,0}(K) \leq {\bf n} +  \binom{r}{2} + \binom{r}{3}
 \,, $$
with a convention that $\binom{r}{i}=0$ if $0\leq r < i$.

\zz{And  the number of its vertices   is estimated by
$$  \sum_{j=1}^r \,\frac{m}{m_j} \,\, + (n_{\g^\prime}-r) \, m \;\leq f_0(K)\leq  \; \sum_{j=1}^r \,\frac{m}{m_j} \,\, + (n_{\g^\prime}-r +n_{\g^\prime}^1(r)) \, m\,, \;\;{\text{if}} \;\; r \leq  n_{\g^\prime}\,,\;\;{\text{or}} $$
$$  \sum_{j=1}^{r} \,\frac{m}{m_j} \;\leq f_0(K)\leq  \; \sum_{j=1}^{r} \,\frac{m}{m_j} + n_{\g^\prime}^1(r) \, m \;\; {\text{if}}\;\; r > n_{\g^\prime}\,. $$}
\end{theorem}

\begin{proof}[Proof of Theorem \ref{thm:surface minimal invariant triangulation}]
Let $K^\prime({\bf n}) $ be a  triangulation of $\Sigma_{\g^\prime}$  with $n$ vertices given by
Theorem \ref{thm:J-R}. Using Lemma \ref{homogenity of triangulation} we can assume that  $r$ vertices of
$K^\prime({\bf n})$  are $r$-points of $\Sigma_{\g^\prime}$ over which $\pi$ is branched. We have to  expand
$K^\prime({\bf n}) $ to a triangulation $K^{\prime\prime}({\bf n}) \supset K^\prime({\bf n}) $ satisfying the
assumption of Lemma  \ref{sufficient for lift} by adding extra vertices, edges and triangles if necessary.

Let $K^\prime_{min}({\bf n}) \subset K^\prime({\bf n})$ be the minimal  subcomplex of $K^\prime({\bf n})$
containing all   $r$-points of $\Sigma_{\g^\prime}$ over which $\pi$ is branched, i.e. the subcomplex spanned
by all such vertices.

We first assume that $r> 3$. If $r<n_{\g^\prime}$ then $K^\prime_{min}({\bf n}) $ is a proper subcomplex of
$ K^\prime({\bf n})$, if $r\geq n_{\g^\prime}$ then $K^\prime_{min}({\bf n}) = K^\prime({\bf n})$.
Consider the barycentric subdivision $ L:=(K^\prime_{min}({\bf n}))^\prime $  of $K^\prime_{min}({\bf n})$
and observe that all new vertices of $L$ , i.e. the midpoints of edges and barycentres of triangles are in
$ \Sigma_{\g^\prime} \setminus \mathcal{S}$. If $r\geq n_{\g^\prime}$ then we take
$ K^{\prime\prime}({\bf n})= L$.

If $r<n_{\g^\prime}$ then we have to expand the simplicial complex structure of $L$  to the required complex
structure $ K^{\prime\prime}({\bf n}) $ of $\Sigma_{\g^\prime}$. Let
$v_0 \in K^\prime({\bf n}) \setminus K^\prime_{min}({\bf n}) $ and $\Delta = <v_0, v_1, v_2> $ be a triangle
containing it. If $  \Delta \cap K^{\prime\prime}({\bf n}) = \emptyset $ or
$\Delta \cap K^{\prime\prime}({\bf n}) = \{v_i\}$,  $ i=1, 2 $ then we leave $\Delta$ not modified.
On the other hand, if $ \Delta \cap K^{\prime\prime}({\bf n}) = <v_1, v_2>$, then the barycentre
$\tilde{v}=(\frac{1}{2}v_1, \frac{1}{2} v_2)$ is a vertex of $L$. In this case we add to $K^\prime({\bf n})$
the edge $<v_0, \tilde{v}>$ and consequently two new triangles for which it is the edge.
In this way we extend the simplicial structure of $L$ to a new triangulation $K^{\prime\prime}({\bf n})$ of
$\Sigma_{\g^\prime}$ satisfying the assumption of Lemma  \ref{sufficient for lift}.
Observe, that  for this triangulation we add new vertices only for the barycentric subdivision $L$ of
$K^\prime_{min}({\bf n})$. But the number of vertices of $L$ is equal to the number of vertices of
$K^\prime_{min}({\bf n})$, number of its edges, and number of its faces, which is  smaller or equal to  $r$,
$\binom{r}{2}$, and  $\binom{r}{3}$ respectively. This gives the upper  estimate of statement if
$r\geq n_{\g^\prime}$. If $r < n_{\g^\prime}$ then we have to add remaining $n_{\g^\prime} - r$ vertices of
$K^\prime({\bf n})$ which gives the upper  estimate by
$ (n_{\g^\prime} - r) + r +  \binom{r}{2} + \binom{r}{3} = {\bf n} +  \binom{r}{2} + \binom{r}{3}$.
It shows the estimate from above.

If $r=0$ then the projection $\pi: \Sigma_\g \to \Sigma_{\g^\prime}$ is a regular cover, and the inequality
becomes the equality. Similarly, if $r=1$ then $K^\prime_{min}({\bf n})$ is a point and the condition of
Lemma \ref{sufficient for lift} is satisfied for $K^\prime({\bf n})$, so that once more for  the upper
 estimate holds.

If $r=2$ then $K^\prime_{min}({\bf n})$ is either  a union of two points and the condition of
Lemma \ref{sufficient for lift} is satisfied, or an edge then we have to add only its center to get $L$.
Consequently, the inequality of statement holds also in this case. Finally, if $r=3$ then
$K^\prime_{min}({\bf n})$ is either the union of three points, the union of a point and an interval, or
the connected union of two edges, or a triangle. Consequently to get $L$ we have to add  to
$K^\prime_{min}({\bf n})$ either one point or two points, or four points   $= \binom{3}{1} + \binom{3}{2}$
points which once more confirms the statement.

The estimate from below is obvious, since a simplicial regular action on $\Sigma_\g$ induces a triangulation of
$\Sigma_{\g^\prime}$.
\end{proof}

\begin{proposition}\label{estimates of ct_g for surfaces}
With the same notation and assumptions as in Theorem \ref{thm:surface minimal invariant triangulation} a
similar (but weaker)  estimates hold for $\ct_G(\Sigma_\g)$, assuming that $g^\prime\ne 2$:
$$ n_{\g^\prime} \leq \ct_G(\Sigma_\g) \leq {\bf n} +  \binom{r}{2} + \binom{r}{3}
 \, . $$
Moreover, if $g^\prime=2$, then $\ct_G(\Sigma_g)\ge 9$.
\end{proposition}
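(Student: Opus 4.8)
The plan is to obtain both inequalities by reducing to results already established, so that no new geometric construction is required. For the upper bound I would feed the triangulation produced in the proof of Theorem \ref{thm:surface minimal invariant triangulation} directly into the vertex estimate of Proposition \ref{prop:estimate vertices by covering type}. That theorem yields a regular $G$-triangulation $K$ of $\Sigma_\g$ with $f_{G,0}(K)\le {\bf n}+\binom{r}{2}+\binom{r}{3}$ orbits of vertices, and since $|K|=\Sigma_\g$ we immediately get
$$\ct_G(\Sigma_\g)\le \sct_G(|K|)\le \Delta^*(K)=f_{G,0}(K)\le {\bf n}+\binom{r}{2}+\binom{r}{3},$$
which is exactly the claimed upper estimate. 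This half is purely formal.

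For the lower bound the idea is to pass to the orbit surface. By Corollary \ref{coro:ct(X^*) < ct_G(X)} we have $\ct(\Sigma_\g/G)\le \ct_G(\Sigma_\g)$, and $\Sigma_\g/G=\Sigma_{\g^\prime}$ is a \emph{closed orientable} surface because the action preserves orientation. It then suffices to invoke the computation of the non-equivariant covering type of closed surfaces (Karoubi--Weibel \cite{K-W}, see also \cite{GovcMarzantowiczPavesic2019}), namely
$$\ct(\Sigma_{\g^\prime})=\Bigl\lceil \frac{7+\sqrt{1+48\g^\prime}}{2}\Bigr\rceil .$$
For $\g^\prime\ne 2$ the right-hand side is precisely $n_{\g^\prime}$, so $n_{\g^\prime}\le \ct_G(\Sigma_\g)$; for $\g^\prime=2$ it equals $\lceil (7+\sqrt{97})/2\rceil=9$, giving $\ct_G(\Sigma_\g)\ge 9$. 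Note that this reduction only records the genus $\g^\prime$ of the quotient and is blind to the number $r$ of branch points, which is exactly why the lower bound weakens from ${\bf n}=\max(n_{\g^\prime},r)$ in Theorem \ref{thm:surface minimal invariant triangulation} to the smaller value $n_{\g^\prime}$ here.

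The only genuinely delicate point — and the reason the statement isolates $\g^\prime=2$ — is that the covering type of a surface is governed by the \emph{unexceptional} Jungerman--Ringel formula, whereas the triangulation number $n_{\g^\prime}$ carries the exception $n_2=10$. Concretely, $\Sigma_2$ admits a good cover (a homotopy triangulation) with $9$ elements even though it possesses no genuine simplicial triangulation on $9$ vertices, so the bound transmitted from $\ct(\Sigma_{\g^\prime})$ is $9$ rather than $10$ when $\g^\prime=2$. Thus the sole substantive ingredient beyond Theorem \ref{thm:surface minimal invariant triangulation} is the exact value of $\ct(\Sigma_{\g^\prime})$, and I expect this to be where care is needed; everything else follows formally from the monotonicity of covering type under passage to the quotient and from the vertex bound of Proposition \ref{prop:estimate vertices by covering type}.
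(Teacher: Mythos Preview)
Your approach is essentially identical to the paper's: the upper bound is obtained from $\ct_G(X)\le f_{G,0}(K)$ applied to the regular $G$-triangulation produced in Theorem \ref{thm:surface minimal invariant triangulation}, and the lower bound from $\ct_G(\Sigma_\g)\ge\ct(\Sigma_{\g'})$ via Corollary \ref{coro:ct(X^*) < ct_G(X)} together with the known value of $\ct(\Sigma_{\g'})$. The only discrepancy is bibliographic: the paper attributes the equality $\ct(\Sigma_2)=9$ to Borghini--Minian \cite{Bor-Min} rather than to \cite{K-W} or \cite{GovcMarzantowiczPavesic2019}, so you should adjust that citation.
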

\begin{proof} The lower estimates follow from the inequality
$ \ct_G(\Sigma_\g)\geq \ct(\Sigma_{\g^\prime}) = n_{\g^\prime}$ if $\g^\prime \neq 2$.
If $\g^\prime =2$, then by \cite[Proposition 3.4]{Bor-Min}we have
$\ct(\Sigma^\prime_{\g^\prime}) = 9$.

The upper estimate follows from $\ct_G(X) \leq f_{G,0}(K)$ for any regular triangulation $K$
of $X$.
\end{proof}

We complete this section with a comparison of the values of the $G$-covering type and  Lusternik-Schnirelman $G$-category for
an orientation-preserving action of $G$ on a surface $\Sigma_\g$. We have

\begin{theorem}\label{G-category of surfaces}(\cite[Theorem 4.11]{GroJezMar})
Let a finite group $ G $ act  on $X=\Sigma_\g$ through orientation-preserving homeomorphisms. Then the set of all points  is either the union
of $r \geq  1 $ orbits or empty and we have: either $\cat_G(X) = cat(X/G) = 3$ and $\g^\prime >0$ if the
action is free, or
$$\cat_G(X) =
\begin{cases}\; r\;\; {\text{ if}}\;\;  r \geq 3\,,\;\; {\text{or if}} \;\;r = 2\;\;
{\text{and}} \;\; \g^\prime = 0\,,\cr
\;3 \;\;{\text{if}}\;\; r \leq 2 \;\; {\text{and}}\;\; \g^\prime \geq 1\,,\;\;  \end{cases}\;\;$$
if the action is not free.
\end{theorem}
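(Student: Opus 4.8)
The plan is to prove the identity
$$\cat_G(X)=\max\{r,\cat(\Sigma_{\g^\prime})\},$$
where $\cat_G$ is taken here with respect to the family of \emph{all} orbit types (so that it stays finite even when a cyclic isotropy group fixes a point, reconciling the finiteness of the answer with the existence of fixed points), and then to read off the stated case distinction using $\cat(\Sigma_{\g^\prime})=3$ for $\g^\prime\ge 1$ and $\cat(S^2)=2$. By the Geometrization Theorem \ref{thm:Edmonds} I may assume $G$ acts conformally, so every nontrivial isotropy group is finite cyclic with isolated, hence discrete, fixed-point set; the singular set is a disjoint union of $r$ orbits $O_1,\dots,O_r$; and $\pi\colon X\to X^\ast=X/G=\Sigma_{\g^\prime}$ is a branched cover with branch points $y_1,\dots,y_r$.

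For the lower bound I would establish two inequalities. First, $\cat_G(X)\ge\cat(X^\ast)$: given a $G$-categorical cover $\{U_j\}$ with each $U_j$ contracting $G$-equivariantly to an orbit $Gx_j$ via a $G$-homotopy $h^j_t$, the open images $\{\pi(U_j)\}$ cover $X^\ast$, and since $h^j_t$ is $G$-equivariant it is constant on orbits and therefore factors through the open (hence quotient) surjection $\pi|_{U_j}$ to a homotopy contracting $\pi(U_j)$ onto the single point $\pi(Gx_j)$; thus each $\pi(U_j)$ is categorical in $X^\ast$. Second, $\cat_G(X)\ge r$: if a $G$-categorical $U$ contains a singular point $x$ with nontrivial isotropy $H=G_x$, then for its contracting $G$-homotopy the path $t\mapsto h_t(x)$ lies in $X^H$ by equivariance; as $X^H$ is discrete this path is constant, so $x=h_1(x)\in Gx_j$, the target orbit of $U$. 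Hence all singular points of $U$ lie in its single target orbit, so $U$ meets at most one of the $O_k$; since each $U_j$ is $G$-invariant, every $O_k$ lies entirely in some $U_j$, and distinct $O_k$ force distinct $U_j$, whence $N\ge r$. Combining gives $\cat_G(X)\ge\max\{r,\cat(\Sigma_{\g^\prime})\}$.

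For the upper bound I would build the cover downstairs and lift it. With $N=\max\{r,\cat(\Sigma_{\g^\prime})\}$, I use the homogeneity of $\Sigma_{\g^\prime}$ (Lemma \ref{homogenity of triangulation}) to position the branch points and construct $N$ contractible open sets $V_1,\dots,V_N$ covering $\Sigma_{\g^\prime}$, each containing at most one branch point, with every $y_k$ lying in exactly one $V_j$; this is possible precisely because $N\ge r$ gives enough sets to separate the branch points and $N\ge\cat(\Sigma_{\g^\prime})$ gives enough contractible pieces to cover. Setting $U_j=\pi^{-1}(V_j)$: a branch-free $V_j$ is evenly covered, so $U_j$ is a disjoint union of $|G|$ contractible sheets permuted by $G$ and $G$-deformation retracts onto a free orbit; a $V_j$ containing a single $y_k$ has, near $y_k$, the local model of the $z\mapsto z^{m_k}$ cover, so the radial retraction of $V_j$ to $y_k$ lifts to a $G$-deformation retraction of $U_j$ onto $O_k$. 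This exhibits a $G$-categorical cover of size $N$, giving $\cat_G(X)=\max\{r,\cat(\Sigma_{\g^\prime})\}$. In the free case $r=0$ the relation $2-2\g=|G|(2-2\g^\prime)$ forces $\g^\prime>0$ for nontrivial $G$, so $\cat_G(X)=\cat(\Sigma_{\g^\prime})=\cat(X/G)=3$; the remaining values are obtained by evaluating $\max\{r,\cat(\Sigma_{\g^\prime})\}$ case by case, which returns $r$ when $r\ge 3$ or when $r=2,\g^\prime=0$, and $3$ when $r\le 2,\g^\prime\ge1$.

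I expect the main obstacle to lie in the upper-bound construction rather than the lower bound. The two delicate points are the combinatorial arrangement of an \emph{optimal-size} contractible cover of $\Sigma_{\g^\prime}$ with the branch points separated one per set, which must be argued uniformly and is most subtle when $r$ is close to $\cat(\Sigma_{\g^\prime})$, and the equivariant lifting of the contraction \emph{across} a branch point, where one must pass through the local cone model of the branched cover to ensure the retraction is genuinely $G$-equivariant and lands on the full orbit $O_k$ rather than a proper subset of it. The lower bound, by contrast, reduces cleanly to the discreteness of the fixed-point sets $X^H$, which is exactly where the orientation-preserving, conformal hypothesis does its work.
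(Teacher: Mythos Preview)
The paper does not give its own proof of this theorem; it is stated as a direct citation of \cite[Theorem 4.11]{GroJezMar}, so there is no in-paper argument to compare your proposal against.

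That said, your outline is sound and is essentially the standard route to this result. The lower bound $\cat_G(X)\ge\cat(X/G)$ via projection and $\cat_G(X)\ge r$ via discreteness of $X^H$ for nontrivial cyclic $H$ are both correct and cleanly argued. Your observation that one must take $\cat_G$ with respect to the family of \emph{all} orbits (rather than the paper's default convention $\AA=\{G/H\mid H\neq G\}$) is well spotted: the cited source uses this broader convention, which is why the theorem gives finite values even when some isotropy group equals $G$ (e.g.\ a rotation of $S^2$). For the upper bound, the lifting step is fine: over a simply connected $V_j$ with at most one branch point the retraction to that point lifts uniquely, hence $G$-equivariantly, giving a $G$-deformation retraction of $\pi^{-1}(V_j)$ onto a single orbit. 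The only genuinely incomplete step is the one you flag yourself: exhibiting, for every $\g'$ and every placement of $r$ marked points, a cover of $\Sigma_{\g'}$ by exactly $\max\{r,\cat(\Sigma_{\g'})\}$ contractible open sets with the marked points separated one per set. This is not hard (for $\g'=0$ slice the sphere into $r$ lune-shaped regions when $r\ge 2$; for $\g'\ge 1$ start from a three-set categorical cover and, if $r>3$, subdivide one contractible piece into $r-2$ contractible pieces while keeping branch points separated), but it should be written out rather than asserted.
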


\begin{rema}
Note that if the action of $G$ on $\Sigma_\g$ is free, then $r=0$, and the estimate of Theorem \ref{thm:surface minimal invariant triangulation}
reduces to the equality $f_{G,0}(\Sigma_\g) = n_{\g^\prime}$. Since $n_{\g^\prime}=\ct(\Sigma_{\g^\prime})$ (except for the case $\g^\prime = 2$,
cf. \cite{Bor-Min}), we conclude $\ct_G(\Sigma_\g)=\ct(\Sigma_\g/G)= n_{\g^\prime}$.

Moreover, it is known that there is a free action of the cyclic group $G=\bz_m$ on $\Sigma_\g$ if, and only if $m$ divides $2-2\g$, and
then $2-2\g^\prime = \frac{2-2\g}{m}$. This shows that $\g^\prime$ and $n_{\g^\prime}$ can be arbitrary large if $\g$ is large.

On the other hand,  $\cat_G(\Sigma_\g)= \cat(\Sigma_{\g^\prime})  = 3 $ for every such $\g$, as follows from Theorem \ref{G-category of surfaces}.
\end{rema}

Observe, that for a surface $\Sigma_\g$ with an action  of $G$ such that $ r > n_{\g^{\prime}}$ Corollary  \ref{estimates of ct_g for surfaces} gives
$\ct_G(\Sigma_\g) \geq r = \cat_G(\Sigma_{\g^\prime})$, i.e., the rate of growth is at least linear in $r$.

\zz{However, we conjecture that if  $ r> n_{\g^\prime}$ then $\ct_G(\Sigma_\g)\geq \frac{\cat_G(\Sigma_\g) (\cat_G(\Sigma_\g))+1)}{2} $??}


\section{Estimate of $G$-covering type by $G$-genus}\label{Estimate of $G$-covering type by $G$-category and $G$-genus}

In this section we give an estimate from below of the equivariant covering type expressed in terms of the equivariant genus $\gamma_G$
analogous to the inequality of \cite[Thm 2.2]{GovcMarzantowiczPavesic2019} in which the covering type is estimated by
the Lusternik-Schirelmann category.

\begin{theorem}\label{thm:estimate by genus}
Let $X$ be  a $G$-space which has a structure of a $G$-complex (or, more generally, a $G$-CW complex). Assume that
the orbit types of the action of $G$ on $X$ are ordered linearly $(H_1) \geq (H_2) \geq \,\cdots\, \geq (H_k)$
(in particular, there is a unique  minimal orbit type). Then
$$ \ct_G (X) \,\geq\, \frac{1}{2} \, \gamma_G(X) \, (\gamma_G(X) +1)\,. $$
\end{theorem}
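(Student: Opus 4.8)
<br>

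The plan is to adapt the non-equivariant argument relating covering type to LS-category, as in \cite[Thm 2.2]{GovcMarzantowiczPavesic2019}. The key observation is that a good $G$-cover with few orbits forces the genus to be small, and conversely a large genus forces many cover orbits. Concretely, I would start from a regular good $G$-cover $\mathcal{U}$ of some $Y \overset{G}{\simeq} X$ realizing $\ct_G(X)$, split into orbits $\tilde U_1, \ldots, \tilde U_N$ with $N = \ct_G(X)$. Since the orbit types are linearly ordered with a unique minimal type, each $G$-contractible set $\tilde U_i$ deforms to an orbit, and by Proposition \ref{upper estimate of genus} (using the unique minimal orbit type) each such orbit admits a $G$-map to the minimal orbit $G/H_k$. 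This is precisely the hypothesis that makes the genus well-behaved and subadditive on the cover.

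The core estimate I would extract is that if $m$ of the cover elements have pairwise nonempty intersection in a certain nerve-theoretic sense, then the genus is controlled by $m$. The standard trick is this: group the $N$ orbits and use the nerve. Because $\mathcal{U}$ is a \emph{good} $G$-cover, by the equivariant Nerve Theorem (Theorem \ref{thm:G-nerve homotopy equivalence}) we have $Y \overset{G}{\simeq} |\mathcal{N}(\mathcal{U})|$, and the nerve $\mathcal{N}(\mathcal{U})$ is a simplicial $G$-complex on $N$ orbits of vertices. I would then show that a simplicial complex on $N$ vertices admits a cover/genus-type bound: a $G$-map from $|\mathcal{N}(\mathcal{U})|$ into a join of $\gamma_G(X)$ orbits must exist with $\gamma_G(X)$ bounded in terms of how the $N$ orbits can be partitioned into $G$-categorical pieces. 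The combinatorial heart is the inequality $N \geq \binom{\gamma_G(X)+1}{2} = \frac{1}{2}\gamma_G(X)(\gamma_G(X)+1)$, which comes from the fact that each categorical piece in a genus-realizing decomposition can absorb a number of vertices growing linearly, so that $\gamma_G(X)$ pieces account for at most a quadratic number of vertices.

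More precisely, following the non-equivariant template, I would partition the $N$ cover orbits by the dimension of the smallest face of the nerve they first appear in, or equivalently build $\gamma_G(X)$ nested unions $W_1 \subset W_2 \subset \cdots$ where each $W_{j}\setminus W_{j-1}$ is $G$-categorical with respect to the minimal orbit type. The genus subadditivity (Proposition \ref{genus estimates category 2}) gives $\gamma_G(X) \leq \sum_j \gamma_G(W_j \setminus W_{j-1})$, and one shows each categorical piece contributes genus $1$, so the number of pieces is at least $\gamma_G(X)$; then counting how many of the $N$ orbits are needed to build a nerve supporting $\gamma_G(X)$ genuinely distinct categorical levels yields the triangular-number bound. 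The main obstacle I anticipate is the equivariant bookkeeping: ensuring that the deformations witnessing $G$-categoricity respect the unique minimal orbit type so that genus (not merely category) is the controlling invariant, and verifying that the join target $A_1 * \cdots * A_{\gamma_G(X)}$ can be built from the nerve's vertex orbits without increasing the count. The linear ordering of orbit types is exactly what lets every intermediate orbit map down to $G/H_k$, so the combinatorial counting of the non-equivariant case transfers essentially verbatim once this compatibility is in place.
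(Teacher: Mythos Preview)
Your plan has the right ingredients --- a good $G$-cover realizing $\ct_G(X)$, the nerve, subadditivity of genus, and the linear ordering of orbit types --- but it never commits to the mechanism that actually produces the triangular number, and the passage ``counting how many of the $N$ orbits are needed to build a nerve supporting $\gamma_G(X)$ genuinely distinct categorical levels yields the triangular-number bound'' is where the real work is hiding. As written, your argument only shows that $X$ is covered by $\gamma_G(X)$ categorical pieces, which gives $N\ge \gamma_G(X)$, not $N\ge \binom{\gamma_G(X)+1}{2}$. You also invoke Proposition~\ref{upper estimate of genus} only to map every orbit to $G/H_k$; that is not its crucial role here.

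The missing idea is that Proposition~\ref{upper estimate of genus} gives the \emph{dimension bound} $\gamma_G(Y)\le \dim Y +1$. Applied to the nerve $K=\mathcal N(\mathcal U)$, this says that if $\gamma_G(X)=n+1$ then $\dim K\ge n$, so there are $n+1$ cover orbits $\tilde U_1,\ldots,\tilde U_{n+1}$ with common intersection. One then needs a lemma (this is where the linear ordering of orbit types is actually used): the union $\tilde U=\tilde U_1\cup\cdots\cup \tilde U_{n+1}$ is $G$-contractible to a single orbit, hence $\gamma_G(\tilde U)=1$. Subadditivity then forces the complement $\tilde U'=\bigcup_{j>n+1}\tilde U_j$ to satisfy $\gamma_G(\tilde U')\ge n$, and one inducts on the genus: the inductive hypothesis gives at least $\frac12 n(n+1)$ further orbits in $\tilde U'$, so $N\ge (n+1)+\frac12 n(n+1)=\frac12(n+1)(n+2)$. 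The point is that at the $j$-th stage you peel off \emph{exactly $j$} cover orbits (the vertices of a top simplex), not an unspecified categorical piece; this is what turns the linear count into $\sum_{j=1}^{\gamma_G(X)} j$. Your ``partition by dimension of the smallest face'' does not do this, since every vertex lies in a $0$-face.
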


\begin{rema}
Note that the assumption of Theorem \ref{thm:estimate by genus} is satisfied if the action is free or with one orbit type.
It is also satisfied for every $G$-space $X$ if $G$ is a  group whose subgroups are linearly ordered, e.g., if $G= \bz_{p^k}$ for $p$ a prime.
Moreover, if we assume that the regular good $G$-cover is strictly regular then in the hypothesis of Lemma \ref{nonempty itersection implies cat_G=1}
the assumption about the order of orbit types is not necessary.
\end{rema}

Let $X$ be $G$-space with a structure of a $G$-complex and let $\UU$ be a good $G$-cover of $X$. We can partition $\UU$ into $G$-invariant
subsets  by taking orbits $\tilde U_i:= G  U_i$. With this notation in mind we have the following auxiliary result.

\begin{lemma}\label{nonempty itersection implies cat_G=1}
If  $\tilde{U}_1 \cap \,\cdots\, \cap \,\tilde{U}_{k+1} \neq \emptyset$ then
$\cat_G({\overset{k+1}{\underset{i=1}\bigcup}}\, \tilde{U}_i) = 1$ and, consequently, $\gamma_G({\overset{k+1}{\underset{i=1}\bigcup}}\, \tilde{U}_i)=1$.
\end{lemma}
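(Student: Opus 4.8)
The plan is to show that $Y:=\bigcup_{i=1}^{k+1}\tilde U_i$ is $G$-contractible to a single orbit; the assertion about $\gamma_G$ will then follow formally. First I would observe that since $\tilde U_1\cap\cdots\cap\tilde U_{k+1}\neq\emptyset$, \emph{every} partial intersection $\tilde U_{i_0}\cap\cdots\cap\tilde U_{i_j}$ contains this common set, hence is nonempty, and by Definition \ref{defi:G-good cover II} each such intersection is $G$-contractible. After replacing the $U_i$ by suitable orbit representatives so that a point $x_0$ of the total intersection lies in every $U_i$, the regularity condition RC1) forces $hU_i=U_i$ for all $h\in H:=G_{x_0}$, so $H$ stabilises the whole central collection $\{U_1,\dots,U_{k+1}\}$.

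Next I would pass to the nerve. The subfamily $\UU|_Y:=\{gU_i\mid g\in G,\ 1\le i\le k+1\}$ covers $Y$ and is itself a good $G$-cover of $Y$: it is regular because $\UU$ is, and for every subgroup $K\le G$ the restriction $(\UU|_Y)^K$ is a good cover of $Y^K$, since the nonempty intersections of its members are intersections of members of $\UU$ and hence contractible in $X^K$ — this is exactly the criterion of Proposition \ref{prop:comparison of Definitions}. The equivariant Nerve Theorem \ref{thm:G-nerve homotopy equivalence} then yields a $G$-homotopy equivalence $Y\simeq_G |\mathcal N(\UU|_Y)|$. Because all intersections are nonempty, the induced cover $\{\pi(\tilde U_i)\}$ of the orbit space $Y/G$ has full nerve, so the quotient complex $\mathcal N(\UU|_Y)/G$ is the full simplex $\Delta^{k}$; in other words $|\mathcal N(\UU|_Y)|$ is a $G$-complex whose orbit space is contractible.

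It remains to show that a $G$-complex $\mathcal N$ with contractible (simplicial) orbit space $G$-deformation retracts onto a single orbit, and this is where the hypothesis enters and what I expect to be the main obstacle. Here I would use that the orbit types of $X$, and hence those occurring in $\mathcal N$, are linearly ordered: collapsing $\Delta^{k}$ linearly onto a chosen vertex $v^*$, I would lift the collapse stratum by stratum, beginning from the unique minimal (principal) orbit type, so that along the deformation stabilisers can only increase and each lifted homotopy is thereby forced to be equivariant and single-valued; the preimage of $v^*$ is then the orbit $G\cdot U_{i}\cong G/G_{U_i}$. (Equivalently, one checks on fixed-point sets that each $|\mathcal N|^{K}$ is empty or contractible in the pattern of an orbit and invokes the equivariant Whitehead theorem; the linear order is precisely what guarantees the comparability of stabilisers making the lift well defined, which is also why the assumption may be dropped for strictly regular covers, where condition R3) supplies the required nesting locally.) Granting this, $Y$ is $G$-homotopy equivalent to a single orbit, so $\cat_G(Y)=1$. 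Finally, Proposition \ref{genus estimates category 1} gives $\gamma_G(Y)\le\cat_G(Y)=1$, and since $\gamma_G$ is a positive integer we conclude $\gamma_G(Y)=1$, as required.
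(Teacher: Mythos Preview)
Your approach is essentially the paper's: both pass to the nerve $L=G\sigma=\tilde\sigma$ via the equivariant Nerve theorem, observe that the quotient is a $k$-simplex, and then use the linear ordering of orbit types to $G$-retract $\tilde\sigma$ onto a single vertex orbit. For the step you flag as the main obstacle the paper is simply more explicit than your sketch: writing a generic point of $\tilde\sigma$ as $[t_1\tilde g_1 v_1,\ldots,t_{k+1}\tilde g_{k+1}v_{k+1}]$ with $\tilde g_i\in G/H_i$, it takes the vertex $v_1$ with extremal isotropy $H_1$, uses the $G$-maps $\phi_i\colon G/H_i\to G/H_1$ supplied by the linear order to define a $G$-map $\tilde\sigma\to Gv_1\cong G/H_1$, and then writes down the linear $G$-homotopy $[t_1\tilde g_1 v_1,(1-s)t_2\tilde g_2 v_2,\ldots,(1-s)t_{k+1}\tilde g_{k+1}v_{k+1}]$ realising the retraction.
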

\begin{proof}
Using the $G$-nerve theorem we can replace $Y= {\overset{k}{\underset{i=1}\bigcup}}\, \tilde{U}_i$ by  a $G$-homotopy equivalent  $G$-complex $L$.
Moreover, by replacing, if necessary, an open set $U_i \subset \tilde{U}_i= GU_i$ by another element $gU_i$ of the orbit, we can assume that
${U}_1 \cap \,\cdots\, \cap \,{U}_{k+1} \neq \emptyset$. Since $\{U_i\}_{i=1}^{k+1}$  form a good cover of $Y$,
${\overset{k+1}{\underset{i=1}\cup}}\, {U}_i$ is homotopy equivalent to a $k$-simplex $\sigma \subset L$ and $L=G \sigma= \tilde{\sigma}$ is
its orbit. Note that for a regular simplicial action the isotropy groups of points of faces of simplex are greater or equal to the isotropy groups
of points of interior of simplex. Consequently, the minimal isotropy groups appear at the vertices of orbit of  an simplex, therefore
$\tilde{\sigma}$ (the orbit of $\sigma$) can be described as the set
$$ \left\{[t_1 \,\tilde{g}_1  v_1,\, \dots,\, t_{k+1} \, \tilde{g}_{k+1} v_{k+1}]\ \bigg|  \; \; g\in G, \; \; \sum_{i=1}^{k+1}\, t_i=1\right\},$$
where $\tilde{g}_i \in G/H_i$ is the class of $g_i\in G$ in $G/H_i$  and $H_i=G_{v_i}$ is the isotropy group of vertex $v_i$. Under
this identification $\sigma$ corresponds to
$$  \left\{[t_1 \,\tilde{e}_1  v_1,\, \dots,\, t_{k+1} \, \tilde{e}_{k+1} v_{n+1}]\ \bigg| \;  \sum_{i=1}^{k+1}\, 
t_i=1 \right\} .$$
Let  $H=G_{v_{i}}$, for $1\leq i \leq k$,  be the minimal orbit type on $Y$, i.e. on $ L=\tilde{\sigma}$. Without loss of generality, we may assume
that $i=1$. By the assumption about linear ordering of orbit types on $X$, thus on $Y$, for every $1 \leq i\leq k$ there exists a $G$-map
$\phi_i: G/H_i \to G/H_1$ with $\phi_1=\mathrm{id}$. We can define a $G$-map $\phi: \tilde{\sigma} \to G/H_1 \times  \sigma $ by
$$\phi([t_1 \,\tilde{g}_1  v_1,\, \dots,\, t_{k} \, \tilde{g}_{n+1} v_{n+1}])=[ t_1\, \alpha_1(\tilde{g}_1)v_1\,,\dots\,, t_{n+1}\,
\alpha_{n+1}(\tilde{g}_{n+1})v_{n+1}].$$

By taking the composition of $\phi$ with the projection onto $G/H_1$, i.e.  $  p_1 \circ \phi: \tilde{\sigma} \to G/H_1$, and identifying $G/H_1$
with $Gv_1 \iota \tilde{\sigma}$   we get a $G$-equivariant map of $\psi = \iota \circ  p_1\circ \phi:\tilde{\sigma}  \to Gv_1$ onto the orbit.
This map is a $G$-deformation retraction by the deformation $\psi = \iota \circ  p_1\circ \phi_s$, where
$$\phi_s([t_1\,\tilde{g}_1 v_1 \, ,t_2\,\tilde{g}_2 v_2\,, \dots,\,t_{k+1}\, \tilde{g}_{k+1} v_{k+1}]) =
[t_1\,\tilde{g}_1 v_1 \, ,(1-s)t_2\,\tilde{g}_2 v_2\,, \dots,\,(1-s)t_{k+1}\, \tilde{g}_{k+1} v_{k+1}])\,,$$
which shows that $\cat_G(\tilde{\sigma}) = \cat_G({\underset{1}{\overset{n+1}{\cup}}}\, \tilde{V}_i)=1$.
\end{proof}

\begin{proof}[Proof of Theorem \ref{thm:estimate by genus}]
Let $X$ be a $G$-space whose strict $G$-covering type coincides with the $G$-covering type, let
$\mathcal{U}= \{\tilde{U}_s\} $, $1\leq s\leq \ct_G(X)$ be its good $G$-cover and denote by $K$ the nerve complex of $\UU$.
The vertices of $K$ correspond to elements of $\mathcal{U}$ which are split into $\ct_G(X)$ orbits, and $k$-faces correspond to
non-empty intersections $U_{i_1}\cap U_{i_2}\cap \, \cdots \,  \cap U_{i_k}$. Note that $\gamma_G(|K|)=\gamma_G(X)$ and $\ct_G(|K|)= \ct_G(X)$.

We proceed by induction on $\gamma_G(X)$. If $\gamma_G(X) =1 $ then the right hand side of inequality is equal to $1$, but $\ct_G(X)\geq  1 $,
and the inequality is clearly satisfied.

Assume that the estimate holds for spaces with $\gamma_G(X) \leq n$, and let $\gamma_G(X) = n+1$.
By our assumption on the order of orbit types and Proposition \ref{upper estimate of genus}, $\dim K= \dim (X/G)=\dim (X) \geq n$,  so that there exist sets $U_1,\ldots, U_{n+1} \in \mathcal{U} $ which intersect non-trivially. Since $\mathcal{U}$ is regular, the orbits  $\tilde{U}_s = G U_s$  are different for distinct $i$.
Let $\tilde{U}:= \tilde{U}_1\cup\cdots \cup\, \tilde{U}_{n+1}$ and let $\tilde{U}^\prime$ be the union of orbits of the remaining  elements of
$\mathcal{U}$.  By Lemma  \ref{nonempty itersection implies cat_G=1} we have $\gamma_G(\tilde{V})=1$. Hence $\gamma_G(\tilde{V}^\prime)\geq
\gamma_G(X)- \gamma_G(\tilde{V}) \geq n$ by the subadditivity of genus. If $\gamma_G(\tilde{V}^\prime) =n$ we can use
the induction assumption to compute
$$\ct_G(X) \geq   (n+1)+\frac{1}{2}\, n(n+1) = \frac{1}{2} \,(n+1)(n+2)\, .$$
If $\gamma_G(\tilde{V}^\prime) =n+1$ we repeat the same argument subtracting each time $n+1$ orbits $\tilde{U}_s$  from  the good $G$-cover until the
$G$-genus drops down by one and then  we can use the induction assumption.
\end{proof}

\begin{rema} A similar strategy can be used to obtain an estimate of the $G$-covering type using the $G$-category. However, since the statement of Proposition \ref{upper estimate of genus} does not hold in general for $\cat_G(X)$, the corresponding formula would be more complicated and
thus less useful.
\end{rema}

\begin{exem}
Let $X:=S(V)$ be the sphere bundle of an $(n+1)$-dimensional complex free representation $V$ of $G=\bz_p$.
Then $\gamma_G(S(V))= \dim_\br(V)= 2n+2$ (cf. \cite{Bartsch}) and  $\cat_G(S(V))= \dim_\br(V)= 2n+2$ (cf. \cite{Marzan}).
By Theorem  \ref{thm:estimate by genus} we get
$$\ct_G(S(V)) \geq (n+1) \, (2n+3)\,,$$
Since in this case $\ct_G(S(V)) = \ct(S(V)/G) = \ct(L^{2n+1}(p))$ we get the same as estimate  of  $\ct(L^{2n+1}(p))$  as the one given in
\cite{GovcMarzantowiczPavesic2022} (and stronger than a previous estimate of \cite{GovcMarzantowiczPavesic2019}).
\end{exem}

Observe that we may also reverse the above estimates to obtain upper bounds for the $G$-genus of a $G$-space based
on the cardinality of some  invariant regular good $G$-cover or on the number of vertices in an equivariant regular
triangulation of a $G$-manifold.

\subsection{\label{section3} Estimate of genus by the length index in the equivariant  $K$-theory}

In this subsection, we briefly recall the notion of an equivariant
index based on the  cohomology length in a  given cohomology theory.
It was introduced and described in details by Thomas Bartsch in
\cite[Chapter 4]{Bartsch} and it can be used to estimate from below the $G$-genus of a $G$-space.
We will use in for two generalized equivariant cohomology theories, the equivariant $K$-theory and the Borel cohomology.

First we consider the equivariant $K$-theory, denoted $K^*_G(X)$.
It is generated by the $K_G$-theory,  the equivariant
$K$-theory of $G$-vector bundles, and is extended to a graded cohomology
theory by the equivariant Bott periodicity (see Segal \cite{Segal}).

Let us fix a set  $\mathcal{A}$ of nontrivial  orbits, which is obviously finite, since $G$ is finite.

\begin{defi} The $(\mathcal{A},K_G^*)$ -- cup length of a pair $(X,
X^\prime)$ of $G$-spaces is the smallest $r$ such that there exist
$A_1, \,A_2, \,\ldots\, ,A_r \in \mathcal{A}$ and $G$-maps $\beta_i:
A_i \to X$, $1\leq i \leq r$ with the property that for all $\gamma
\in K^*_G(X,X^\prime)$ and for all $\omega_i \in \ker \beta_i^* $ we
have
\begin{eqnarray*}\omega_1\, \cup\omega_2\cup\,
\cdots \, \cup\omega_r \,\cup \, \gamma \,=\, 0\,\in K^*_G(X,
X^\prime).\end{eqnarray*}

\noindent If there is no such $r$, then we say that the
$(\mathcal{A},K_G^*)$ -- cup length of $(X,X^\prime)$ is $\infty$,
and $r=0$ means that $K^*_G(X, X^\prime)=0$. Moreover, the
$(\mathcal{A}, K_G^*)$ -- cup length of $X$ is by definition the cup
length of the pair $(X,\emptyset)$.\end{defi}

Recall that for the equivariant cohomology theory $K^*_G$ and a
$G$-pair $(X,X^\prime)$, the co\-ho\-mo\-lo\-gy $K^*_G(X, X^\prime)$
is a module over the coefficient ring $K^*_G({\rm pt})$, via the
natural $G$-map $p_X : X \to {\rm pt}$. We write
\begin{eqnarray*}\omega \cdot \gamma= p_X^*(\omega) \cup \gamma,
\,\, \text{and} \,\, \omega_1 \cdot \omega_2 = \omega_1 \cup
\omega_2,\end{eqnarray*} for $ \gamma \in K^*_G(X,X^\prime)$ and
$\omega_1, \, \omega_2 \in K^*_G({\rm pt})$.
By taking  $R:=K_{G}({\rm pt})=R(G)\subset K^*_G({\rm pt})$, we obtain
the following version of \cite[Definition 4.1]{Bartsch}.

\begin{defi} The $(\mathcal{A},K_G^*, R)$ -- {{\it length index}} of a pair $(X,
X^\prime)$ of $G$-spaces is the smallest $r$ such that there exist
$A_1, \,A_2, \,\ldots\, ,A_r \in \mathcal{A}$ with the following
property:

For all $\gamma\in K^*_G(X,X^\prime)$ and all $\omega_i \in
R\cap\ker (K^*_G({\rm pt})  \to K^*_G(A_i))=\ker ( K_{G}({\rm pt})
\to K_G(A_i))$, $i=1, \, 2,\dots, r$, the product
\begin{eqnarray*}\omega_1 \cdot\omega_2 \cdots \omega_r\cdot\gamma
=0\in K^*_G(X, X^\prime).\end{eqnarray*}
\end{defi}

A comparison between the length index and the cup-length  is given in the
following statement (cf. Bartsch \cite[p. 59]{Bartsch}).

\begin{propo}\label{length and index}
For any system $\mathcal{A}$ and  every pair of $G$-spaces $(X,
X^\prime)$ we have
\begin{eqnarray*}(\mathcal{A},K_G^*, R)\,-\,{\text{length index of }}\;
(X,X^\prime)\,\leq \,(\mathcal{A},K_G^*)\,-\,{\text{cup length of }}
\; (X,X^\prime).
\end{eqnarray*}
\end{propo}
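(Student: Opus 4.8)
The plan is to compare two numerical invariants of the pair $(X,X')$: the $(\mathcal{A},K_G^*,R)$-length index and the $(\mathcal{A},K_G^*)$-cup length. Both are defined as the smallest $r$ for which a certain product of classes annihilates $K_G^*(X,X')$, so the natural strategy is to show directly that any configuration of orbits witnessing the cup length also witnesses the length index. In other words, I would exhibit the length index as a relaxation of the cup-length condition, forcing the inequality in the stated direction.

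First I would unwind both definitions side by side and identify the precise difference. In the cup length, one chooses orbits $A_1,\ldots,A_r\in\mathcal{A}$ together with $G$-maps $\beta_i\colon A_i\to X$, and requires that for every $\gamma\in K_G^*(X,X')$ and all $\omega_i\in\ker\beta_i^*\subseteq K_G^*(A_i)$ (pulled back appropriately) the product $\omega_1\cup\cdots\cup\omega_r\cup\gamma$ vanishes. In the length index the classes $\omega_i$ are restricted to lie in $R=K_G(\mathrm{pt})=R(G)$, and the relevant kernel is $\ker\big(K_G(\mathrm{pt})\to K_G(A_i)\big)$, which is exactly $R\cap\ker(K_G^*(\mathrm{pt})\to K_G^*(A_i))$. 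The key observation is that for an orbit $A_i$ the coefficient map $K_G^*(\mathrm{pt})\to K_G^*(A_i)$ factors through the map induced by the constant $G$-map $A_i\to\mathrm{pt}$, and the relevant $\omega_i$ act on $K_G^*(X,X')$ through the module structure over $R(G)$, i.e.\ via $\omega\cdot\gamma=p_X^*(\omega)\cup\gamma$.

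The core step is then the following containment. Suppose $A_1,\ldots,A_r$ with maps $\beta_i$ realize the cup length, so they annihilate as required. I claim the same orbits $A_1,\ldots,A_r$ realize the length index. Take any $\omega_i\in\ker\big(K_G(\mathrm{pt})\to K_G(A_i)\big)$; I would promote each such coefficient class to a class in $\ker\beta_i^*$ by pulling it back from $\mathrm{pt}$ to $X$ and observing that, since $\omega_i$ already dies under $A_i\to\mathrm{pt}$, its image $p_X^*(\omega_i)$ lies in $\ker\beta_i^*$ because $\beta_i^*\,p_X^*=(p_X\beta_i)^*$ and $p_X\circ\beta_i$ is the constant map $A_i\to\mathrm{pt}$ which factors through the point class that $\omega_i$ annihilates. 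Therefore every admissible coefficient tuple for the length index yields an admissible tuple for the cup length, and the cup-length hypothesis forces $\omega_1\cdots\omega_r\cdot\gamma=0$. Hence any $r$ that works for the cup length also works for the length index, so the smallest such $r$ for the length index is no larger.

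The main obstacle I anticipate is bookkeeping rather than conceptual: one must be careful that the module product $\omega_i\cdot\gamma=p_X^*(\omega_i)\cup\gamma$ used in the length index really matches the cup product $\omega_i\cup\gamma$ appearing in the cup-length definition, and that the two notions of kernel line up under the factorization $A_i\xrightarrow{\beta_i}X\xrightarrow{p_X}\mathrm{pt}$. Concretely the delicate point is the compatibility $\beta_i^*\circ p_X^*=(p_X\circ\beta_i)^*$ together with the identification $R\cap\ker(K_G^*(\mathrm{pt})\to K_G^*(A_i))=\ker(K_G(\mathrm{pt})\to K_G(A_i))$, which requires that restricting the coefficient ring to $R(G)$ does not introduce extra kernel. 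Once these identifications are checked, the inequality follows immediately since the length index only quantifies over the smaller class of coefficients $R(G)\subseteq K_G^*(\mathrm{pt})$ and drops the choice of maps $\beta_i$, making its defining condition easier to satisfy and hence its minimal $r$ no larger than that of the cup length.
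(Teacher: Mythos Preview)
Your argument is correct and is exactly the standard one: given $A_1,\ldots,A_r$ and maps $\beta_i$ witnessing the cup length, any $\omega_i\in\ker\big(K_G(\mathrm{pt})\to K_G(A_i)\big)$ satisfies $\beta_i^*(p_X^*(\omega_i))=(p_X\circ\beta_i)^*(\omega_i)=0$, so $p_X^*(\omega_i)\in\ker\beta_i^*\subseteq K_G^*(X)$, and the cup-length hypothesis kills $p_X^*(\omega_1)\cup\cdots\cup p_X^*(\omega_r)\cup\gamma=\omega_1\cdots\omega_r\cdot\gamma$. One small slip: you wrote $\ker\beta_i^*\subseteq K_G^*(A_i)$, but since $\beta_i^*\colon K_G^*(X)\to K_G^*(A_i)$, the kernel lives in $K_G^*(X)$; your subsequent computations use this correctly, so it is only a typo.

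As for comparison with the paper: the paper does not actually supply a proof of this proposition; it simply records the inequality with a reference to Bartsch \cite[p.~59]{Bartsch}. Your write-up is precisely the routine verification one finds behind that citation, so there is nothing to contrast.
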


The $(\mathcal{A},K_G^*, R)$ -- {\it{length index}} has many
properties which are important from the point of view of
applications to study critical points of $G$-invariant functions and
functionals (see \cite{Bartsch}), and we are going to use some of them.

For the rest of this subsection let $G=\bz_{pn}$. Following \cite{Bartsch}, given two powers $m,n$ of $p$ satisfying $ 1 \leq m\leq n\leq
p^{k-1}$  we set
\begin{equation}\label{A(m,n)}\mathcal{A}_{m,n}:= \{ G/H\, | \, H \subset G;\, m\leq \vert H\vert \leq n\,\},
\end{equation}
where $\vert H\vert $ is the cardinality of $H$. Next we put
\begin{equation}\label{l_n}
\ell_n(X, X^\prime) := (\mathcal{A}_{m,n},K_G^*,R)-{\text{\it length
index of }}\;\; (X, X^\prime)\,.
\end{equation}

\begin{rema}\label{Observation 5.5}\rm
By \cite[Observation 5.5]{Bartsch}, the index $\ell_n $ does not depend
on $m$. It says that if $\mathcal{A}^\prime \subseteq
\mathcal{A}$ is such that for each $A\in \mathcal{A}$ there exists
$A^\prime \in \mathcal{A}^\prime$ and a $G$-map $A\to A^\prime$, then
\begin{eqnarray*}
(\mathcal{A},K_G^{*},R)-{\text{length index}} \;=\;
(\mathcal{A}^\prime,K_G^{*},R)-{\text{length index}}.
\end{eqnarray*}
\end{rema}

For us important is the following.
\begin{prop}\label{prop: genus by length}(cf. \cite[Corollary 4.9]{Bartsch})
Given a $G$-space $X$ and a family of orbits $\mathcal{A}$   we have
$$\ell(X) \leq \gamma_G(X)\,,$$
where the length $\ell$ in a generalized equivariant cohomology $h_G^*$ and the $G$-genus are taken relative to  $\mathcal{A}$.
\end{prop}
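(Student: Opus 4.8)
The plan is to follow the standard strategy for bounding a cohomological length index from above by the $G$-genus (as in Bartsch \cite[Chapter 4]{Bartsch}), exploiting the canonical open cover of an iterated join. Set $n := \gamma_G(X)$; by definition there are orbits $A_1,\dots,A_n\in\mathcal{A}$ and a $G$-map $f\colon X\to A_1*\cdots*A_n$. I would then cover the join $J:=A_1*\cdots*A_n$ by the $n$ open $G$-invariant sets $W_i:=\{\,[t_1a_1,\dots,t_na_n]\mid t_i>0\,\}$. Each $W_i$ admits a $G$-equivariant deformation retraction onto the factor $A_i$ (push the remaining join coordinates to zero while raising $t_i$ to $1$), and clearly $W_1\cup\cdots\cup W_n=J$. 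Pulling back along $f$ produces an open $G$-cover $U_i:=f^{-1}(W_i)$ of $X$, together with $G$-maps $q_i\colon U_i\to A_i$ obtained by composing the inclusion $U_i\hookrightarrow J$ with the retraction $W_i\to A_i$.

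Next I would analyse the coefficient classes. Fix $\gamma\in h_G^*(X)$ and, for each $i$, an element $\omega_i\in\ker\big(h_G^*(\mathrm{pt})\to h_G^*(A_i)\big)$. Since the structural map $U_i\to\mathrm{pt}$ factors as $U_i\xrightarrow{\,q_i\,}A_i\to\mathrm{pt}$, the restriction of $p_X^*(\omega_i)$ to $U_i$ equals $q_i^*\big(p_{A_i}^*(\omega_i)\big)=0$. By the long exact sequence of the pair $(X,U_i)$, the class $p_X^*(\omega_i)$ therefore lifts to a relative class $\tilde{\omega}_i\in h_G^*(X,U_i)$ that maps to $p_X^*(\omega_i)$ under the map $h_G^*(X,U_i)\to h_G^*(X)$.

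Finally I would invoke multiplicativity of the relative cup product in the generalized equivariant theory $h_G^*$. The relative classes multiply to $\tilde{\omega}_1\cup\cdots\cup\tilde{\omega}_n\in h_G^*\big(X,\,U_1\cup\cdots\cup U_n\big)=h_G^*(X,X)=0$, using that the $U_i$ cover $X$. By naturality of cup products, the image of this product in $h_G^*(X)$ is $p_X^*(\omega_1)\cup\cdots\cup p_X^*(\omega_n)$, which is hence zero; multiplying by $\gamma$ yields $\omega_1\cdot\omega_2\cdots\omega_n\cdot\gamma=0$. This exhibits $A_1,\dots,A_n$ as admissible witnesses for the length index, whence $\ell(X)\leq n=\gamma_G(X)$.

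The routine parts here are the join cover and the factorization of the structural maps through $A_i$. The step that demands genuine care is the multiplicativity of the relative product $h_G^*(X,U_1)\otimes\cdots\otimes h_G^*(X,U_n)\to h_G^*\big(X,\bigcup_i U_i\big)$ for an arbitrary generalized equivariant cohomology theory: one must ensure the cover $\{U_i\}$ is excisive and that the pairs $(X,U_i)$ are good pairs, so that the long exact sequences and the relative products are available. Granting the standing $G$-CW hypotheses (passing, if necessary, to $G$-CW subcomplexes onto which the $U_i$ equivariantly deformation retract), this is where I expect the main technical effort to lie.
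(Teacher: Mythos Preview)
The paper does not supply its own proof of this proposition; it is simply cited from Bartsch \cite[Corollary 4.9]{Bartsch}. Your argument is correct and is precisely the standard proof one finds there: pull back the canonical open cover of the join, lift each $p_X^*(\omega_i)$ to a relative class using that its restriction to $U_i$ vanishes, and use the relative cup product landing in $h_G^*(X,X)=0$.

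One small slip in the write-up: you describe $q_i$ as obtained by ``composing the inclusion $U_i\hookrightarrow J$ with the retraction $W_i\to A_i$,'' but $U_i\subset X$, not $J$; what you mean is the restriction $f|_{U_i}\colon U_i\to W_i$ followed by the retraction $W_i\to A_i$. With that correction the factorization $U_i\to A_i\to\mathrm{pt}$ is exactly right, and your caveat about excisiveness of the cover and availability of the relative product is the appropriate place to flag the only genuinely technical point.
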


We will write $\mathcal{A}_X$ for the set of all $G$-orbits of $X$ (up to isomorphism of finite $G$-sets).

\begin{theorem}(\cite[Theorem 5.8]{Bartsch})\label{thm:estimate of l_n}
Let $V$ be an orthogonal representation of $G=\bz_{p^k}$ satisfying
$V^G=\{0\}$, and let $d= \dim_\bc V =\frac{1}{2}\, \dim_\br V$. Fix $m,\,n $ two
powers of $p$ as above. Then
\begin{eqnarray*}
\ell_n (S(V))\, \geq \,
\begin{cases} 1 +
\Big[\frac{(d-1)m}{n}\Big]\;\;{\text{if}}\; \; \mathcal{A}_{S(V)}
\subset \mathcal{A}_{m,n}, \cr \infty \phantom{+
\Big[\frac{(d-1)m}{n}\Big]} \;\;{\text{if}} \;\;\mathcal{A}_{S(V)}
\nsubseteq \mathcal{A}_{1,n}\, ,
\end{cases}
\end{eqnarray*}
where $[x]$ denotes the least integer greater than or equal to $x$.
Moreover, if $ \mathcal{A}_{S(V)} \subset \mathcal{A}_{n,n}$, then
\begin{eqnarray*}\ell_n (S(V))= d\,.\end{eqnarray*}
\end{theorem}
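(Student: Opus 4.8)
The plan is to compute the length index through the $K$-theory Euler class of $S(V)$ and then reduce everything to a divisibility question in the representation ring $R(G)=R(\bz_{p^k})\cong\bz[t]/(t^{p^k}-1)$, where $t$ is a faithful character. First I would record the $R(G)$-module structure of $K_G^*(S(V))$: writing $V=\bigoplus_{i=1}^d\chi^{a_i}$ as a sum of characters (possible since $V^G=\{0\}$ forces every weight $a_i\not\equiv 0\bmod p^k$), the cofiber sequence $S(V)_+\to D(V)_+\to S^V$ together with the equivariant Thom isomorphism and Bott periodicity identifies the connecting map with multiplication by the $K$-theoretic Euler class $e(V)=\prod_{i=1}^d(1-t^{a_i})\in R(G)$, so that $K_G^0(S(V))\cong R(G)/(e(V))$ and $K_G^1(S(V))\cong \mathrm{Ann}_{R(G)}(e(V))$. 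Next, using Remark~\ref{Observation 5.5} I would collapse the family: every orbit $G/H$ with $m\le|H|\le n$ admits a $G$-map to $G/H_n$, where $H_n\le G$ is the unique subgroup of order $n=p^s$; hence $\ell_n(S(V))$ equals the length index relative to the single orbit $G/H_n$, whose kernel ideal is the principal ideal $I_{H_n}=\ker(R(G)\to R(H_n))=(t^n-1)$. Consequently $\ell_n(S(V))$ is exactly the least $r$ with $(t^n-1)^r\cdot K_G^*(S(V))=0$, and taking $\gamma=1\in K_G^0(S(V))$ shows that the \emph{lower} bound is equivalent to proving $(t^n-1)^r\notin(e(V))$ for every $r$ below the claimed value.

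I would then translate the orbit-type hypotheses into arithmetic of the weights by setting $b_i:=v_p(a_i)$, so that the stabilizer of a point supported on a coordinate set $I$ is the subgroup of order $p^{\min_{i\in I}b_i}$; thus $\mathcal{A}_{S(V)}\subset\mathcal{A}_{m,n}$ is equivalent to $\log_p m\le b_i\le s$ for all $i$, while $\mathcal{A}_{S(V)}\nsubseteq\mathcal{A}_{1,n}$ means some $b_i\ge s+1$. The infinite case is then immediate: evaluating at a primitive $p^{s+1}$-th root of unity via $\varphi\colon R(G)\to\bz[\zeta_{p^{s+1}}]$, $t\mapsto\zeta_{p^{s+1}}$ (a valid homomorphism since $s+1\le k$), the offending factor gives $\varphi(e(V))=0$, whereas $\varphi(t^n-1)=\zeta_p-1\ne 0$; therefore $(t^n-1)^r\notin(e(V))$ for every $r$ and $\ell_n(S(V))=\infty$.

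The heart of the argument is the finite lower bound, and this is where I expect the \emph{main obstacle}. The naive estimate — pushing $(t^n-1)^r\in(e(V))$ through the single evaluation $\varphi$ above and comparing valuations at the prime above $p$ (namely $v(t^n-1)=p^s$ and $v(1-t^{a_i})=p^{b_i}$) — only yields $r\ge\sum_i p^{b_i-s}$, which is strictly too weak; already a small example with $d=2$ shows it misses a full unit of $r$. The reason is that over $\bz$ the cyclotomic factors of $R(G)$ are glued along $p$, so membership in $(e(V))$ is genuinely finer than divisibility in any one factor. To recover the missing contribution I would work in the $(p,t-1)$-adic setting: writing $x=t-1$ one has $t^n-1\equiv p^s x\pmod{x^2}$ against the relation $(1+x)^{p^k}-1$, while $e(V)$ has $x$-order $d$ with leading coefficient $\prod_i(-a_i)$ of $p$-valuation $\sum_i b_i$. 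Singling out one weight of maximal valuation absorbs a single factor of $t^n-1$ and produces the leading $+1$, while the remaining $d-1$ weights each consume a factor at the fractional rate $p^{b_i-s}\ge m/n$; tracking the $x$-adic order together with the $p$-divisibility of the coefficients then gives $\ell_n(S(V))\ge 1+\lceil(d-1)m/n\rceil$. Making this count rigorous — i.e.\ controlling the interaction between the augmentation filtration and the congruence $(1+x)^{p^k}=1$ — is the technical crux.

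Finally, for the equality statement I would treat the case $\mathcal{A}_{S(V)}\subset\mathcal{A}_{n,n}$, where every $b_i=s$. Setting $y=t^{p^s}$ and writing $a_i=p^s c_i$ with $\gcd(c_i,p)=1$, each Euler factor becomes $1-y^{c_i}=(y-1)(1+y+\cdots+y^{c_i-1})$, and since $y\mapsto y^{c_i}$ is an automorphism of $\bz_{p^{k-s}}$ the cofactor is a unit after localizing at $p$; hence $e(V)=(\text{unit})\,(t^n-1)^d$. This gives $(t^n-1)^d\in(e(V))$ and, because $\mathrm{Ann}(e(V))=\mathrm{Ann}((t^n-1)^d)$, also annihilates $K_G^1$, so $\ell_n(S(V))\le d$; combined with the lower bound (which reads $1+\lceil(d-1)\cdot 1\rceil=d$ here) this yields $\ell_n(S(V))=d$.
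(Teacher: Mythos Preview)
The paper does not prove this theorem at all: it is quoted verbatim from Bartsch's monograph as \cite[Theorem~5.8]{Bartsch} and used as a black box to deduce Theorem~\ref{ct of spheres for Z^{p^k}}. There is therefore no ``paper's own proof'' to compare your proposal against; any argument you produce is necessarily different from what the present paper does, which is simply to cite the result.

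That said, your plan is a reasonable outline of how such a result is typically established (and is in the spirit of Bartsch's Chapter~5): identify $K_G^0(S(V))\cong R(G)/(e(V))$ via the equivariant Euler class, collapse the family to the single orbit $G/H_n$ using Observation~\ref{Observation 5.5}, and then reduce the length index to the nilpotency of $(t^n-1)$ on $R(G)/(e(V))$. Your treatment of the case $\mathcal{A}_{S(V)}\nsubseteq\mathcal{A}_{1,n}$ via the evaluation $t\mapsto\zeta_{p^{s+1}}$ is clean and correct. You are also candid that the sharp lower bound $1+\big\lceil (d-1)m/n\big\rceil$ is the real work and that a single cyclotomic evaluation loses one unit; your proposed fix---tracking simultaneously the $(t-1)$-adic order and the $p$-divisibility of the leading coefficient---is the right idea, but as written it is a sketch rather than a proof, and making it rigorous is genuinely the content of Bartsch's argument. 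In the equality case your unit argument for $e(V)=(\text{unit})\cdot(t^n-1)^d$ is only valid after localizing at $p$; you would still need to justify why this suffices for membership $(t^n-1)^d\in(e(V))$ in $R(G)$ itself (for instance by showing the relevant quotient is $p$-primary), or alternatively argue directly that $1+y+\cdots+y^{c_i-1}$ divides a power of $(1-y)$ times a $p$-unit integer.
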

As a direct consequence of Theorems \ref{thm:estimate by genus} and \ref{thm:estimate of l_n} we get the corresponding estimate of equivariant covering
type and the number of vertices of a regular invariant triangulation of the orthogonal actions on spheres.
\begin{theorem}\label{ct of spheres for Z^{p^k}}
Let $V$ be an orthogonal representation of $G=\bz_{p^k}$, and $m,\, n, \,, d  $ as in Theorem \ref{thm:estimate of l_n}. If $\mathcal{A}_{S(V)}
\subset \mathcal{A}_{m,n}$ then
$$ \ct_G(S(V)) \geq \frac{1}{2} \, \bigl(1 +
\Big[\frac{(d-1)m}{n}\Big]\bigr) \big(2 +
\Big[\frac{(d-1)m}{n}\Big]\bigr)$$
\end{theorem}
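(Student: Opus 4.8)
The plan is to chain together three results already in hand: the lower bound for $\ct_G$ in terms of the $G$-genus (Theorem \ref{thm:estimate by genus}), the estimate of the genus by the length index (Proposition \ref{prop: genus by length}), and the computation of $\ell_n(S(V))$ (Theorem \ref{thm:estimate of l_n}). Beyond invoking these, the only substantive points are to verify that the hypotheses of each are met and to observe that the quadratic $t\mapsto\frac{1}{2}t(t+1)$ is monotone increasing, so that a lower bound on $\gamma_G$ propagates to a lower bound on $\ct_G$.

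First I would check that Theorem \ref{thm:estimate by genus} applies. That theorem requires the orbit types of the action to be linearly ordered. Since $G=\bz_{p^k}$ is cyclic of prime-power order, its subgroups form a chain $\{e\}\subset\bz_p\subset\cdots\subset\bz_{p^k}$, so the orbit types are automatically linearly ordered for \emph{any} $G$-space, as highlighted in the remark following Theorem \ref{thm:estimate by genus}. Hence $\ct_G(S(V))\geq\frac{1}{2}\,\gamma_G(S(V))\,(\gamma_G(S(V))+1)$.

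Next I would bound $\gamma_G(S(V))$ from below. Applying Proposition \ref{prop: genus by length} relative to the family $\mathcal{A}=\mathcal{A}_{m,n}$, whose associated length index is $\ell_n$, gives $\ell_n(S(V))\leq\gamma_G(S(V))$. The hypothesis $\mathcal{A}_{S(V)}\subset\mathcal{A}_{m,n}$ is exactly the case in which Theorem \ref{thm:estimate of l_n} yields $\ell_n(S(V))\geq 1+\left[\frac{(d-1)m}{n}\right]$; here I should note that this inclusion forces $V^G=\{0\}$ (a nonzero fixed vector would produce an orbit $G/G$ whose isotropy has order $p^k>n$, violating the inclusion), so the standing hypothesis $V^G=\{0\}$ of Theorem \ref{thm:estimate of l_n} is indeed satisfied. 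Combining the two inequalities gives $\gamma_G(S(V))\geq 1+\left[\frac{(d-1)m}{n}\right]$.

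Finally I would substitute. Writing $t:=\gamma_G(S(V))$ and $t_0:=1+\left[\frac{(d-1)m}{n}\right]$, we have $t\geq t_0\geq 1$, and since $t\mapsto\frac{1}{2}t(t+1)$ is increasing on $t\geq 0$, the genus bound yields $\ct_G(S(V))\geq\frac{1}{2}t_0(t_0+1)=\frac{1}{2}\left(1+\left[\frac{(d-1)m}{n}\right]\right)\left(2+\left[\frac{(d-1)m}{n}\right]\right)$, which is the asserted estimate. There is no real obstacle, as the argument is a formal composition; the only points demanding care are the bookkeeping of which family $\mathcal{A}$ indexes both the genus and the length index, and the observation that the inclusion hypothesis legitimately supplies the fixed-point-freeness required by Theorem \ref{thm:estimate of l_n}.
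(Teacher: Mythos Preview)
Your proposal is correct and follows exactly the approach the paper takes: the paper presents this theorem as a ``direct consequence of Theorems \ref{thm:estimate by genus} and \ref{thm:estimate of l_n}'' with no further argument, and you have simply spelled out that chain (through Proposition \ref{prop: genus by length}) together with the routine verifications that $G=\bz_{p^k}$ has linearly ordered subgroups and that the quadratic is monotone. Your additional remark that $\mathcal{A}_{S(V)}\subset\mathcal{A}_{m,n}$ forces $V^G=\{0\}$ is a nice point the paper leaves implicit.
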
 \hfill $\square$

\begin{rema} Note that if $k\geq 2$ and $m\neq n$, then $S(V)$ from Theorem \ref{thm:estimate of l_n} has more than one orbit type, in particular,
it is not a free $G$-space.
\end{rema}

Using Theorem \ref{ct of spheres for Z^{p^k}} we estimate the $G$-covering type in a slightly more complicated situation.
\begin{prop}\label{estimate of ct for cyclic}
Let $G=\bz_m$ be the cyclic group with $m= p_1^{k_1}\, p_2^{k_2}\,\cdots \, p_r^{k_r}$, $p_i$ prime.  Choose
orthogonal representations $V_1,\ldots,V_r$ of $G$, such that, for each $i$, $V_i$ is induced by a representation of
 $\bz_{p_i^{k_i}}$. Assume furthermore that $V_i^G=\{0\}$ for all $i$.  Then
 $$\ct_G(S(V_1\oplus V_2\, \oplus \cdots \, \oplus V_r))\,=\,  \ct_{G_1}(S(V_1)) + \cdots + \ct_{G_r}(S(V_r))\,, $$
 where $G_i =\bz_{p^{k_i}}$ and  $\ct_{G_i}(S(V_i))$ is estimated in Theorem \ref{ct of spheres for Z^{p^k}}.
\end{prop}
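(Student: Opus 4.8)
The plan is to reduce to two coprime factors and then play a join construction (for the upper bound) against a fixed-point restriction argument (for the lower bound). By the Chinese Remainder Theorem $G=\bz_m\cong G_1\times\cdots\times G_r$, and since each $V_i$ is pulled back along $G\to G_i$, I may group the factors: writing $A:=G_1$, $B:=G_2\times\cdots\times G_r$, $V_A:=V_1$ and $V_B:=V_2\oplus\cdots\oplus V_r$, the pair $(A,B)$ satisfies $G=A\times B$, the factor $B$ acts trivially on $V_A$ and $A$ acts trivially on $V_B$, and $V_A^A=V_B^B=0$. If I prove $\ct_G(S(V_A\oplus V_B))=\ct_A(S(V_A))+\ct_B(S(V_B))$ in this two-factor situation, an induction on $r$ (applied to $B$ and $S(V_B)$) yields the general formula. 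Throughout I use the two geometric facts that $S(V_A\oplus V_B)\cong_G S(V_A)*S(V_B)$ with $G$ acting factorwise, and that the fixed spheres satisfy $S(V)^B=S(V_A)$ and $S(V)^A=S(V_B)$ (because $B$ is trivial on $V_A$ while $V_B^B=0$), whereas $S(V)^G=\emptyset$ since $V^G=0$. Covering type is a $G$-homotopy invariant, so I am free to compute on $S(V)$ and on any $G$-homotopy equivalent model.

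For the upper bound I would start from a minimal good $A$-cover $\mathcal A$ of $S(V_A)$ and a minimal good $B$-cover $\mathcal B$ of $S(V_B)$, each regarded as a good $G$-cover via the trivially acting complementary factor. Using the join coordinates I thicken every $A_\alpha\in\mathcal A$ to the open set $\hat A_\alpha$ of join points whose $S(V_A)$-coordinate is defined and lies in $A_\alpha$, and symmetrically for $\hat B_\beta$. These form a regular good $G$-cover of $S(V_A)*S(V_B)$ with $|\mathcal A|+|\mathcal B|$ orbits: a purely $A$-type intersection $G$-deformation retracts onto the corresponding intersection inside $S(V_A)$, hence onto a single orbit, while a mixed intersection retracts onto a product $\tilde A_I\times\tilde B_J$ of an orbit $Gz_A\subset S(V_A)$ and an orbit $Gz_B\subset S(V_B)$. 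Here $\mathrm{Stab}(z_A)\supseteq B$ and $\mathrm{Stab}(z_B)\supseteq A$, so $\mathrm{Stab}(z_A)\cdot\mathrm{Stab}(z_B)=A\times B=G$ and the product $Gz_A\times Gz_B$ is a single $G$-orbit; thus the intersection is $G$-contractible to an orbit. This gives $\ct_G(S(V))\le\ct_A(S(V_A))+\ct_B(S(V_B))$.

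For the lower bound, let $Y\simeq_G S(V)$ carry a good $G$-cover $\mathcal U$ with $\sct_G(Y)$ orbits. Restricting to the $B$-fixed set, Proposition \ref{prop:comparison of Definitions} together with the fact that a $G$-deformation retraction restricts to a deformation retraction of $B$-fixed sets shows that $\mathcal U^B=\{U\cap Y^B\}$ is a good equivariant cover of $Y^B$ for the residual action of $A=G/B$; since $Y^B\simeq_A S(V_A)$, the number of orbits of $\mathcal U$ that meet $Y^B$ is at least $\ct_A(S(V_A))$. Symmetrically, the number of orbits meeting $Y^A$ is at least $\ct_B(S(V_B))$. The decisive point is that these two families of orbits are \emph{disjoint}: an orbit $\tilde U$ is $G$-contractible to a single orbit $Gz$, and meeting $Y^B$ forces $B\subseteq\mathrm{Stab}(z)$ while meeting $Y^A$ forces $A\subseteq\mathrm{Stab}(z)$; as $G$ is abelian the stabilizer is constant along $Gz$, so $z$ would be a global fixed point, contradicting $Y^G=\emptyset$. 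Hence $\sct_G(Y)\ge\ct_A(S(V_A))+\ct_B(S(V_B))$, and minimizing over $Y$ completes the matching lower bound.

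The main obstacle, and the step I would treat most carefully, is showing that the restricted family $\mathcal U^B$ is not merely a good cover of the underlying space $Y^B$ but a genuine equivariant good $A$-cover whose orbit count is bounded below by $\ct_A(S(V_A))$; this is where the characterization of good $G$-covers through their $H$-fixed restrictions (Proposition \ref{prop:comparison of Definitions}) and the triviality of the $B$-action on $Y^B$ are both needed. The other point requiring the hypotheses is the single-orbit property in the upper bound, which is exactly where the direct product splitting $G=A\times B$ (guaranteed by the coprimality of the $p_i^{k_i}$) and the vanishing of the fixed subspaces enter. Once these are in place, the disjointness of the two orbit families is the clean mechanism that turns the subadditive upper bound and the fixed-sphere lower bounds into an exact equality, and the induction on $r$ finishes the proof.
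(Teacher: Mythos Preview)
Your proposal is correct and follows essentially the same approach as the paper: induction on $r$ reducing to two coprime factors, the join decomposition $S(V)=S(V_A)*S(V_B)$ with pulled-back covers for the upper bound, and restriction to the fixed spheres $S(V)^A$, $S(V)^B$ for the lower bound, with the disjointness of the two orbit families as the key mechanism. You are in fact slightly more careful than the paper on two points: in the lower bound you work on an arbitrary $Y\simeq_G S(V)$ rather than assuming the minimal good cover lives on $S(V)$ itself, and in the upper bound you explicitly verify that mixed intersections are $G$-contractible to a \emph{single} orbit via $\mathrm{Stab}(z_A)\cdot\mathrm{Stab}(z_B)=G$, where the paper only says this ``can be checked directly''.
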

\begin{proof}
We show the statement in the case $r=2$. The general case follows by the induction.
Let  $G= G_1\times G_2$ where $G_1=\bz_{p_1^{k_1}}$ and $ G_2 = \bz_{p_2^{k_2}}$. First observe that
$S(V)^{G_1}= S(V_2)$, $S(V)^{G_2}= S(V_1)$ and $ S(V)= S(V_1) * S(V_2) $, where the action of $g=(g_1,g_2)$ on the element $[t v_1, (1-t)v_2]$
is defined as $g [t\,  v_1, (1-t)\, v_2] = [t\, g_1 v_1, (1-t)\, g_2v_2]$.

Let $\mathcal{U}$ be a good $G$-cover of $ S(V)$ split into $\{\tilde{U}_j\}_{j=1}^n $, $ n= \ct_G(S(V))$ orbits.
If $\tilde{U}_j \cap S(V)^{G_i} \neq \emptyset $, $i=1,2$, then $\tilde{U}_j$ is $G$-contractible to an orbit in $S(V_i)=S(V)^{G_{i}}$ as
a $G$-map preserves fixed points of subgroups. By the same argument if  $\tilde{U}_j \cap S(V)^{G_i} \neq \emptyset $, then
$\tilde{U}_j \cap  S(V)^{G_i} = \emptyset $ as  $S(V)^{G_i}  \cap  S(V)^{G_i}= \emptyset $.  Moreover, the families
$\{\tilde{U}_j \cap S(V)^{G_i}\}_{j=1}^{n_i}$, $i=1,0$, $n_1+ n_2 \leq n$ form a good $G$-covers of $S(V)^{G_i}$ by
Definition \ref{defi:G-good cover II} and Proposition \ref{prop:comparison of Definitions}. This shows that
$\ct_G(S(V)) \geq \ct_G(S(V_1)) +\ct_G(S(V_2))$, since $\ct_G(S(V_i)) = \ct_G(S(V)^{G_{1-i}})) = \ct_{G_i}(S(V_i))$, $i=1,2$
and proves the inequality in one direction.

To show the opposite inequality we consider $G$-equivariant deformation retractions of the sets $W_1= \{ [t\, v_1 +(1-t) v_2]:\, t\neq 1 \} $ onto
$S(V_1) {\overset{G}\subset} S(V)$, and $W_2= \{ [t\, v_1 +(1-t) v_2]:\, t\neq 0 \}$ onto $S(V_2) {\overset{G}\subset} S(V)$,
denoted respectively as $\rho_1$ and  $\rho_2$ and given by
$\rho_1([t\, v_1 +(1-t) v_2])=  v_1]$, and $\rho_2([t\, v_1 +(1-t) v_2])=  v_2$.

Let $\mathcal{U}_i= \{\tilde{U}_s\}_{s=1}^{n_1}$, $i=1,2$,  be a good $G$-cover of $S(V_i)$ with $n_i = \ct_{G}(S(V_i))$. Then
the family of sets $ \mathcal{U}=\{\rho_1^{-1}(\mathcal{U}_1) \cup \rho_2^{-1}(\mathcal{U}_2)$ forms a good $G$-cover of $S(V)$ of
cardinality $n_1 +n_2$ as can be checked directly. This shows that $\ct_G(S(V)) \leq  \ct_G(S(V_1)) +\ct_G(S(V_2))$ and consequently
proves the statement.
\end{proof}

\begin{exem}\label{exem:action of cyclic with two points}
Let $W$ be a $d$-dimensional orthogonal representation  of the cyclic group $G=\bz_m$, such that the action of $G=\bz_m\subset S(\bc)$
as the roots of unity is free on $S(W)$. Note that $d$ can be arbitrary if $m=2$, otherwise $d$ must be even. Define $V:= W\oplus \br^1$;
then $S(V)= S(W)*S(\br)$ and  the action of $G$ on $S(V)$ is has exactly two fixed points  - the poles. Using the same construction as
in the proof of Theorem \ref{ct of spheres for Z^{p^k}} one can show that $$\ct_G(S(V) \leq \ct(S(\br)) +\ct_G(S(W)) = 2 + \ct_G(S(W))$$
If $\dim W = 2 $ this leads to the inequality $\ct_G(S(V))\leq 2 + 3 = 5$.

On the other hand,  if $d=2$, then $\dim_\br(S(W)) =1$, and consequently $\dim S(V)=2$.
Since this action on $S(V)=\bs^2$ preserves the orientation, we can apply Theorem \ref{thm:surface minimal invariant triangulation} and obtain
the equality $\ct_G(S(V))= \ct(S(V)/G) = \ct(\bs^2)= 4$.
\end{exem}

\section{Cohomological estimates} \label{sec:Cohomological estimates}

There are several lower estimates of the equivariant Lusternik-Schnirelmann category
$\cat_G(X)$ which are based on the multiplicative structure of  the cohomology ring $\tilde{H}^*(X/G;R)$,
or more generally of ${h_G}^*(X)$, where $h^*_G$ is any $G$-equivariant cohomology theory
(see Bartsch \cite{Bartsch}, Marzantowicz  \cite{Marzan}). This is due to the fact that cohomology products
detect intersection patterns of subspaces of $X$. In this section we will see that analogous ideas  play
an important role in the estimates of $G$-covering type and in the estimates of the number of orbits of
regular triangulations of $G$-spaces. These estimates, when available, can
considerably improve the category and genus estimates from the previous section.
Additionally, this approach can be used even when the action of $G$ has fixed points.

The singular cohomology theory has a natural filtration given as $H^{(s)}(X):= {\underset{k=0}{\overset{s}
{\oplus}}}\, H^k(X)$ which is related to the geometrical filtration of $X$ by its skeleta. In order to define
an analogous filtration for a
generalized equivariant cohomology theory $h^*_G$ we proceed as in
Segal \cite{Segal}.  Let $h_G^*$ be a generalized equivariant cohomology theory.
For a $G$–CW-complex $X$ we define a filtration of $h_G^*(X)$ as
$$h^*_{G,s} (X):=   \ker ( h_G^*(X)\to h^*_G(X^{(s-1)})).$$
This corresponds to the filtration of $X$ by $G$-subspaces $X^s=\pi^{-1}(Y^{(s)})$,  where $Y= X/G$ is
the orbit space whose cellular structure is induced by the $G$-CW-structure of $X$.  More generally, if $X$
is a $G$-space for which there exists a $G$-simplicial complex $K$ and a $G$-homotopy equivalence
$w: X\to |K|$,  then we take a filtration of $X$ by $G$-subsets $X^{(s)}= w^{-1}(|K^{(s)}|$ and
the corresponding filtration of $h^*_G(X)$ defined as above. The general case is discussed in
Segal \cite[Section 5]{Segal} using the nerve of a $G$–stable closed
finite covering of $X$, which is not necessarily a good cover.
We are at position to define the degrees of element of $h_G^*(X)$.

The filtration of $h_G^*(X)$ defined above  is decreasing ($d= \dim X$):
\begin{equation}\label{filtration}
h_G^*(X)=h_{G,0}^*(X) \supset h^*_{G,1}(X) \supset\cdots \supset h^*_{G,d-1}(X) \supset h^*_{G,d}(X) = 0
\end{equation}
and $h_G^*(X)$ is a filtered ring, because
\begin{equation}
 h_{G,s}^*(X) \, \cdot \, h_{G,s^\prime}^*(X)\,\subseteq h_{G,s+s^\prime}^*(X)
\end{equation}
As a consequence, each $h_{G,s}^*(X)$ is an ideal in $h_G^*(X)$. In particular,
we have the following characterization of $h_{G,1}^*(X)$ (cf. Segal
\cite[Proposition 5.1(i), page 146]{Segal}  )
\begin{equation}
h_{G,1}^*(X) = \ker \left(h^*_G(X)\to \prod_{x\in X} h_G^*(G/G_x)\right) = \bigcap_{x\in X} \ker\big( h_G^*(X)
\to h_G^*(G/G_x)\big)
\end{equation}

\begin{defi}\label{degree of element}
The \emph{degree} $|u|$ of an element $u\in h_G^*(X)$ is the maximal  $i$, such that  $u\in  h_{G,i}^*(X)$.
\end{defi}

Given an $n$-tuple of positive integers $i_1,\ldots,i_n \in \NN$ we will say that a $G$-space $X$
\emph{admits an essential $(i_1,\ldots,i_n)$-product} in $h_G^*$ if there are cohomology classes
$u_k \in h_{G,i_k}^*(X)$, such that the product \newline $u_1\cdot u_2\cdots u_n$ is non-trivial.
For every $(i_1,\ldots,i_n)$ there exist a $G$-space $X$ and a generalized equivariant cohomology theory
 $h_G^*$  such that admits an essential $(i_1,\ldots,i_n)$-product.
For example we can take $X=S^{i_1}\times\cdots\times S^{i_n}$, where $S^{i_n}= S(V_{i_n})$ is the sphere of
an  orthogonal orientation-preserving representation of $G$ and $h_G^*(X):= H^*(X; \bz)$.
Clearly, if $X$ admits an essential $(i_1,\ldots,i_n)$-product then so does every  $Y{\overset{G}{\simeq}} X$,
since their cohomology rings are isomorphic.

We may therefore define the $G$-\emph{covering type of the $n$-tuple of positive integers}
$(i_1,\ldots,i_n)$ with respect the equivariant cohomology theory  $h_G^*$ as
$$\ct_G(i_1,\ldots,i_n):=\min\big\{\ct_G(X)\mid X\,\text{admits\ an\ essential}\ (i_1,\ldots,i_n)
\mathrm{-product}\big\}$$
The following proposition follows immediately from the definition.

\begin{proposition}\label{prop:coho est}
$$ \ct_G(X) \geq \max \{\ct_G(|u_1|,\ldots,|u_n|)\mid \  \mathrm{for\  all}\ 0\neq u_1\cdots u_n\in h^*_G(X)\}$$
\end{proposition}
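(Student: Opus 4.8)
The plan is to recognize that this inequality is a direct unwinding of the definition of $\ct_G(i_1,\ldots,i_n)$ as a minimum, so that the only real content lies in correctly matching the filtration degrees of the factors. I would begin by fixing an arbitrary nontrivial product $0\neq u_1\cdots u_n\in h_G^*(X)$ in which each factor has positive degree, i.e. $|u_k|\geq 1$; this positivity is exactly what makes $(|u_1|,\ldots,|u_n|)$ an $n$-tuple of positive integers, so that the invariant $\ct_G(|u_1|,\ldots,|u_n|)$ is defined in the sense introduced just before Proposition \ref{prop:coho est}. Setting $i_k:=|u_k|$, the definition of degree (Definition \ref{degree of element}) gives immediately that $u_k\in h_{G,i_k}^*(X)$ for each $k$, since $|u_k|$ is by definition the maximal index $i$ with $u_k\in h_{G,i}^*(X)$.

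The crux of the argument is then the observation that, because $u_1\cdots u_n\neq 0$ by hypothesis and $u_k\in h_{G,i_k}^*(X)$, the space $X$ \emph{itself} admits an essential $(i_1,\ldots,i_n)$-product in $h_G^*$, precisely matching the defining condition of that notion. In other words, the classes $u_1,\ldots,u_n$ serve simultaneously as the witnesses of membership in the correct filtration stages and of the nonvanishing of the product. Invoking the definition of $\ct_G(i_1,\ldots,i_n)$ as the minimum of $\ct_G(Y)$ taken over all $G$-spaces $Y$ admitting an essential $(i_1,\ldots,i_n)$-product, and noting that $X$ is one such competing space, I obtain $\ct_G(|u_1|,\ldots,|u_n|)=\ct_G(i_1,\ldots,i_n)\leq \ct_G(X)$. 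Taking the maximum over all admissible nontrivial products $u_1\cdots u_n$ yields the stated inequality.

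The only obstacle here is bookkeeping rather than substance: one must be careful to use the degree $|u_k|$ in its exact meaning, so that $u_k\in h_{G,|u_k|}^*(X)$ is guaranteed, and to restrict attention to factors of positive degree so that the right-hand invariant $\ct_G(|u_1|,\ldots,|u_n|)$ is well-defined. No homotopy-theoretic input is needed at this stage, since the $G$-homotopy invariance that legitimizes $\ct_G(i_1,\ldots,i_n)$ has already been absorbed into its definition via the isomorphism of equivariant cohomology rings under $G$-homotopy equivalence noted immediately before the definition of the essential product.
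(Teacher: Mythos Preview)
Your argument is correct and is precisely the unwinding of the definition that the paper has in mind; the paper itself offers no proof beyond the remark that the proposition ``follows immediately from the definition.'' Your care in noting that only factors of positive degree should be considered, so that $\ct_G(|u_1|,\ldots,|u_n|)$ is defined, is a welcome clarification that the paper leaves implicit.
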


Although the covering type of a specific product of cohomology classes may appear as a coarse estimate, it will serve very well our purposes.
We will base our computations on the following technical lemmas. The first is a standard argument that we give here for the convenience of the reader.
To shorten the notation, we  denote $h_{G,1}^*(X)$ by $ \widetilde h^*_G(X)$ and note that our  definition is equivalent to the following
$$ \widetilde h^*_G(X) = {\underset{\phi: G {\overset{G}\to} X}{\bigcap}}\ker (\phi^*\colon  h_G^*(X) \to h_G^0(G))$$
\begin{lemma}\label{product of two elements 2}
Let $X=U\cup V$ where $U,V$ are open $G$-invariant subsets of $X$, and let $u,\,v\in\widetilde h^*_G(X)$ be cohomology classes whose product $u\cdot v$ is non-trivial.
If $U$ is $G$-categorical in $X$ then $i_V^*(v)$ is a non-trivial element of $h_G^*(V)$ (where $i_V$ denotes the inclusion map $i_V\colon V {\overset{G}{\hookrightarrow}} X$).
\end{lemma}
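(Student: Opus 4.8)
The plan is to use the Mayer--Vietoris sequence for the decomposition $X = U \cup V$ together with the hypothesis that $U$ is $G$-categorical. First I would recall what $G$-categoricity buys us: since $U$ is $G$-categorical in $X$, the inclusion $i_U \colon U \overset{G}{\hookrightarrow} X$ is $G$-homotopic to the inclusion of an orbit $Gx \hookrightarrow X$. Because $u \in \widetilde h^*_G(X) = h^*_{G,1}(X)$, by the characterization of $h^*_{G,1}$ recalled just before the lemma (namely $h^*_{G,1}(X) = \bigcap_{x} \ker(h^*_G(X) \to h^*_G(G/G_x))$), the restriction of $u$ to any orbit vanishes. Hence $i_U^*(u) = 0$ in $h^*_G(U)$, since $i_U^*$ factors (up to the $G$-homotopy) through the restriction to the orbit $Gx \cong G/G_x$.

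The core step is then an exactness argument. I would write down the portion of the long exact sequence of the pair $(X,U)$ in the equivariant theory $h^*_G$, namely
\begin{equation*}
h^*_G(X,U) \overset{j^*}{\longrightarrow} h^*_G(X) \overset{i_U^*}{\longrightarrow} h^*_G(U).
\end{equation*}
Since $i_U^*(u) = 0$, exactness gives a class $\bar u \in h^*_G(X,U)$ with $j^*(\bar u) = u$. The product $\bar u \cdot v$ then lives in the relative group $h^*_G(X,U)$ (using the module structure of the relative theory over $h^*_G(X)$), and its image under $j^*$ is $u \cdot v \neq 0$ by hypothesis; so $\bar u \cdot v \neq 0$ in $h^*_G(X,U)$. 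Now I would invoke excision/the long exact sequence of the triple to relate $h^*_G(X,U)$ to $h^*_G(V, U\cap V)$: because $X = U \cup V$, excision yields $h^*_G(X,U) \cong h^*_G(V, U\cap V)$, and under this isomorphism $\bar u \cdot v$ corresponds to a nonzero relative class that maps to $i_V^*(v)$ (times the excised restriction of $u$) under the map $h^*_G(V, U\cap V) \to h^*_G(V)$. The upshot is that if $i_V^*(v)$ were zero, the product $u\cdot v$ would be forced to vanish, a contradiction.

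The cleanest way to package this is the relative product formula: if $u$ restricts to zero on $U$ it lifts to $\tilde u \in h^*_G(X,U)$, and the product $u\cdot v$ is the image under $h^*_G(X,U)\to h^*_G(X)$ of the relative product $\tilde u \cdot i_V^*(v)$ formed via the pairing $h^*_G(X,U)\otimes h^*_G(V) \to h^*_G(X,U)$ after excision. Concretely, $u\cdot v$ factors through $i_V^*(v)$, so $i_V^*(v)=0$ would give $u\cdot v = 0$. I would state this factorization as the main computation and conclude that $i_V^*(v)\neq 0$.

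The step I expect to be the main obstacle is making the excision and the relative cup-product pairing fully rigorous in the generalized \emph{equivariant} setting, where one must ensure that $h^*_G$ satisfies the excision axiom for the $G$-pair in question and that the module/pairing structure on relative groups behaves as in the ordinary case. For a $G$-CW pair (which is the standing assumption, via the $G$-homotopy equivalence to $|K|$) this is standard, so I would simply cite the equivariant analogues of the Eilenberg--Steenrod axioms; the genuinely content-bearing input is only the vanishing $i_U^*(u)=0$ coming from $G$-categoricity and $u \in \widetilde h^*_G(X)$, which is the part I would spell out carefully.
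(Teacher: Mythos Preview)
Your argument is correct and follows essentially the same approach as the paper's: use the $G$-categoricity of $U$ together with $u\in\widetilde h_G^*(X)$ to obtain $i_U^*(u)=0$, lift to relative cohomology via exactness, and derive a contradiction from the relative cup product. The paper's packaging is slightly slicker and avoids your excision/module-pairing detour: after assuming $i_V^*(v)=0$ for contradiction it also lifts $v$ to $\bar v\in h_G^*(X,V)$, so that $u\cdot v$ is the image of $\bar u\cdot\bar v\in h_G^*(X,U\cup V)=h_G^*(X,X)=0$.
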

\begin{proof}
Assume by contradiction that $i_V^*(v)=0$. Exactness of the cohomology sequence
$$h_G^*(X,V) \stackrel{j_V^*}{\longrightarrow} h_G^*(X) \stackrel{i_V^*}{\longrightarrow} h_G^*(V)$$
implies that there is a class $\bar v\in h_G^*(X,V)$ such that $j_V^*(\bar v)=v$.
Moreover $i_U^*(u)=0$, because $i_U\colon U\hookrightarrow X$ is $G$-homotopic to the inclusion of an orbit  $i_{Gx} \subset X$, so there
is a class $\bar u\in h_G^*(X,U)$ such that $j_U^*(\bar u)=u$.
Then $u\cdot v=j_U^*(\bar u)\cdot j_V^*(\bar v)$ is by naturality equal to the image of $\bar u\cdot\bar v\in h_G^*(X,U\cup V)=0$, therefore $u\cdot v=0$, which
contradicts the assumptions of the lemma.
\end{proof}

By inductive application of the above lemma we obtain the following:

\begin{lemma}
\label{lem:subproducts}
Let  $u_1,\ldots,\,u_n\in\widetilde h_G^*(X)$ be cohomology classes whose product $ u_1\cdots u_n$ is
non-trivial, and let $X=U_1\cup\cdots\cup U_k\cup V$ where $U_1,\ldots,U_k$
are open, $G$-categorical subsets of $X$, and $V$ is an open $G$-invariant subspace of $X$. Then
the product of any $(n-k)$ different classes among $i^*_V(u_1),\ldots,i^*_V(u_n)$ is a non-trivial class
in  $h_G^*(V)$.
\end{lemma}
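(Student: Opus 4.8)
The plan is to prove Lemma \ref{lem:subproducts} by induction on $k$, using Lemma \ref{product of two elements 2} as the base case and inductive engine. The statement to establish is that whenever $X=U_1\cup\cdots\cup U_k\cup V$ with each $U_j$ being $G$-categorical and $V$ open $G$-invariant, and $u_1\cdots u_n\neq 0$ with all $u_i\in\widetilde h_G^*(X)$, then any sub-product of $n-k$ of the restricted classes $i_V^*(u_i)$ is non-trivial in $h_G^*(V)$. The case $k=0$ is vacuous (the hypothesis gives $V=X$ and the product of all $n$ classes is non-trivial by assumption), and the case $k=1$ is exactly the content one extracts from Lemma \ref{product of two elements 2}.

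First I would set up the induction on $k$. For the inductive step, suppose the claim holds for coverings using $k-1$ categorical sets. Given $X=U_1\cup\cdots\cup U_k\cup V$, I would group the last categorical set together with $V$ by setting $V':=U_k\cup V$, which is again an open $G$-invariant subspace, and apply the induction hypothesis to the decomposition $X=U_1\cup\cdots\cup U_{k-1}\cup V'$. This yields that the product of any $n-(k-1)$ of the classes $i_{V'}^*(u_i)$ is non-trivial in $h_G^*(V')$. The key point is that each such non-trivial sub-product, together with the fact that $U_k$ is $G$-categorical inside $V'$, puts us exactly in the situation of Lemma \ref{product of two elements 2} applied to the space $V'=U_k\cup V$: any one further restriction to $V$ of a factor preserves non-triviality, so dropping one more class (to reach $n-k$ factors restricted to $V$) keeps the product non-zero.

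The main technical point to verify carefully is that $U_k$ remains $G$-categorical \emph{as a subset of} $V'$, not merely of $X$. Here I would use that a $G$-categorical set deforms to an orbit $Gx$ via a $G$-homotopy taking place inside $X$; since $U_k\subseteq V'$ and the deformation retracts $U_k$ onto an orbit, one must check the orbit and the homotopy stay within $V'$, or else argue at the level of the relative cohomology class $\bar u_k\in h_G^*(X,U_k)$ whose existence is what Lemma \ref{product of two elements 2} actually exploits. In fact the cleanest route is to observe that the proof of Lemma \ref{product of two elements 2} only uses that $i_{U_k}^*(u_k)=0$, which holds because $u_k\in\widetilde h_G^*(X)$ restricts trivially to any orbit and $U_k$ is $G$-categorical; this vanishing is inherited after restriction to $V'$, so the excision/exactness argument goes through verbatim over the pair $(V',V)$.

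The hard part will be the bookkeeping of which classes are retained and which is dropped at each stage, since the statement quantifies over \emph{all} sub-products of the correct size rather than one fixed choice. To handle this uniformly I would phrase the inductive hypothesis for an arbitrary index subset $S\subseteq\{1,\ldots,n\}$ of the appropriate cardinality and show the passage from $V'$ to $V$ works for any such $S$; then at the final step one invokes Lemma \ref{product of two elements 2} with $u$ taken to be the product $\prod_{i\in S}i_{V'}^*(u_i)$ and $v$ the single additional factor being restricted, noting that a product of classes in $\widetilde h_G^*$ still lies in the relevant filtration ideal so the lemma applies. Once the categorical-in-$V'$ issue is resolved, the induction closes and the full statement follows.
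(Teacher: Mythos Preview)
Your approach is correct and is exactly what the paper intends: it merely states that the lemma follows ``by inductive application'' of Lemma~\ref{product of two elements 2} and gives no further detail, so your write-up in fact fills in what the paper leaves implicit, including the right observation that the proof of Lemma~\ref{product of two elements 2} only uses $i_{U}^*(u)=0$, which is inherited over $V'=U_k\cup V$ because the inclusion $U_k\hookrightarrow V'\hookrightarrow X$ is still $G$-homotopic to an orbit inclusion and each $u_i\in\widetilde h_G^*(X)$ restricts trivially there.

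One small slip in your final paragraph: the roles of $u$ and $v$ are swapped. Since the conclusion of Lemma~\ref{product of two elements 2} is that $i_V^*(v)\neq 0$, you should take $v=\prod_{i\in S} i_{V'}^*(u_i)$ (the $(n-k)$-fold product you want nonzero on $V$) and $u=i_{V'}^*(u_j)$ for some $j\notin S$ (the single extra factor, whose restriction to the categorical set $U_k$ vanishes). With that correction the induction closes as you describe.
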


Toward the proof of the first statement, we begin the induction by using the following lemma.

\begin{lema}\label{lem:ct > i implies dim > i+2}
	For $ u \in h^*_G(X)$,
	if $ |u| \geq i$ then $\ct_G(X) \geq i+2$.
\end{lema}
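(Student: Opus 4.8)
The plan is to argue by contradiction from a hypothetically economical cover, extracting the bound from two ingredients: a combinatorial dimension estimate forced by regularity, and the vanishing of reduced equivariant cohomology on a $G$-contractible space. Throughout I take $u\neq 0$ and $i\ge 1$ (for $i=0$ the assertion degenerates to the trivial $\ct_G(X)\ge 1$, and one must of course exclude $G$-contractible $X$).

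First I would realize the covering type: choose $Y\overset{G}{\simeq}X$ carrying a regular good $G$-cover $\mathcal{U}$ with exactly $N:=\ct_G(X)$ orbits $\tilde U_1,\dots,\tilde U_N$, and let $K=\mathcal N(\mathcal U)$ be its nerve, so $|K|\overset{G}{\simeq}Y\overset{G}{\simeq}X$ by Theorem \ref{thm:G-nerve homotopy equivalence}. Replacing $\mathcal U$ by the open-star cover of $|K|$ (which is again a good $G$-cover with $N$ vertex-orbits, as used before Proposition \ref{prop:estimate vertices by covering type}), I may assume the cover lives on $|K|$ and its members cover $|K|$. Since the degree $|u|$ is a $G$-homotopy invariant (the filtration being defined intrinsically, cf. Segal \cite{Segal}), the class $u$ transports to a non-zero element of the same degree in $h^*_G(|K|)$, still denoted $u$. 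The key combinatorial input is condition RC1 of Definition \ref{defi:regular cover}: two members of one orbit are equal or disjoint, so any family of cover sets with a common point lies in pairwise distinct orbits. Hence every simplex of $K$ has its vertices in distinct orbits, the quotient $K/G$ is a simplicial complex on the $N$ orbits, and it is a subcomplex of the full simplex $\Delta^{N-1}$; in particular $\dim(K/G)\le N-1$.

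Now suppose, for contradiction, that $N\le i+1$, whence $\dim(K/G)\le N-1\le i$. The filtration of $h^*_G(|K|)$ is indexed by the skeleta of $|K/G|$, with $h^*_{G,i}(|K|)=\ker\!\bigl(h^*_G(|K|)\to h^*_G(\pi^{-1}(|K/G|^{(i-1)}))\bigr)$. If $\dim(K/G)\le i-1$, then $|K/G|^{(i-1)}=|K/G|$, the restriction map is the identity, and $h^*_{G,i}(|K|)=0$, contradicting $0\neq u\in h^*_{G,i}(|K|)$. The remaining possibility is $\dim(K/G)=i$, which forces $N=i+1$ and, since an $i$-simplex needs $i+1$ vertices, forces $K/G=\Delta^{i}$ to be the full simplex on all $N$ orbits. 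Lifting this top simplex to $K$ produces representatives $U_{s_1},\dots,U_{s_N}$, one from each orbit, with $U_{s_1}\cap\cdots\cap U_{s_N}\neq\emptyset$, so that $\tilde U_1\cap\cdots\cap\tilde U_N\neq\emptyset$. Applying Lemma \ref{nonempty itersection implies cat_G=1} to all $N$ orbits at once and using $\bigcup_{j=1}^N\tilde U_j=|K|$ gives $\cat_G(|K|)=1$, i.e.\ $|K|$ is $G$-contractible to a single orbit $Gx$. Then restriction to $Gx$ is an isomorphism $h^*_G(|K|)\cong h^*_G(Gx)$, so by the characterization $\widetilde h^*_G(|K|)=h^*_{G,1}(|K|)=\bigcap_{x}\ker\!\bigl(h^*_G(|K|)\to h^*_G(Gx)\bigr)=0$. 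As $i\ge 1$ yields $0\neq u\in h^*_{G,i}(|K|)\subseteq h^*_{G,1}(|K|)=0$, this is absurd. Hence $N\ge i+2$, as claimed.

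The genuinely delicate point, and the step I expect to require the most care, is the excluded boundary case $N=i+1$: a naive dimension count only gives $\ct_G(X)\ge i+1$, and the extra unit is exactly the fact that a full simplex is $G$-contractible and therefore cannot support a non-trivial reduced class. This is precisely what Lemma \ref{nonempty itersection implies cat_G=1} encodes, and here lies a caveat: that lemma presupposes either a linear ordering of the orbit types or, as noted in the Remark following Theorem \ref{thm:estimate by genus}, that the good $G$-cover be strictly regular. Since the present section permits actions with fixed points and arbitrary orbit types, I would invoke the strictly regular version, passing to a strictly regular good $G$-cover if necessary before running the argument above.
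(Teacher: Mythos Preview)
Your overall architecture matches the paper's proof: pass to the nerve $K$ of a minimal good $G$-cover, use regularity to bound $\dim(K/G)\le N-1$, dispose of the case $\dim(K/G)\le i-1$ via the filtration, and in the boundary case $N=i+1$ identify $K/G=\Delta^i$, so $K=G\sigma$ is the orbit of a single $i$-simplex. The divergence is in how you kill the reduced equivariant cohomology of $G\sigma$.

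You invoke Lemma~\ref{nonempty itersection implies cat_G=1} to conclude $\cat_G(|K|)=1$, hence $|K|$ is $G$-contractible to a single orbit. That lemma, however, is stated under the hypothesis that the orbit types are linearly ordered (or, per the Remark, that the cover is strictly regular), whereas Lemma~\ref{lem:ct > i implies dim > i+2} carries no such assumption. Your proposed fix---pass to a strictly regular good $G$-cover before starting---does not close the gap: $\ct_G(X)$ is defined via regular covers, and there is no guarantee that a strictly regular cover realizes the same minimal number of orbits, so the contradiction hypothesis $N\le i+1$ need not survive the passage. The difficulty is genuine: without nested isotropies there may be no single vertex orbit $Gv_j$ admitting $G$-maps from all the others, so $G\sigma$ need not be $G$-contractible to one orbit.

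The paper sidesteps this by not attempting a global retraction. Instead it takes the open-star cover $\{\tilde V_j\}$ of $K=G\sigma$ and observes that each $\tilde V_j$ is $G$-contractible to its own vertex orbit $Gv_j$ (this needs only condition R1), and that each nonempty intersection $\tilde V_{j_1}\cap\cdots\cap\tilde V_{j_k}$ is $G$-contractible to the orbit of the corresponding barycenter. A Mayer--Vietoris argument then gives $\widetilde h^*_G(K)=0$, contradicting $|u|\ge i\ge 1$. This argument never needs a comparison between different vertex isotropies, which is why no ordering hypothesis appears. If you replace your appeal to Lemma~\ref{nonempty itersection implies cat_G=1} with this direct Mayer--Vietoris step, your proof becomes complete and essentially coincides with the paper's.
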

\begin{proof}
Since $|u| \geq i$, we have  $\dim(X) = \dim(X/G)\geq i$.
From  Corollary \ref{coro:ct(X^*) < ct_G(X)} we get $\ct_G(i)\geq \ct(i) \ge i+2$.
To complete the proof we  make an  adaptation of   \cite[Proposition 3.1]{K-W}. Indeed, let $K$ be
a complex with simplicial regular action of $G$ which is  the nerve of a good regular $G$-cover
$\mathcal{U}$ of $X$.   By our assumption $K/G$ is a complex of dimension $\geq i$. Let
$\tilde{\mathcal{U}}^*$ be the induced good cover of the orbit space. If $\ct(|K|/G) = \ct(X/G)= i+1$, then
the complex $K/G$ consists of one $i$-dimensional simplex $\sigma^*$. This means that
$K =\pi^{-1}(\sigma^*) = G \sigma$ is the orbit of an $i$-dimensional simplex
$\sigma =[v_0,\,v_1,\,\dots\,,v_{i}] \subset K$. We denote $ G \sigma$ by $\tilde{\sigma}$. Since for every
subgroup $H\subset G$ the set $K^H$ is a sub-complex of $K \tilde{\sigma}$ and $H^\prime \subset H$ implies
$K^{H} \subset K^{H^\prime}$, the isotropy types of vertices are minimal (thus isotropy groups maximal!)
and for  any face of simplex $\tau \in K$ the isotropy types of each its face are smaller or equal to that
of $\tau$ as the action is given by
$$ g[t_0\,v_0, t_1\,v_1,  \dots\,, t_{i}\,v_{i_1}]= [t_0\,gv_0, t_1\,gv_1,  \dots\,, t_{i}\,gv_{i}]$$
and $G$ permutes vertices.

Let us form an invariant and closed cover of $K=\tilde{\sigma}$ by taking sets $\tilde{V}_j= G V_j$, where
$$V_j=\{[t_0\,v_0, t_1\,v_1,  \dots\,, t_{i}\,v_{i}]: \sum_{j=0}^{i} t_j =1, \; \; {\text{and}}\; t_j
\leq \frac{2}{3}\}\,.$$
Observe that $\tilde{V}_j$ is $G$-contractible to $\tilde{v}_j = G v_j$. Moreover, for every $k$-tuple,
$2\leq k \leq i$ the intersection $\cap \tilde{V}_j$ is $G$-contractible to the saturation of  barycenter
of the vertices corresponding to this tuple. In particular $\tilde{V}_1\cap \tilde{V}_2$ is $G$-contractible
to  the saturation of $[\frac{1}{2} v_1,\frac{1}{2} v_2, 0, \dots,0]$.
	
It shows that for every non-trivial intersection  $\tilde{h}_G^*(\bigcap \tilde{V}_j) = 0$.
The Mayer-Vietoris sequence argument gives $\tilde{h}^*_G(K)=0$ which contradicts $|u| \geq i$.
This shows that $\ct(|K|/G) \geq i+2$, and consequently $\ct_G(K)\geq i +2$
\end{proof}

We are ready to prove the main result of this section, an 'arithmetic' estimate for the covering type of
an $n$-tuple:

\begin{theorem}\label{thm:arithmetic}
$$ \ct_G(i_1,\ldots\,i_n)\geq  i_1 + 2\, i_2 + \, \cdots\, + n i_n + (n+1) $$
\end{theorem}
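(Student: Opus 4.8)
The plan is to argue by induction on $n$, at each step peeling off one large $G$-categorical set and invoking the two-factor Lemma \ref{product of two elements 2}. First I would reduce to a convenient model: an essential $(i_1,\dots,i_n)$-product is preserved by $G$-homotopy equivalences, since the equivariant cohomology rings are isomorphic, so I may replace $X$ by a $G$-homotopy equivalent space carrying a strictly regular good $G$-cover $\mathcal U=\{\tilde U_s\}$ whose number of orbits equals $N:=\ct_G(X)$. Working with a strictly regular cover is what lets me apply Lemma \ref{nonempty itersection implies cat_G=1} without assuming a linear ordering of orbit types, as noted in the remark following Theorem \ref{thm:estimate by genus}. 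Fix classes $u_k\in h^*_{G,i_k}(X)$ with $u_1\cdots u_n\neq 0$; since each $i_k\ge 1$ they all lie in $\widetilde h^*_G(X)=h^*_{G,1}(X)$. The base case $n=1$ is precisely Lemma \ref{lem:ct > i implies dim > i+2}, which gives $\ct_G(X)\ge i_1+2$, matching the formula.

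For the inductive step, set $D:=i_1+\dots+i_n$. Because the filtration is multiplicative, $h^*_{G,s}(X)\cdot h^*_{G,s'}(X)\subseteq h^*_{G,s+s'}(X)$, the full product satisfies $u_1\cdots u_n\in h^*_{G,D}(X)$, so it restricts to $0$ on $X^{(D-1)}$; as it is non-zero this forces $\dim(X/G)\ge D$ (otherwise $X^{(D-1)}=X$ and the product would vanish). Hence the nerve of the cover induced on $X/G$ contains a $D$-simplex, i.e.\ there are $D+1$ orbits $\tilde U_{s_0},\dots,\tilde U_{s_D}$ of $\mathcal U$ with $\tilde U_{s_0}\cap\dots\cap\tilde U_{s_D}\neq\emptyset$. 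By Lemma \ref{nonempty itersection implies cat_G=1} their union $\tilde\sigma:=\tilde U_{s_0}\cup\dots\cup\tilde U_{s_D}$ is $G$-categorical.

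Next I would peel $\tilde\sigma$ off. Let $V$ be the union of the remaining $N-(D+1)$ orbits, an open $G$-invariant set with $X=\tilde\sigma\cup V$. Applying Lemma \ref{product of two elements 2} with $u:=u_1$ and $v:=u_2\cdots u_n$ (so that $u\cdot v=u_1\cdots u_n\neq 0$ and the categorical piece is $\tilde\sigma$) yields $i_V^*(u_2\cdots u_n)\neq 0$ in $h^*_G(V)$. Since restriction to an open $G$-invariant subspace does not lower filtration degree, each $i_V^*(u_k)$ still lies in $h^*_{G,i_k}(V)$, so $V$ admits an essential $(i_2,\dots,i_n)$-product; moreover the remaining orbits form a good $G$-cover of $V$, whence $\ct_G(V)\le N-(D+1)$. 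The induction hypothesis for the $(n-1)$-tuple $(i_2,\dots,i_n)$ then gives
$$N-(D+1)\ \ge\ \ct_G(V)\ \ge\ \sum_{k=2}^{n}(k-1)\,i_k+n.$$
Substituting $D=\sum_{k=1}^{n}i_k$ and using $D+\sum_{k=2}^{n}(k-1)i_k=\sum_{k=1}^{n}k\,i_k$ produces $N\ge\sum_{k=1}^{n}k\,i_k+(n+1)$, as required.

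The main obstacle is identifying the correct object to peel: not a single cover element, which would only lower the count by one and discard all degree information, but an entire top-dimensional simplex of the nerve, whose $D+1$ orbits precisely account for the total degree $D$ of the product. Two facts make this succeed: first, such a simplex is $G$-categorical \emph{as a whole} (Lemma \ref{nonempty itersection implies cat_G=1}), so that Lemma \ref{product of two elements 2} may be applied with it in the role of the categorical piece; second, the surviving factor $u_2\cdots u_n$ retains its filtration degrees upon restriction to the open complement. The delicate points I expect to need care are the verification that $\dim(X/G)\ge D$ and that a genuine $D$-simplex is present, for which the non-degeneracy of the product together with Corollary \ref{coro:ct(X^*) < ct_G(X)} and the dimension estimate underlying Lemma \ref{lem:ct > i implies dim > i+2} are used, and the claim that restriction to $V$ preserves filtration degree, which follows from $V^{(s-1)}=V\cap X^{(s-1)}$.
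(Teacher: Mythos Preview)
Your argument is correct and follows the same strategy as the paper: induct on $n$ with base case Lemma~\ref{lem:ct > i implies dim > i+2}, peel off a $G$-categorical union of $i_1+\cdots+i_n+1$ orbits coming from a top-dimensional simplex of the nerve (using Lemma~\ref{nonempty itersection implies cat_G=1} or, equivalently, the Nerve theorem), and apply the inductive hypothesis to the complement $V$. The only difference is cosmetic---you discard the factor $u_1$ whereas the paper discards $u_n$---and your choice in fact makes the final arithmetic land exactly on the stated bound.
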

\begin{proof}
We will induct on the number of elements in the sequence $(i_1,\ldots\,i_n)$. The case $n=1$ is covered by
Lemma \ref{lem:ct > i implies dim > i+2}. Assume that the estimate holds for all sequences of length
$(n-1)$ and consider classes $u_1\in\widetilde h_{G,i_1}^*(X),\ldots,u_n\in\widetilde h_{G,i_n}^*(X)$
such that the product $u_1 \cdots u_n \in h_{G,i_1+\ldots +i_n}^*(X)$ is non-trivial.
By the equivariant Nerve theorem  \ref{thm:G-nerve homotopy equivalence} in every good $G$-cover $\UU$
there exists a $G$-invariant subset $\UU'{\overset{G}\subseteq} \UU$, containing at least
$(i_1+\ldots+i_n+1)$ orbits of sets that intersect non-trivially.
Let us denote by $U$ and $V$ respectively the unions of elements
of $\UU'$ and of $\UU-\UU'$. Again by
Nerve theorem, the set $U$ is $G$-contractible and thus $G$-categorical.
Then the restriction of $u_n$ on $U$ is trivial, so by Lemma \ref{lem:subproducts} the restriction of
$u_1\cdots u_{n-1}$ on $V$ is non-trivial. By the inductive assumption $\UU$ has at least
$$(i_1+2i_2+\ldots + (n-1)i_{n-1}+n)+(i_1+\ldots+i_n+1)=$$
$$= i_1+2 i_2+\ldots+n i_n + (n+1), $$
orbits of elements. 
\end{proof}

\begin{rema}
Note that the assumption of Theorem \ref{thm:arithmetic} does not require any condition on the orbit types
appearing in $X$.  This extends essentially applications of this theorem.
\end{rema}

As a first application of our results, let $P(V)$ be the projectivization of a complex $(n+1)$-dimensional
representation of $G$. The action  of $G$ on $V$ induces an action on $P(V)$, since
$g(\lambda \, v) = \lambda \, g(v)$ for $\lambda \in \bs^1 \subset \bc$.
The $G$-equivariant $K$-theory of $P(V)$ can be described as
$$K_G^0(P(V)) = \br(G)[\eta]/e(V)\,,$$
where $\br(G)$ is a representation ring of $G$ and $e(V)$ is an ideal in $\br(G)$  generated by
the element $ {\underset{i=0}{\overset{n}\sum}} \, (-1)^i \wedge^i (V) \, \eta^{n+1-i}$
(see Bartsch \cite{Bartsch} for details). Here $\eta$ denotes a $G$-vector bundle conjugated
to the $G$-Hopf bundle over $P(V)$. Furthermore $K_G^1(P(V)) = 0$.
By Theorem \ref{thm:arithmetic} we have the following estimate.

\begin{theorem}\label{thm:estimate for projective space}
Let $V$ be a complex representation of a finite group  $G$ of complex dimension $(n+1)$ and let
$P(V)$ be the projective space of $V$. Then
$$ \ct_G(P(V)) \,\geq (n+1)^2\,.$$
\end{theorem}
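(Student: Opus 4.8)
My plan is to feed one repeatedly-multiplied cohomology class into Proposition~\ref{prop:coho est} and Theorem~\ref{thm:arithmetic}, imitating the non-equivariant computation that gives $\ct(\bc P^n)=(n+1)^2$. Here $h^*_G=K^*_G$, and the candidate class is the reduced conjugate Hopf class $\xi:=\eta-1\in K_G^0(P(V))$, where $\eta$ is the generator appearing in $K_G^0(P(V))=\br(G)[\eta]/e(V)$ and $1$ is the class of the trivial line bundle. Since $\eta$ is a $G$-line bundle, its restriction to every orbit has virtual rank $1$, so $\xi$ has rank $0$ on every orbit and hence lies in the reduced theory $\widetilde h^*_G(P(V))=\bigcap_{\phi\colon G\to X}\ker\phi^*$. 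The three facts I need are: (i) $\xi\in\widetilde h^*_G(P(V))$; (ii) its filtration degree from Definition~\ref{degree of element} satisfies $|\xi|\ge 2$; and (iii) the $n$-fold product $\xi^n$ is non-trivial.

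Fact (iii) is the easiest. The generator $\sum_{i=0}^{n}(-1)^i\wedge^i(V)\,\eta^{n+1-i}$ of $e(V)$ is monic of degree $n+1$ in $\eta$, with leading term $\eta^{n+1}$ because $\wedge^0(V)$ is the trivial representation; thus $K_G^0(P(V))$ is a free $\br(G)$-module with basis $1,\eta,\dots,\eta^{n}$. Expanding $\xi^n=(\eta-1)^n$ produces a polynomial in $\eta$ of degree $n$ whose leading coefficient is $1\in\br(G)$, and since $n<n+1$ no reduction modulo $e(V)$ takes place, so $\xi^n\neq 0$.

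Fact (ii) is where I expect the genuine work, and it is the main obstacle. I would show $\xi\in h^{*}_{G,2}(P(V))$ by identifying $\xi$ with the equivariant first Chern class of $\eta$ in the associated graded of the filtration~(\ref{filtration}): on the $1$-skeleton $\pi^{-1}\big((P(V)/G)^{(1)}\big)$ the line bundle $\eta$ is equivariantly trivial, so $\xi$ restricts to $0$ there and $|\xi|\ge 2$; the first non-trivial graded piece, corresponding to $c_1(\eta)$, shows in fact $|\xi|=2$, although only $|\xi|\ge 2$ is needed below. The delicate point is matching the skeletal filtration of $K_G^{*}$ with the Chern-class (Atiyah–Hirzebruch) picture in the equivariant setting, which is precisely where the orbit-type structure of the $G$-action on $P(V)$ must be controlled.

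With (i)--(iii) established I conclude by taking $u_1=\cdots=u_n=\xi$ in Proposition~\ref{prop:coho est}. Their product $\xi^n\neq 0$ consists of $n$ factors of degree $|u_k|=|\xi|\ge 2$, so Theorem~\ref{thm:arithmetic} gives
$$\ct_G(P(V))\ \ge\ \ct_G(\underbrace{2,\dots,2}_{n})\ \ge\ \sum_{k=1}^{n}2k+(n+1)=n(n+1)+(n+1)=(n+1)^2,$$
which is the asserted bound.
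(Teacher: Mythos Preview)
Your overall strategy---exhibit a class of filtration degree $\ge 2$ whose $n$-th power is non-zero, then invoke Theorem~\ref{thm:arithmetic}---is exactly the paper's, and step~(iii) together with the final arithmetic $\sum_{k=1}^n 2k+(n+1)=(n+1)^2$ is correct. The genuine gap is in (i) and~(ii).

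For (i) you argue that $\xi=\eta-1$ has virtual rank~$0$ on every orbit and conclude $\xi\in\widetilde h^*_G(P(V))$. But $\widetilde h^*_G=h^*_{G,1}$ requires $\xi$ to \emph{vanish} in $K_G^*(G/G_x)=R(G_x)$, not merely to lie in the augmentation ideal. The restriction of $\eta$ to the orbit through $x_0$ is the line $\ell_{x_0}\subset V$ regarded as a one-dimensional $G_{x_0}$-representation, and this need not be the trivial character. Concretely, for $G=\bz_2$ acting on $V=\bc\oplus\bc_{-}$ (trivial plus sign representation) the space $P(V)\cong S^2$ has two fixed points, and $\eta-1$ restricts to $0$ at one of them and to $t-1\ne 0\in R(\bz_2)$ at the other; hence $\xi\notin h^*_{G,1}$, and a fortiori $\xi\notin h^*_{G,2}$. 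The same example refutes the key assertion in (ii): the $G$-line bundle $\eta$ is \emph{not} equivariantly trivial on the $1$-skeleton, because equivariant triviality over an orbit means trivial isotropy character, not merely topological triviality of the underlying bundle. So the ``delicate point'' you flag is not just hard to justify---it is false as stated.

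The paper does not argue directly that a single $K$-class restricts to zero on the $1$-skeleton. Instead it passes through S\l omi\'nska's equivariant Chern character
\[
ch_G\colon K_G^j(X)\otimes\bq\ \longrightarrow\ \bigoplus_{i}H^{2i+j}_{Br,G}(X;\mathcal K\otimes\bq),
\]
which is a rational ring isomorphism. Since the $1$-skeleton $P(V)^{(1)}$ is one-dimensional, the only even-degree Bredon group surviving on it is $H^0$; the filtration question thereby reduces to a representation-theoretic computation of an $H^0$-component at each orbit, rather than a bundle-triviality claim. That passage through Bredon cohomology with representation-ring coefficients is precisely the ingredient your argument lacks, and it is what lets the paper control the skeletal filtration of $K_G^*$ in a situation where $\eta-1$ itself visibly fails to lie in $h^*_{G,2}$.
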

\begin{proof} First note that from the description of $K_G^0(P(V))$ it follows that
$ \eta^n= \eta \, \eta \, \cdots \,\eta \neq 0  $.
Next we have to show that the degree  $|\eta|$ is $2$.  A natural geometrical approach is to construct a
$G$-CW-complex structure on $P(V)$ in such a way that
$P(V_1)\subset P(V_1\oplus V_2)\, \subset \cdots\, \subset P(V)$, is an ascending filtration of $P(V)$ by
$G$ sub-complexes  given by the decomposition $V= V_1\oplus \, \cdots\, V_s$  into irreducible factors.
So for the $1$-$G$-skeleton we have  $P(V)^{(1)} \subset P(V_1)$. But then we have to do further analysis.

J. S{\l}omi{\'n}ska \cite{Slominska} constructed an equivariant version of the Chern character
$$ch_G: K^j_G(X) \to {\underset{i=0}{\overset{\infty} {\, \oplus\,}}} H^{2i+j}_{Br,G}(X;\mathcal{K})\,,\, j
= 0,\,1 \,,$$
where $H^{2i+j}_{Br,G}(X;\mathcal{K})$ is the singular equivariant Bredon cohomology with the system
$\mathcal{K}(G/H)= R(H)$ induced by the  $K_G$-theory.
By its construction,    $ H^{2i+j}_{Br,G}(X;\mathcal{K})=0$ for
$2i+j > \dim X$.
Moreover, in \cite{Slominska} it is shown that
$$ ch_G\otimes_\bq {\rm id}_\bq : K^j_G(X)\otimes \bq  \to {\underset{i=0}{\overset{\infty}
{\, \oplus\,}}} H^{2i+j}_{Br,G}(X;\mathcal{K}\otimes \bq )\,$$
is an isomorphism.

Let us show that $|e(V)| \geq 1 $. The restriction
$e(V)|_{P(V)^{(0)}}$ to any orbit $Gx_0=G/G_{x_0}$, $Gx_0 \in P(V)^{(0)}$  is given by the restriction of
$\eta$ to the fiber at $x_0$ (we view $V$ as a representation of $G_x$). In other words we may substitute
$\eta$ and the skew-symmetric powers of $V$ to the formula defining $e(V)$. This shows that
$e(V)|_{P(V)^{(0)}}= 0$, i.e. $|e(V)|\geq 1$.
Taking $X=P(V)^{(1)}$  we see that the image of  $K^*_G(P(V))\to  K_G^*(X)$ is the same as the image of
$ {\underset{i=0}{\overset{\infty} {\, \oplus\,}}} H^{2i+j}_{Br,G}(P(V);\mathcal{K}\otimes \bq )$ in
$ {\underset{i=0}{\overset{\infty} {\, \oplus\,}}} H^{2i+j}_{Br,G}(X;\mathcal{K}\otimes \bq )$.
But $ {\underset{i=0}{\overset{\infty} {\, \oplus\,}}} H^{2i+j}_{Br,G}(X;\mathcal{K}\otimes \bq )
= H^{0}_{Br,G}(X;\mathcal{K}\otimes \bq )$, since $ \dim X =1$.
On the other hand, $ch_G(\eta) = 0$ in  $H^{0}_{Br,G}(X;\mathcal{K}\otimes \bq )$ as we showed above.
Consequently $e(V)|_{P(V)^{(1)}} = 0$ which means that $|e(V)|\geq 2$.

By Theorem \ref{thm:arithmetic} we get $\ct_G(P(V)) \geq (n+1)^2 \,.$
\end{proof}

The topological dimension $\dim P(V) $ is equal to $d= 2n$, so we can express the covering type of $P(V)$
in terms of its geometric dimension as  $\ct_G(P(V)) \geq \frac{(d+2)^2}{4}$.

Another class of examples to which our methods could be applied are $\bz_p^k$-actions on
$F_p$-cohomology spheres and on spheres with different differential structures.
In this situation the most appropriate generalized cohomology theory is the Borel cohomology.

We begin with the following technical lemma.

\begin{lemma}\label{lem:relative ct}
Let $X$ be a compact $G$-CW-complex, such that $X^G \neq \emptyset$ and assume that $X$ has a finite regular
good $G$-cover $\mathcal{U} =  \{\tilde{U}_i\}, \, i\in I$ of cardinality $|I|= \ct_G(X)$. Furthermore, let
$\mathcal{U}^G$ be the subfamily of this cover consisting of $\{\tilde{U}_{i_s}\}, \, s\in S\subset I$ such
that $\tilde{U}_{i_s}\cap X^G \neq \emptyset $.

Then the orbit of every element of this subfamily $\tilde{U}_{i_s} \in \mathcal{U}^G $ consists of one
element $U_{i_s} = G U_{i_s}$, i.e. $U_{i_s}$ is $G$-invariant.  Moreover the cardinality
$|S|= \ct_G(X^G) = \ct(X^G)$.
Consequently the cardinality of the set of orbits of  family $\mathcal{U} \setminus \mathcal{U}^G$ is equal
to $ |I|-|S|= |J|$, where $J=I\setminus S$.

Furthermore $(X,X^G)$ is $G$-homotopy equivalent to a pair of  regular $G$-complexes $(K, K^G)$ with
the regular $G$-cover by the orbits of open  stars of vertices  $\mathcal{V}= \{\tilde{V}_i\}, \, i\in I$
and $\mathcal{V}^G= \{\tilde{V}_{i_s}\}, \, s\in S$, with $|I|=\ct_G(K)$,$|S|\geq \ct(K^G)$,   for which
$$ {\underset{j\in J}{\,\bigcup\,}} \tilde{V}_j \,=\, K\setminus K^G \,.$$
\end{lemma}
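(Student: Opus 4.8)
The plan is to reduce everything to the combinatorics of the nerve, where the set-theoretic identities become transparent, and then to extract the cardinality equality $|S|=\ct(X^G)$ from the minimality of $\mathcal U$. First I would dispose of the $G$-invariance claim. Fix $s\in S$ and a fixed point $x_0\in\tilde U_{i_s}\cap X^G$. Writing $\tilde U_{i_s}=GU_{i_s}$, we have $x_0\in g_0U_{i_s}$ for some $g_0$; since $gx_0=x_0$ for all $g\in G$, the point $x_0$ lies in every translate $(gg_0)U_{i_s}$. By condition RC1 of Definition \ref{defi:regular cover} two translates of $U_{i_s}$ are either equal or disjoint, so they all coincide and $U_{i_s}$ is $G$-invariant; hence its orbit is the single set $U_{i_s}$, as claimed.

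Next I would pass to $K:=|\mathcal N(\mathcal U)|$, which is $G$-homotopy equivalent to $X$ by the equivariant Nerve theorem \ref{thm:G-nerve homotopy equivalence} and whose open stars form a regular good $G$-cover $\mathcal V=\{\tilde V_i\}_{i\in I}$. A vertex of $K$ (an element of $\mathcal U$) is $G$-fixed exactly when its set is $G$-invariant, and since $K$ is regular, condition R1 of Definition \ref{defi:regular simplicial} forces $|K|^G=|K^G|$, the realization of the full subcomplex $K^G$ on the fixed vertices. The fixed vertices are precisely the $S$-sets: every cover element is contractible, hence connected (Proposition \ref{prop:comparison of Definitions} with $H=\{e\}$), so a $G$-invariant element, being $G$-contractible to a finite and therefore discrete orbit, must retract onto a single point, which is then $G$-fixed, whence the element meets $X^G$; conversely the $S$-sets are $G$-invariant by the first step. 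Consequently $J=I\setminus S$ indexes the non-fixed vertices, and a point of $K$ lies in some star $\tilde V_j$, $j\in J$, iff its carrier simplex has a non-fixed vertex, iff it lies off $|K^G|$. This proves the key identity $\bigcup_{j\in J}\tilde V_j=K\setminus K^G$ and gives the count $|J|=|I|-|S|$.

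It remains to identify $|S|$ with $\ct(X^G)$. Since $G$ acts trivially on $X^G$ we have $\ct_G(X^G)=\ct(X^G)$, and the restricted family $\{U_{i_s}\cap X^G\}_{s\in S}$ is a good cover of $X^G$ by Proposition \ref{prop:comparison of Definitions}, so $\ct(X^G)\le|S|$. For the reverse inequality I would run a swap argument enabled by the identity above. The $S$-stars cover the open $G$-invariant neighborhood $O_S:=\bigcup_{s\in S}\tilde V_{i_s}\supseteq K^G$, which $G$-deformation retracts onto $K^G$, so $\ct_G(O_S)=\ct_G(K^G)=\ct(X^G)$. Replacing the $S$-stars by a minimal good $G$-cover of $O_S$ and retaining the $J$-stars produces, since $O_S\cup(K\setminus K^G)=K$, a good $G$-cover of $K$ with $\ct(X^G)+|J|$ orbits. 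Minimality $|I|=\ct_G(K)=\ct_G(X)$ then forces $|S|\le\ct(X^G)$, hence equality. Finally, the $G$-homotopy equivalence $X\simeq_G K$ restricts to $X^G\simeq|K|^G=|K^G|$, so $(X,X^G)\simeq_G(K,K^G)$, and $\ct_G(K)=|I|$ together with $\ct(K^G)=\ct(X^G)=|S|$ follow from $G$-homotopy invariance of $\ct_G$.

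The hard part will be justifying that the cover assembled in the swap step is genuinely a good $G$-cover: one must verify that every nonempty intersection of a set of the pulled-back cover of $O_S$ with a retained $J$-star is again $G$-contractible and regular. The clean way to control this is to realize the retraction of $O_S$ onto $K^G$ simplicially, for instance by passing to a barycentric subdivision so that $O_S$ becomes a regular neighborhood of $K^G$ and the new sets are unions of stars, so that the new intersections are governed by intersections already present in $\mathcal V$.
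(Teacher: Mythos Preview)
Your argument for the parts the paper actually proves---$G$-invariance via RC1, passage to the nerve $K$ via Theorem~\ref{thm:G-nerve homotopy equivalence}, identification of the fixed vertices with the $S$-indexed ones, the set identity $\bigcup_{j\in J}\tilde V_j=K\setminus K^G$, and the inequality $|S|\ge\ct(X^G)$ from restricting the cover---is correct and matches the paper's approach, though you supply considerably more detail (in particular your justification that a $G$-invariant cover element must meet $X^G$, which the paper leaves implicit).

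Where you go beyond the paper is in attempting to prove the \emph{equality} $|S|=\ct(X^G)$. You should be aware that the paper's own proof establishes only $|S|\ge\ct(X^G)$; the reverse inequality is not proved there, and indeed the equivalent assertion $\sct_G(X,X^G)+\ct(X^G)=\ct_G(X)$ is explicitly recorded immediately afterward as Conjecture~\ref{conjecture on relative ct}. So the equality in the lemma's statement is stronger than what the paper's proof delivers.

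Your swap argument toward that equality has a genuine gap beyond the one you flag. You write $\ct_G(O_S)=\ct(X^G)$ and then ``replace the $S$-stars by a minimal good $G$-cover of $O_S$'', but what you need to bound $\sct_G(K)$ from above is a good $G$-cover \emph{of $O_S$ itself}, i.e.\ a witness to $\sct_G(O_S)\le\ct(X^G)$, not merely $\ct_G(O_S)\le\ct(X^G)$. The latter only guarantees a small good $G$-cover on some space $G$-homotopy equivalent to $O_S$, which you cannot splice with the $J$-stars sitting inside $K$. Even granting a cover of $O_S$ of the right size, you correctly note the second obstacle: verifying that mixed intersections with the $J$-stars are $G$-contractible and that regularity survives. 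Neither issue is resolved by passing to a barycentric subdivision in any obvious way, and this is precisely why the paper leaves the equality as a conjecture.
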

\begin{proof}  The fact that each member of the subfamily $ {\underset{j\in I\setminus S}{\,\bigcup\,}}
\tilde{V}_j \,=\, K\setminus K^G \,$ of  $ \mathcal{U}^G $ is $G$-invariant follows from condition RC1)
of Definition \ref{defi:regular cover} of the regular cover.

The $G$-homotopy equivalence of the last part of statement follows from  the $G$-nerve theorem
(Theorem \ref{thm:G-nerve homotopy equivalence}), which also gives a one-to-one correspondence between
$\mathcal{U} =  \{\tilde{U}_i\}, \, i\in I$  and  orbits of vertices of  $K$, thus  orbits of the good
regular $G$-cover $\mathcal{V} =  \{\tilde{V}_i\}, \, i\in I$ given by stars of vertices, and between
$ \mathcal{U}^G$ and $\mathcal{V}^G$ respectively. From the construction of $\mathcal{V}$ it follows that
$ {\underset{j\in I\setminus S}{\,\bigcup\,}} \tilde{V}_j \,=\, K\setminus K^G \,.$

To show that $|S|\geq \ct_G(X^G) = \ct(X^G)$ we  use  the $G$-cover of the already proved part of statement.
Since   the sets $ \{\tilde{U}_{i_s}\}_{s\in S} \cap |K^G|$ form a good cover of $|K^G|$, we have
 $|S| \geq \ct(|K|^G)$.
\end{proof}

To study the equivariant covering type of $G$-spaces with a non-empty fixed point set we  need the relative,
modulo $X^G$  versions of $\ct_G$ and  essential product. As before, we assume that
$X$ has a structure of a $G$-CW-complex.

\begin{defi}\label{def:relative ct} For a pair $ X^G \subset X $, $X$ a $G$-CW complex we define the relative
strict $G$-covering type of the pair $(X, X^G)$ as
$$ \sct_G(X, X^G) \,=  \sct(X\setminus X^G) \,.$$
\end{defi}

First note that $\sct_G(X,X^G)$ is finite and $\sct_G(X,^G) +\ct(X^G) \leq \ct_G(X)$ as follows from Lemma
\ref{lem:relative ct}.
We conjecture that
\begin{conj}\label{conjecture on relative ct}
$$\sct_G(X,X^G) +\ct(X^G) =  \ct_G(X)$$
\end{conj}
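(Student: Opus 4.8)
The plan is to establish the reverse inequality $\ct_G(X) \le \sct_G(X,X^G) + \ct(X^G)$; combined with the bound $\sct_G(X,X^G) + \ct(X^G) \le \ct_G(X)$ already derived from Lemma~\ref{lem:relative ct}, this yields the asserted equality. Since $\ct_G$ is a minimum over the $G$-homotopy type, it suffices to exhibit \emph{one} $G$-space $Y \overset{G}{\simeq} X$ together with a regular good $G$-cover of $Y$ having at most $\sct_G(X,X^G) + \ct(X^G)$ orbits.

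First I would fix good models for the two strata. By the Nerve theorem there is a finite complex $L$ homotopy equivalent to $X^G$ whose open stars form a good cover $\mathcal W = \{W_1,\dots,W_p\}$ with $p = \ct(X^G)$, and by Definition~\ref{def:relative ct} the honest space $X\setminus X^G$ carries a regular good $G$-cover $\mathcal V = \{\tilde V_1,\dots,\tilde V_q\}$ with $q = \sct_G(X,X^G)$ orbits. The goal is to assemble these into a single cover. The geometric input is an equivariant collar: because $X^G$ is a subcomplex of the $G$-CW complex $X$, it has an invariant open neighbourhood $N$ with a $G$-deformation retraction $r\colon N \to X^G$ whose fibres are cones on the normal representation spheres (on which $G$ fixes only the cone point). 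Replacing $X^G$ by the homotopy-equivalent model $|L|$ and pulling this collar back along the equivalence $|L|\to X^G$ produces a $G$-space $Y \overset{G}{\simeq} X$ in which $Y^G = |L|$ sits inside an invariant collar $C \cong Y^G \times \mathrm{cone}$, while $Y\setminus C$ is $G$-homeomorphic to a compact $G$-deformation retract of $X\setminus X^G$.

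On $Y$ I would take the two families $\{r^{-1}(W_1),\dots,r^{-1}(W_p)\}$, each $G$-invariant and $G$-contractible to a fixed point (since $r^{-1}(W_i)$ retracts onto the contractible $W_i \subset Y^G$), together with a transport of the free cover $\mathcal V$ chosen to cover $Y\setminus C'$, where $C' \subset C$ is a slightly smaller collar arranged so that the two families cover $Y$. The number of orbits is then exactly $p + q = \ct(X^G) + \sct_G(X,X^G)$, so the conclusion follows once this union is verified to be a regular good $G$-cover, via Proposition~\ref{prop:comparison of Definitions} and Definition~\ref{defi:G-good cover II}.

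The hard part — and the reason the statement is only conjectured — is precisely this last verification in the transition region $C\setminus C'$. The sets $r^{-1}(W_i)$ are ``radial'' over the fixed stratum while the sets of $\mathcal V$ are adapted to the free stratum, and there is no reason a priori that their mutual intersections are $G$-contractible, nor that the resulting nerve satisfies the regularity conditions RC1)--RC2); reconciling the two minimal covers on the overlap may force a subdivision or the insertion of extra orbits, which would destroy minimality and yield only an inequality with an error term. A promising route to control this is to argue inductively along the orbit-type filtration, peeling off $X^G$ exactly as the minimal stratum is peeled off in the proof of Theorem~\ref{teo: ct_G of graph}: there the collar retraction was used to pull a minimal cover of the lower stratum back to its neighbourhood without waste, and the crux is to show that this ``no-waste'' phenomenon persists in higher dimensions, i.e. that a minimal good cover of $X^G$ and a minimal regular good $G$-cover of $X\setminus X^G$ can always be chosen mutually compatible across a single collar.
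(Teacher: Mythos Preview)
The statement you are attempting to prove is labeled \textbf{Conjecture}~\ref{conjecture on relative ct} in the paper; the authors do \emph{not} prove it and offer no argument beyond the one-sided inequality $\sct_G(X,X^G)+\ct(X^G)\le\ct_G(X)$ deduced from Lemma~\ref{lem:relative ct}. There is therefore no ``paper's own proof'' to compare your proposal against.

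As for the proposal itself, you correctly isolate the missing direction and candidly identify the obstruction: the two minimal covers, one radial over $X^G$ and one adapted to the free part, need not have $G$-contractible mutual intersections in the collar region, and forcing compatibility may require extra orbits. This is exactly why the authors left the equality as a conjecture. Your analogy with the one-dimensional case in Theorem~\ref{teo: ct_G of graph} is apt but does not transfer: there the collar of a stratum is a disjoint union of half-open edges, so pulling back a minimal cover along the retraction creates no new intersection patterns; in higher dimensions the link of $X^G$ is a positive-dimensional $G$-space and the pullback sets $r^{-1}(W_i)$ will in general meet the free-stratum sets $\tilde V_j$ in pieces whose $G$-homotopy type you have no control over. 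So what you have written is a reasonable outline of how one \emph{might} attack the conjecture, together with an honest statement of the gap, but it is not a proof, and the paper does not claim one either.
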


\begin{defi}\label{def:relative strict G covering type}
Given an $n$-tuple of positive integers $i_1,\ldots,i_n \in \NN$ we say that a $G$-space $X$ with
$X^G\neq \emptyset$ \emph{admits an essential relative $(i_1,\ldots,i_n)$-product} in $h_G^*$
if there are cohomology classes $u_k \in h_{G,i_k}^*(X, X^G)$, such that the product  
$u_1\cdots u_n$ is non-trivial.
\end{defi}

Clearly, if $X$ admits an essential relative $(i_1,\ldots,i_n)$-product then so does every  space
$Y$ that is $G$-homotopy equivalent to $X$, since their cohomology rings are isomorphic.

The following proposition follows immediately from the definition.

\begin{proposition}\label{prop:coho est 2}
$$ \sct_G(X, X^G) \geq \max \{\ct_G(|u_1|,\ldots,|u_n|)\mid  0\neq u_1\cdots u_n
\in h^*_G(X,X^G)\}$$
\end{proposition}

From now on, we proceed as in the proof of first part of statement modifying  each argument to the relative
case. Note that $h_{G,1}^*(X,X^G) = \widetilde h^*_G(X,X^G) = h^*_G(X,X^G)$ as ${\rm pt}
{\overset{G}{\,\subset\,}} X^G {\overset{G}{\,\subset\,}} X$.

\begin{lemma}\label{product of two elements}
Let $X\setminus X^G=U\cup V$ where $U,V$ are open and $G$-invariant in $X$, and let $u,\,u\in h^*_G(X,X^G)$
be cohomology classes whose product $u\cdot v$ is non-trivial in $h^*_G(X,X^G)$
If $U$ is $G$-categorical in $X\setminus X^G$ then $i_V^*(u)$ is a non-trivial element of $h_G^*(V)$
(here $i_V$ stands for the inclusion map $i_V: V {\overset{G}{\hookrightarrow}} X$).
\end{lemma}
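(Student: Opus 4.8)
The plan is to mirror, verbatim, the argument for Lemma~\ref{product of two elements 2}, carrying everything out relative to $X^G$. Arguing by contradiction, I would assume $i_V^*(v)=0$ in $h_G^*(V)$ and deduce that $u\cdot v=0$ in $h_G^*(X,X^G)$, contradicting the hypothesis. The only structural change is that the subspaces entering the relative long exact sequences must be enlarged by $X^G$: in place of the pairs $(X,U)$ and $(X,V)$ of the absolute case I would use $(X,U\cup X^G)$ and $(X,V\cup X^G)$. This is forced, since a class of $h_G^*(X,X^G)$ can only be pulled back from $h_G^*(X,A)$ when $X^G\subseteq A$; it is also exactly what makes the final product land in $h_G^*\bigl(X,(U\cup X^G)\cup(V\cup X^G)\bigr)=h_G^*(X,X)=0$, using $U\cup V=X\setminus X^G$.

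Concretely, I would first lift $v$. From the long exact sequence of the triple $(X,\,V\cup X^G,\,X^G)$,
\begin{equation*}
h_G^*(X,V\cup X^G)\xrightarrow{\;j_V^*\;}h_G^*(X,X^G)\xrightarrow{\;q_V^*\;}h_G^*(V\cup X^G,X^G),
\end{equation*}
it suffices to show $q_V^*(v)=0$. Since $V$ is open, $G$-invariant and disjoint from $X^G$, excision identifies $h_G^*(V\cup X^G,X^G)\cong h_G^*(V)$, under which $q_V^*(v)$ corresponds to $i_V^*(v)=0$; hence $v=j_V^*(\bar v)$ for some $\bar v\in h_G^*(X,V\cup X^G)$. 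For $u$ I would use categoricity: as $U$ is $G$-categorical in $X\setminus X^G$, the inclusion $U\hookrightarrow X\setminus X^G$ is $G$-homotopic to that of a single orbit $Gx$, so $i_U^*=r^*\circ i_{Gx}^*$ for the retraction $r$. As recorded just before the statement, $h_{G,1}^*(X,X^G)=h_G^*(X,X^G)$, so every class restricts trivially to every orbit; thus $i_{Gx}^*(u)=0$, whence $i_U^*(u)=0$, and the triple $(X,\,U\cup X^G,\,X^G)$ together with the same excision gives $\bar u\in h_G^*(X,U\cup X^G)$ with $u=j_U^*(\bar u)$. Naturality of the relative cup product then yields $u\cdot v=j_U^*(\bar u)\cdot j_V^*(\bar v)=j^*(\bar u\cdot\bar v)$, where $\bar u\cdot\bar v\in h_G^*\bigl(X,(U\cup X^G)\cup(V\cup X^G)\bigr)=h_G^*(X,X)=0$, so $u\cdot v=0$, the desired contradiction; therefore $i_V^*(v)\neq0$.

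The step I expect to be genuinely delicate is the excision identification $h_G^*(V\cup X^G,X^G)\cong h_G^*(V)$ (and its companion for $U$), which is what converts the relative obstruction $q_V^*(v)$ into the honest restriction $i_V^*(v)$. Naively it can fail: if $\overline V$ meets $X^G$ then $X^G$ is not open in $V\cup X^G$ and the map $h_G^*(V\cup X^G,X^G)\to h_G^*(V)$ need not be injective. The remedy is that $(X,X^G)$ is a $G$-CW pair, hence a $G$-cofibration, so $X^G$ possesses a $G$-invariant open neighbourhood $G$-deformation retracting onto it; excising $X^G$ through this collar supplies the isomorphism once $U$ and $V$ are chosen compatibly with the collar (which the freedom in selecting the good $G$-cover of $X\setminus X^G$ permits). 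Granting this, the remainder is a routine relabelling of the absolute argument of Lemma~\ref{product of two elements 2}.
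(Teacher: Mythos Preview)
Your argument is correct and follows the same route as the paper's proof: assume the restriction vanishes, lift both classes through the long exact sequences of the triples $(X,U\cup X^G,X^G)$ and $(X,V\cup X^G,X^G)$, and use naturality of the relative cup product to conclude $u\cdot v=0$. The only cosmetic discrepancy is that the paper (matching the letter of its statement) assumes $i_V^*(u)=0$ and uses categoricity of $U$ to kill $i_U^*(v)$, whereas you swap the labels; since categoricity of $U$ annihilates \emph{every} class of $h_G^*(X,X^G)$ upon restriction, both variants are valid and interchangeable. Your treatment is in fact more careful on two points the paper elides: you make explicit the excision $h_G^*(V\cup X^G,X^G)\cong h_G^*(V)$ needed to interpret the middle map as $i_V^*$, and you correctly record that the product $\bar u\cdot\bar v$ lands in $h_G^*\bigl(X,(U\cup X^G)\cup(V\cup X^G)\bigr)=h_G^*(X,X)=0$, whereas the paper writes $h_G^*(X,U\cup V)$, which is $h_G^*(X,X\setminus X^G)$ and not obviously zero.
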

\begin{proof}
Assume by contradiction that $i_V^*(u)=0$. Exactness of the cohomology sequence
$$h_G^*(X,X^G\cup V) \stackrel{j_V^*}{\longrightarrow} h_G^*(X,X^G) \stackrel{i_V^*}{\longrightarrow} h_G^*(V\cup X^G, X^G)$$
implies that there is a class $\bar u\in h_G^*(X,V\cup X^G)$ such that $j_V^*(\bar u)=u$.
Moreover $i_U^*(v)=0$, because $i_U\colon U\hookrightarrow X$ is $G$-homotopic to the inclusion of an orbit  $i_{Gx} \subset X$, so there
is a class $\bar v\in h_G^*(X,U\cup X^G)$ such that $j_U^*(\bar v)=v$.
Then $u\cdot v=j_V^*(\bar u)\cdot j_U^*(\bar v)$ is by naturality equal to the image of $\bar u\cdot\bar v\in h_G^*(X,U\cup V)=0$, therefore $u\cdot v=0$, which
contradicts the assumptions of the lemma.
\end{proof}

By inductive application of the above lemma we obtain the following:

\begin{lemma}
\label{lem:relative subproducts}
Let  $u_1,\,\cdots,\, u_n\in  h_G^*(X, X^G)$ be cohomology classes whose product $ u_1\cdots u_n$ is non-trivial, and let $X=U_1\cup\ldots\cup U_k\cup V$ where $U_1,\ldots,U_k$
are open, $G$-categorical subsets of $X$, and $V$ is open invariant  in $X$. Then the product of any $(n-k)$ different classes among $i^*_V(u_1),\ldots,i^*_V(u_n)$ is a non-trivial class in  $h_G^*(V)$.
\end{lemma}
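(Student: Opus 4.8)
The plan is to prove the statement by induction on the number $k$ of $G$-categorical sets, exactly mirroring the proof of the absolute version (Lemma \ref{lem:subproducts}) but feeding the relative two-element lemma (Lemma \ref{product of two elements}) into the induction. I would fix an arbitrary subset $S\subseteq\{1,\ldots,n\}$ with $|S|=n-k$ and write $T=\{1,\ldots,n\}\setminus S=\{t_1,\ldots,t_k\}$; it then suffices to show that $\prod_{j\in S} i_V^*(u_j)\neq 0$ in $h_G^*(V)$. The base case $k=0$ is immediate: then $T=\emptyset$, $X$ is covered by $V$ alone, and the product of all the restricted classes is simply $u_1\cdots u_n$, which is non-trivial by hypothesis.

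For the inductive step I would group the last $k-1$ categorical sets together with $V$ by setting $W:=U_2\cup\cdots\cup U_k\cup V$, so that $X\setminus X^G=U_1\cup W$ with $U_1$ open and $G$-categorical and $W$ open $G$-invariant. The key observation is that a product of several classes of $\widetilde h_G^*(X,X^G)=h_G^*(X,X^G)$ is again a single class in the filtered ring $h_G^*(X,X^G)$, so Lemma \ref{product of two elements} may be applied with the single factor $u_{t_1}$ playing the role of the class that is restricted trivially across the categorical set $U_1$, and the product $\prod_{j\neq t_1}u_j$ playing the complementary role. Since $u_{t_1}\cdot\prod_{j\neq t_1}u_j=u_1\cdots u_n\neq 0$, the lemma yields that $i_W^*\bigl(\prod_{j\neq t_1}u_j\bigr)$ is non-trivial in $h_G^*(W)$.

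Now $W$ is covered by the $k-1$ categorical sets $U_2,\ldots,U_k$ together with $V$, and it carries the $n-1$ classes $\{u_j:j\neq t_1\}$ whose product restricts non-trivially to $W$. By the induction hypothesis applied to $W$ (with $n-1$ classes and $k-1$ categorical sets), every product of $(n-1)-(k-1)=n-k$ distinct classes among $\{i_V^*(u_j):j\neq t_1\}$ is non-trivial in $h_G^*(V)$; here I use that $i_V$ factors as $V\hookrightarrow W\hookrightarrow X$, so restriction to $V$ may be computed inside $W$. Since $t_1\in T$ we have $S\subseteq\{1,\ldots,n\}\setminus\{t_1\}$, and because $|S|=n-k$ this gives $\prod_{j\in S} i_V^*(u_j)\neq 0$, completing the induction.

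The routine parts are purely formal, so I expect the only genuine point to verify is the relative bookkeeping: one must check that the decomposition $X\setminus X^G=U_1\cup W$ and the relative cohomology groups behave so that Lemma \ref{product of two elements} truly applies at each peeling step, in particular that the classes and all their partial products remain in $h_G^*(X,X^G)=\widetilde h_G^*(X,X^G)$ (using the identity noted just before Lemma \ref{product of two elements}, valid because $\mathrm{pt}\subset X^G\subset X$), and that restriction from the relative groups to the absolute groups $h_G^*(W)$ and $h_G^*(V)$ is compatible with the inclusions. This is precisely the subtlety that separates the relative lemma from its absolute counterpart, and once it is settled the induction goes through verbatim.
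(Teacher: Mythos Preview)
Your proposal is correct and follows exactly the approach the paper indicates: the paper's proof consists of the single sentence ``By inductive application of the above lemma we obtain the following,'' and your argument is precisely that inductive application of Lemma~\ref{product of two elements}, spelled out in detail. You have in fact written more than the paper does, and your flagging of the relative bookkeeping (that the decomposition should be read as $X\setminus X^G=U_1\cup W$ and that one must track whether restricted classes live in absolute or relative groups at each stage) is apt, since the paper leaves this entirely implicit.
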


The above lemma implies the following relative version of Theorem \ref{thm:arithmetic}:
\begin{theorem}\label{thm: relative arithmetic}
Let  $u_1,\,\ldots,\, u_n\in  h_G^*(X, X^G)$ be cohomology classes whose product $ u_1\cdots u_n$ is non-
trivial. Then
$$ \sct_G(X,X^G)\geq  i_1 + 2\, i_2 + \, \cdots\, + n i_n + (n+1) $$
\end{theorem}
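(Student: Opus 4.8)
The plan is to mirror the proof of Theorem~\ref{thm:arithmetic}, performing the same peeling construction relative to the fixed-point set $X^G$ and substituting the relative lemmas proved above for their absolute counterparts. Since $\sct_G(X,X^G)=\sct(X\setminus X^G)$ by Definition~\ref{def:relative ct}, I would fix a regular good $G$-cover $\mathcal{U}=\{\tilde U_s\}$ of the fixed-point-free space $X\setminus X^G$ whose number of orbits equals $\sct_G(X,X^G)$, and let $K$ be its nerve. By Lemma~\ref{lem:relative ct} we may model $(X,X^G)$ by the $G$-complex pair $(K,K^G)$, so that the relative filtration degrees $|u_k|\geq i_k$ and the nontriviality of the product $u_1\cdots u_n\in h_{G,\,i_1+\cdots+i_n}^*(X,X^G)$ are detected on the skeletal filtration of this nerve.

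The induction is on $n$. For the inductive step, nontriviality of $u_1\cdots u_n$ in filtration degree $i_1+\cdots+i_n$ together with the equivariant Nerve theorem (Theorem~\ref{thm:G-nerve homotopy equivalence}) yields a $G$-invariant subfamily $\mathcal{U}'\subseteq\mathcal{U}$ comprising at least $i_1+\cdots+i_n+1$ orbits of sets with a common nonempty intersection. Writing $X\setminus X^G=U\cup V$ with $U$ the union of $\mathcal{U}'$ and $V$ the union of $\mathcal{U}\setminus\mathcal{U}'$, the Nerve theorem makes $U$ $G$-contractible, hence $G$-categorical, so Lemma~\ref{lem:relative subproducts} (with $k=1$) guarantees that the restricted product $i_V^*(u_2)\cdots i_V^*(u_n)$ is nontrivial in $h_G^*(V)$. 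As $V$ is a fixed-point-free $G$-space carrying an essential $(i_2,\ldots,i_n)$-product, the inductive hypothesis (equivalently Theorem~\ref{thm:arithmetic}, since $V^G=\emptyset$) provides at least $i_2+2i_3+\cdots+(n-1)i_n+n$ orbits covering $V$; combined with the $i_1+\cdots+i_n+1$ orbits of $\mathcal{U}'$ this gives
$$\sum_{j=1}^{n} i_j \;+\; \sum_{j=2}^{n}(j-1)\,i_j \;+\;(n+1)\;=\; i_1+2i_2+\cdots+n\,i_n+(n+1),$$
as asserted. It is essential here to discard the lowest-weight factor $u_1$ (keeping $u_2,\ldots,u_n$) so that the accumulated weights align with the claimed bound.

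The remaining ingredient, and the step I expect to be the main obstacle, is the base case $n=1$: a nontrivial class $u\in h_{G,i}^*(X,X^G)$ must force $\sct_G(X,X^G)\geq i+2$, the relative analogue of Lemma~\ref{lem:ct > i implies dim > i+2}. I would adapt that proof directly. From $|u|\geq i$ one obtains $\dim\bigl((X\setminus X^G)/G\bigr)\geq i$, whence $\sct_G(X,X^G)\geq i+1$ by Corollary~\ref{coro:ct(X^*) < ct_G(X)}, leaving only the exclusion of equality. If $\sct_G(X,X^G)=i+1$, the nerve of the good cover of $X\setminus X^G$ would be a single orbit $\tilde\sigma$ of an $i$-simplex; covering $\tilde\sigma$ by the invariant closed sets $\tilde V_j=G\{[t_0v_0,\ldots,t_iv_i]:t_j\leq\tfrac23\}$ exactly as in Lemma~\ref{lem:ct > i implies dim > i+2} makes every nonempty finite intersection $G$-contractible, and a Mayer--Vietoris computation then forces the reduced equivariant cohomology of the nerve to vanish in degree $\geq i$, contradicting $u\neq 0$. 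The genuinely delicate point is that $X\setminus X^G$ is noncompact, so the compact formulation of Lemma~\ref{lem:ct > i implies dim > i+2} is not directly available; I would circumvent this by running the entire Mayer--Vietoris argument on the locally finite $G$-complex $K\setminus K^G$ furnished by Lemma~\ref{lem:relative ct}, where the relative filtration degree is detected by the nerve exactly as in the absolute case.
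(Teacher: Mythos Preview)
Your proposal is correct and follows precisely the approach the paper intends: the paper gives no separate proof but simply presents Theorem~\ref{thm: relative arithmetic} as ``the relative correspondent of Theorem~\ref{thm:arithmetic}'', with the understanding that the induction carries over verbatim once Lemmas~\ref{product of two elements} and~\ref{lem:relative subproducts} replace their absolute counterparts and the base case is handled by the obvious relative analogue of Lemma~\ref{lem:ct > i implies dim > i+2}. Your care about which factor to discard in the inductive step and about the non-compactness of $X\setminus X^G$ in the base case in fact goes beyond what the paper spells out.
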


We are now ready to formulate a theorem which estimates $\ct_G$ or relative $\sct_G$ for an action of
$p$-tori on a $F_p$-cohomology sphere.

\begin{theorem}\label{thm:ct of spheres for Z_p^k}
Let $\bs^n$ be an $n$-dimensional manifold, homotopy equivalent to an  $F_p$-cohomology sphere, with an
action ot the group $G=\bz_p^k$, with $p$-prime and $k\geq 1$. Assume first that  $\bs^G=\emptyset$.
Depending on $p$ we have
$$ \begin{matrix}
\ct_G(\bs^n) \geq \frac{(n+1)(n+2)}{2} \;\; {\text{if}}\;\; p=2\,,\cr{\phantom{\text{second row}}}\cr
\ct_G(\bs^n) \geq \frac{(d)(d+1)}{2}\;\; {\text{if}}\;\; p>2, \;\;{\text{where}}\;\;d=\frac{n+1}{2}\;\;
\end{matrix}
$$
If $S^G \neq \emptyset$ then $S^G{\underset{F_p}{\,\sim\,}} \bs^r $ is a $F_p$ cohomology sphere of dimension
$r\geq 0$ and
$$ \begin{matrix}
\sct_G(\bs^n) \geq (r+2)+ \frac{(n-r-1)(n-r+2)}{2} \;\; {\text{if}}\;\; p=2\,, \cr{\phantom{\text{second row}}}
\cr
\ct_G(\bs^n) \geq (r+2) + \frac{(d-1) (d+1)}{2}\;\; {\text{if}}\;\; p>2, \;\;{\text{where}}\;\;d=\frac{n-r}{2}
\,.
\end{matrix}
$$
\end{theorem}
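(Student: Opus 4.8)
The plan is to take Borel equivariant cohomology $h^*_G(-)=H^*_G(-;F_p)$ as the cohomology theory and to split the argument according to whether $\bs^G$ is empty. The key conceptual point is that for $G=(\bz_p)^k$ the sharp bound should be extracted not from the bare arithmetic estimate of Theorem \ref{thm:arithmetic} (which, because no reduced degree-one classes survive once $k\ge 2$, only returns a weaker inequality) but from the genus estimate of Theorem \ref{thm:estimate by genus}, with the $G$-genus bounded below by the cohomological length index $\ell$ via Proposition \ref{prop: genus by length}. Since $(\bz_p)^k$ has incomparable subgroups for $k\ge 2$, I would first record that Theorem \ref{thm:estimate by genus} still applies: realizing $\ct_G$ by a \emph{strictly} regular good $G$-cover (available through the subdivision results of Section \ref{sec:Equivariant covering type}), condition R3 supplies, on each simplex of the nerve, the local nesting $G_{v_n}\subseteq\cdots\subseteq G_{v_0}$ of isotropy groups that drives the proof of Lemma \ref{nonempty itersection implies cat_G=1}, so that the linear-ordering hypothesis becomes unnecessary, exactly as flagged in the remark after Theorem \ref{thm:estimate by genus}. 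This yields $\ct_G(X)\ge \tfrac12\,\ell(X)\,(\ell(X)+1)$ for every $(\bz_p)^k$-space $X$.

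For $\bs^G=\emptyset$, the heart of the matter is the computation $\ell(\bs^n)=n+1$ when $p=2$ and $\ell(\bs^n)=d=\tfrac{n+1}{2}$ when $p$ is odd. I would run the Serre spectral sequence of the Borel fibration $\bs^n\hookrightarrow \bs^n_{hG}\to BG$: since the fibre is an $F_p$-cohomology $n$-sphere, the only possible differential is the transgression $d_{n+1}$ (respectively $d_{2d}$), which is multiplication by an Euler-type class $\tau$ in $H^{n+1}(BG;F_p)$ (respectively $H^{2d}(BG;F_p)$). The hypothesis $\bs^G=\emptyset$ forces $\tau\ne 0$ by Borel localization, so that $H^*_G(\bs^n)$ is, as a module over $H^*(BG;F_p)$, governed by the principal ideal $(\tau)$. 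Reading off $\ell$ relative to the family $\mathcal A_X$ of orbit types then reduces to an elementary computation in $H^*(BG;F_p)=F_2[t_1,\dots,t_k]$ (respectively $\Lambda(s_i)\otimes F_p[t_i]$): one must express $\tau$, up to the annihilator, as a product of classes lying in the kernels $\ker\big(H^*(BG)\to H^*(BH)\big)$ of restriction to isotropy orbits, and the minimal number of factors is $n+1$ for $p=2$ and $d$ for $p$ odd, the factor of two reflecting that only the degree-two generators $t_i$ contribute when $p$ is odd. Feeding $\gamma_G(\bs^n)\ge\ell(\bs^n)$ into the extended Theorem \ref{thm:estimate by genus} gives $\ct_G(\bs^n)\ge\tfrac{(n+1)(n+2)}{2}$ and $\tfrac{d(d+1)}{2}$ respectively; this parallels the representation-sphere computation of Theorem \ref{thm:estimate of l_n}, which however must be redone for the elementary abelian group.

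For $\bs^G\ne\emptyset$, Smith theory for $p$-tori shows that $\bs^G$ is itself an $F_p$-cohomology sphere $\bs^r$, $r\ge 0$. I would then invoke Lemma \ref{lem:relative ct}, which isolates the subfamily of the cover meeting $\bs^G$ and gives $\ct_G(\bs^n)\ge \ct(\bs^G)+\sct_G(\bs^n,\bs^G)$; since $\ct(\bs^r)=r+2$, this produces the first summand. The remaining term $\sct_G(\bs^n,\bs^G)=\sct(\bs^n\setminus\bs^G)$ concerns the fixed-point-free complement, whose equivariant homotopy type is that of the unit normal sphere bundle, with fibre $\bs^{\,n-r-1}$ carrying a fixed-point-free fibrewise $G$-action. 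Applying the relative machinery of Theorem \ref{thm: relative arithmetic}, together with the relative length index computed from the module $H^*_G(\bs^n,\bs^G)$ exactly as in the absolute case, should yield the relative estimates $\tfrac{(n-r-1)(n-r+2)}{2}$ for $p=2$ and $\tfrac{(d-1)(d+1)}{2}$ for $p$ odd, and adding the two summands gives the stated inequalities.

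The step I expect to be the main obstacle is the precise length-index bookkeeping in Borel cohomology together with its relative refinement: one must verify that $\bs^G=\emptyset$ genuinely forces $\tau\ne 0$ and that counting factors in $F_p[t_1,\dots,t_k]$ returns exactly $n+1$ (respectively $d$) rather than an off-by-one or off-by-factor-two value, since this is where the distinction between the even and odd primes, and the appearance of $d=\tfrac{n+1}{2}$, are decided. The relative constant $\tfrac{(n-r-1)(n-r+2)}{2}$ is the most delicate point, as it lies strictly between the values obtained from degree-one classes alone and requires the full strength of the relative length index of the normal sphere. A secondary, but essential, technical step is the justification via strictly regular covers that the quadratic genus estimate of Theorem \ref{thm:estimate by genus} survives incomparable isotropy groups; without it the clean bound would be unavailable for $k\ge 2$.
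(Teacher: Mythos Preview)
Your route diverges from the paper's in a basic way: the paper does not go through the $G$-genus at all but applies the arithmetic estimate of Theorem~\ref{thm:arithmetic} (and its relative version, Theorem~\ref{thm: relative arithmetic}) directly to Borel cohomology. The tom Dieck/Borel description of $H^*_G(\bs^n;F_p)$, quoted from \cite{Mat-San-Sil}, expresses the Euler class $e(\bs)$ as a product $w_1^{k_1}\cdots w_t^{k_t}$ of classes pulled back from $H^1(BG;F_2)$ (for $p=2$) or $s_1^{k_1}\cdots s_t^{k_t}$ with $s_i$ coming from $H^2(BG;F_p)$ (for $p$ odd). The paper then argues that these classes have filtration degree $|w_i|\ge 1$ (resp.\ $|s_i|\ge 2$), so one obtains an essential $(1,\ldots,1)$-product of length $n$ (resp.\ a $(2,\ldots,2)$-product of length $\tfrac{n-1}{2}$), and Theorem~\ref{thm:arithmetic} returns the stated bounds directly. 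Your assertion that Theorem~\ref{thm:arithmetic} ``only returns a weaker inequality'' therefore misreads the paper's strategy; it is precisely the tool used.

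Your alternative via Theorem~\ref{thm:estimate by genus} has a genuine gap. That theorem requires the orbit types of $X$ to be linearly ordered, which fails for $G=(\bz_p)^k$ once $k\ge 2$. You propose to bypass this by realising $\ct_G$ through a \emph{strictly} regular good $G$-cover and invoking the remark following Theorem~\ref{thm:estimate by genus}. But that remark only says that Lemma~\ref{nonempty itersection implies cat_G=1} holds for strictly regular covers; it does not assert that a good $G$-cover of minimal cardinality can be taken strictly regular. The invariant $\ct_G$ is defined via \emph{regular} covers, and the mechanism that upgrades regularity to strict regularity---barycentric subdivision---increases the number of orbits. So you cannot, without further argument, assume that the cover realising $\ct_G(X)$ satisfies R3, and hence the quadratic bound $\ct_G\ge\tfrac12\gamma_G(\gamma_G+1)$ is not available for $k\ge 2$ by the reasoning you give. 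The same defect recurs in your treatment of the relative case. The paper avoids this entirely because Theorem~\ref{thm:arithmetic} carries no hypothesis on orbit types.
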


\begin{proof}
The proof is based on the Borel theorem (see \cite[Capt. IV]{Hsiang} for  an exposition). But for us a more
convenient source of material is \cite{Mat-San-Sil} where an approach to the Borel theorem given by
T. tom Dieck in \cite{Dieck} is  adapted to the problems discussed here.

The first part of the Borel theorem, which recovers the Smith theorem,  states that for any action of the group
$G=\bz_p^k$, or even more general for a finite $p$-group, we have that the fixed point set
$ \bs^G$ is $F_p$-cohomology equivalent to $\bs^r$, where $-1\leq r \leq n$, i.e.,  it is an $F_p$-cohomology
sphere of dimension $r\geq 0$, or it is empty if $r=-1$.

Let $\mathcal{H}+\mathcal{H}(\bs)$, denote the family  $\{H_1, H_2, \, \dots\,, H_t\}$, $1\leq t\leq p^{k-1}$
of sub-tori $H \subset G$  of rank $k-1$, such that $\bs^{H_i}\neq \emptyset$, thus $\bs^{H_i}\sim \bs^{n_i}$,
$n_i\geq 0$. The second part of Borel theorem states that
$$  n-r\,= \sum_{i=1}^t \, (n_i -r) \,.$$

Moreover, the theorem gives an information about the cohomology product in the Borel cohomology defined as
$H^*_G(\bs;F_p) = H^*(EG{\underset{G}\times} X;F_p)$, where the latter is understood as the limit over
filtration of the classifying space $EG$ by its skeletons.

To show the first part of theorem, assume that $\bs^G=\emptyset$, i.e. $r=-1$
Furthermore,  the Borel theorem says that for an action  with $\bs^G = \emptyset$ we have
$ H^*_G(\bs;F_p) = H^*(BG)/e(\bs)$ and $e(\bs)$ has the following description (cf. \cite[Theorem 3.1 and
Remark 3.2]{Mat-San-Sil}).  Namely, there exist elements $w_1,\,w_2, \, \dots\,, w_t \in H^*_G(\bs;F_2)$
such that $ e(\bs) =  w_1^{k_1} \cdot w_2^{k_2}\cdots w_t^{k_t} $ if $p=2$,  with
$k_i = n_i-r$, or correspondingly  elements  $ s_1,\,s_2, \, \dots\,, s_t \in H^*_G(\bs;F_p)$  such that
$ e(\bs)= s_1^{k_1} \cdot s_2^{k_2}\cdots s_t^{k_t} \neq 0$ if $p>2$,  with
$k_i = \frac{n_i-r}{2}$ respectively.
This means that the length of non-zero multiple of elements $w_1,\,w_2, \, \dots\,, w_t$, or
$s_1,\,s_2, \, \dots\,, s_t$ correspondingly is equal to $n+1 -1 = n$, and
$\frac{n+1}{2} -1 = \frac{n-1}{2}$ respectively, since $r=-1$.

In the case $r\geq 0$, i.e. if $(\bs^n)^G\neq \emptyset$ we have similarly
$ H^*_G(\bs, \bs^G;F_p) = H^*(BG)/e(\bs)$ with
$ e(\bs) =  w_1^{k_1} \cdot w_2^{k_2}\cdots w_t^{k_t}\neq 0 $ if $p=2$, but
$ e(\bs)= s_1^{k_1} \cdot s_2^{k_2} \cdots s_t^{k_t} + \mathfrak{n} \neq 0$ if $p>2$,
where $\mathfrak{n}$ is nilpotent (cf. \cite[Theorem 3.1]{Mat-San-Sil}).  This means that the length of
non-zero    multiple of elements $w_1,\,w_2, \, \dots\,, w_t$, or $s_1,\,s_2, \, \dots\,, s_t$
correspondingly is equal to $n-r -1 $, or $\frac{n-r}{2} - 1 = \frac{n-r-2}{2}$ respectively.

In order to apply Theorem \ref{thm:arithmetic} we have to show that the degree  $|w_i| \geq 1$, and
$|s_i| \geq 2$ respectively. But, it follows from the description of elements $w_1,\,w_2, \, \dots\,, w_t$,
and  $s_1,\,s_2, \, \dots\,, s_t$ respectively. Remind that $H_G^*(*; F_p)= H^*(BG;F_p)$ and
$H^*(B\bz_2^k; F_2)= F_2[\omega_1, \, \omega_2,\dots\,, \omega_k]$, is a polynomial ring,  and
$H^*(B\bz_p^k; F_p)= F_p[\gamma_1, \, \gamma_2\dots\,, \gamma_k] \otimes \wedge(\alpha_1\cdot\alpha_2\,
\cdots\, \cdot \alpha_k)$ is the tensor product of polynomial ring $R$ and the skew-symmetric algebra
respectively. Furthermore $w_i \in \ker (H^1(BG;F_p): {\overset{\iota^*}{\,\to\,}} H^1(BH_i; F_p)) $ induced
by the map $\iota:BH_i \to BG$, and in the case of  $p$-odd  $s_i \in  R\cup \ker (H^2(BG;F_p):
{\overset{p^*}{\,\to\,}} H^2(BH_i; F_p)) $. In other words $w_1,\,w_2, \, \dots\,, w_t$ and
$s_1,\,s_2, \, \dots\,, s_t$ are images of elements described above by the homomorphism
$p^*: H_G^*(X;F_p) \to  H^*_G(pt) = H^*(BG;F_p)$ induced by the $G$-map $p: EG\times X \to EG$ on the
orbit spaces. But in the singular cohomology theory  $H^*(BG; F_p)$ the degree of an element is equal to
its gradation, which shows that $\omega_i$ are of degree $1$, and $\gamma_i$ of degree $2$. Consequently
their images and their linear combinations are of the same degree which shows that $|w_i| =1 $, and
respectively  $|s_i|=2$.

To complete the proof in the case where $\bs^G=\emptyset$,  it is enough to apply Theorem
\ref{thm:arithmetic} with the rank of summation from $1$ to $n$ and with the degree of each factor equal to
$1$ if $p=2$,  or with the rank of summation from $1$ to $\frac{n-1}{2}$ and degree of each factor equal
to $2$ if $p>2$.

In the case when $\bs^G \neq \emptyset$,  we have
$\ct_G(X) \geq \ct(X^G) + \sct_G(X,X^G) = \ct(X^G) + \sct_G(X\setminus X^G)$ by   Lemma \ref{lem:relative ct}.
Since $\bs^G {\underset{F_p}{\,\sim \,}} \bs^r$ is the $F_p$ cohomology sphere of dimension $r$, we have
$\ct(\bs^G) = r+2$. To estimate the  second term we use Theorem \ref{thm: relative arithmetic} and the above.
\end{proof}

\end{document}